\documentclass{article}
\usepackage{graphicx} % Required for inserting images
\input{macro.tex}

\DeclareMathAlphabet{\mathpzc}{OT1}{pzc}{m}{it}
\usepackage{tikz-cd}

\title{Power Operations in Morava E-Theory of Flat Ring Spectra}
\author{Yuval Lotenberg}

\begin{document}

\maketitle

\begin{abstract}

    Let $E_n$ be Morava $E$-theory of height $n$. Let $R$ be a $p$-adically flat commutative ring spectrum. Then the Tate-valued Frobenius map endows $\pi_0 R$ with the structure of a $\delta$-ring.  On the other hand, we may form the $K(n)$-completed tensor product $L_{K(n)}(R \otimes E_n)$, which is a $K(n)$-local $E_n$-algebra. Then $\pi_0(L_{K(n)}(R \otimes E_n)) = LT_n \cotimes \pi_0 R$ admits the structure of an algebra over the monad $\TT(n)$ defined by Rezk. The $\TT(n)$-algebra structure encodes the power operations of $L_{K(n)}(R \otimes E_n)$.
    
    In this paper we describe the $\TT(n)$-algebra structure on $\pi_0(L_{K(n)}(R \otimes E_n))$.
\end{abstract}

\setcounter{tocdepth}{1}
\tableofcontents

\section{Introduction} \label{section:intro}

\subsection{Background and Main Results}

Let $p$ be a fixed prime. A connective $p$-complete spectrum $R$ is called $p$-adically flat if its $\FF_p$-homology, $\FF_p \otimes R$, is discrete (i.e., concentrated at homotopical degree $0$). 

If $R$ is $p$-adically flat, then $\pi_0R$ is endowed with a natural $\delta$-ring structure.
This arises from the Tate-valued Frobenius map $\phi\colon R \to R^{tC_p}$: composing $\phi$ with the inverse of the trivial map $R \to R^{tC_p}$, which is an equivalence for $p$-adically flat spectra, induces an endomorphism $\pi_0R \to \pi_0R$, which is a lift of the Frobenius morphism. Since $\pi_0 R$ is $p$-torsion free, lifts of Frobenius are in bijection with $\delta$-ring structures. 
(see \cref{subsection:delta-rings} for details). 
\\\\
Let $E:=E(n)$ denote Morava $E$-theory of height $n$, and let $K(n)$ be Morava $K$-theory of height $n$. If $R$ is a $K(n)$-local $E$-algebra, then it admits power operations relative to $E$. These operations have been extensively studied (see, for example, \cite{Rezk09}, \cite{BF15}, \cite{Nissan25}).
In particular, Rezk defines a monad $\TT$ for which $\pi_0(R)$ is naturally a $\TT$-algebra. $\TT$-algebras have an underlying $E_0$-algebra, which in the case of a $K(n)$-local $E(n)$-algebra $R$ is $\pi_0 R$. 

Moreover, Rezk constructs a functor from the category of $\TT$-algebras to sheaves on a certain deformation category (see \ref{subsection:formal-groups}), and identifies its essential image when restricted to $p$-torsion-free $\TT$-algebras. Explicitly, this is a subcategory $\Sh(\Def, \Alg)_{\text{cong}}$ of sheaves satisfying a certain congruence criterion. 
\\ \\
If $R$ is a $p$-adically flat ring spectrum, we may form the completed tensor product $L_{K(n)}(E \otimes R) = E \cotimes R$, which is a $K(n)$-local $E$-algebra. Consequently, $\pi_0(E \cotimes R)$ carries a natural $\TT$-algebra structure.

These constructions fit into the following diagram:
\begin{equation} \label{diag:dotted-functor}
\begin{tikzcd}[row sep=large, column sep = large] 
    \CAlg_{/\pcs}^{\text{flat}} \arrow[r,  "L_{K(n)}(- \otimes E(n))"] \arrow[d, "\pi_0"] &[4em] \CAlg_{E(n)}^{K(n),\text{even}} \arrow[d, "\pi_0"] \\ 
    \Alg_{\delta, \text{tf},p\text{-complete}} \arrow[r, dashed, "?"] & \Alg_{\TT,\text{tf}}
\end{tikzcd}
\end{equation}

where \emph{tf} means torsion-free.
A natural question that arises from the digagram above is whether the dotted arrow exists, namely, whether the $\TT$-algebra structure of $\pi_0(R \cotimes E)$ can be recovered from the $\delta$-ring structure of the $\pi_0 R$, where $R$ is a $p$-adically flat ring spectrum.
If so, it is desirable to find an explicit algebraic description of this missing functor from $\delta$-rings to $\TT$-algebras.

\begin{example}
In the case of height $n = 1$, taking $\Gamma = \GG_m$ the multiplicative formal group, $E(1)$ identifies with $p$-completed complex $K$-theory. In this case, $\TT$-algebras are equivalent to $\delta$-rings.

Explicitly, recall that $\TT$-algebras correspond to sheaves on the deformation category, satisfying a certain congruence condition.
For any ring $R$ deforming $\FF_p$, there is a unique (up to isomorphism) deformation of $\GG_m$ to $R$, namely $\GG_m$ over $R$. There is also a unique deformation of $\Frob$ between any two deformations. Therefore, the data of a sheaf $B$ on the deformation category, reduces to a a (coherent) choice of $R$-algebra $A_R = B_R(\GG_m)$ for every ring $R$, in addition to an endomorphism $B_R(\Frob)=\psi : A \to A$. By coherence, $A_R = R \cotimes A_{\ZZ_p}$, thus it is enough to understand the case of $A = A_{\ZZ_p}$. When we substitute $R = \FF_p$, we see that the congruence condition requires that $\psi / p =: A/ p \to A/p$ is the Frobenius map, i.e., $\psi$ is a lift of the Frobenius. Finally, in the torsion free case, lifts of Frobenius are in bijection with $\delta$-ring structures.

This suggests that the missing dotted functor may simply be the identity functor.
\end{example}

In this paper, we give an explicit algebraic description of the functor $\Alg_{\delta, \text{tf},p\text{-complete}} \to \Alg_{\TT,\text{tf}}$, using Rezk’s theory identifying $\Alg_{\TT, \text{tf}}$ with sheaves on the deformation category. This is given in \cref{section:construction}. In particular, we obtain

\begin{thm}
    Let $(R,\delta)$ be a $p$-complete torsion-free $\delta$-ring, with associated lift of Frobenius $\psi(x) = x^p+ p\delta(x)$ for $x \in R$. 
    Let $B \in \Sh(\Def, \Alg)$ be image of $R$ under the composition of functors 
    \[
    \Alg_{\delta, \text{tf},p\text{-complete}} \to \Alg_{\TT,\text{tf}} \xrightarrow[]{\sim} \Sh(\Def, \Alg)_{\text{tf,cong}}
    \]
   Then for a deformation $G$ over $S$, we have $B_S(G) = S \cotimes R$ (completed with respect to the maximal ideal of $S$), and to a morphism $f : G \to G'$ deforming $\Frob^r$, the sheaf assigns $B_S(f) = \Id_{S} \otimes \psi^r$. 

   On maps of $\delta$-rings, the functor just gives completed tensor product with $E_0$ -- maps of $\TT$-algebras are just maps of underlying rings. 
\end{thm}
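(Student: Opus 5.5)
The plan is to compute the $\TT$-algebra structure on $A:=\pi_0(E\cotimes R)$ directly from the spectrum-level construction, and then push it through Rezk's equivalence $\Alg_{\TT,\text{tf}}\simeq\Sh(\Def,\Alg)_{\text{tf,cong}}$. The computation has five steps.

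\textbf{Step 1: identify the underlying ring.} Since $R$ is $p$-adically flat, $E\otimes R$ is even and flat over $E$. After $K(n)$-localization, which here amounts to completing at the maximal ideal of $E_0$, we get $\pi_0(E\cotimes R)=E_0\cotimes\pi_0R$. This is torsion-free, so by Rezk's theorem the $\TT$-structure is determined by the associated sheaf $B$. Its value on the universal deformation over $E_0$ is $E_0\cotimes R$. By the coherence built into $\Sh(\Def,\Alg)$, its value on a deformation $G$ over $S$, classified by $E_0\to S$, is the base change $B_S(G)=S\cotimes_{E_0}(E_0\cotimes R)=S\cotimes R$. This gives the first assertion.

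\textbf{Step 2: reduce to the Frobenius.} Every deformation of $\Frob^r$ factors, up to isomorphisms of deformations, as a composite of $r$ deformations of $\Frob$. Isomorphisms act by the identity on the $R$-factor, since they come from the $E_0$-linear structure (automorphisms and base change). So it suffices to show that for each deformation $f$ of $\Frob$ with subgroup $H$ of order $p$, the map $B_S(f)$ equals $\Id_S\otimes\psi$ on elements $1\otimes x$.

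\textbf{Step 3: factor the total power operation.} The total power operation $P_p\colon \pi_0(E\cotimes R)\to E^0(B\Sigma_p)\cotimes_{E_0}\pi_0(E\cotimes R)$ is multiplicative. On $1\otimes x$ it is induced by the $E_\infty$-structure of $E\otimes R$. This structure is the tensor product of the structures on $E$ and on $R$. Hence on $1\otimes x$ it factors through the $p$-th power map of $R$, namely $R\to R^{h\Sigma_p}$ (equivalently $R^{hC_p}$ after $p$-localization), followed by the map $R^{hC_p}\to E^{hC_p}\otimes R$-type classes.

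\textbf{Step 4: pass to the Tate construction and evaluate (main obstacle).} Rezk's sheaf value at $f$ is obtained by quotienting $E^0(B\Sigma_p)$ by the transfer ideal and specializing along the ring classifying $H$. Quotienting by the transfer ideal means we may replace $R^{hC_p}$ by $R^{tC_p}$, so on $1\otimes x$ the operation is given by the Tate-valued Frobenius $\phi(x)\in\pi_0R^{tC_p}$. Since $R$ is $p$-adically flat, $R\to R^{tC_p}$ is an equivalence, and $\phi(x)$ corresponds to $\psi(x)=x^p+p\delta(x)$. The delicate point is a compatibility check. The identification of $\pi_0R^{tC_p}$ with $\pi_0R$ via the trivial-action map must match the $E$-side specialization $E^0(B\Sigma_p)/I_{tr}\to S$, so that the constant classes go to the constants in $S$ and no extra unit or Euler-class twist enters. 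I would first verify this on $R=\mathbb S_p$, where everything is $\Id_S$ and $\psi=\Id$. Then I would deduce the general case from naturality in $R$ together with the $\mathbb S_p$-linearity of the Tate diagonal.

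\textbf{Step 5: conclude on generators, morphisms, and congruence.} Having $B_S(f)(1\otimes x)=1\otimes\psi(x)$, we get $B_S(f)(s\otimes x)$ by semilinearity, since $B_S(f)$ is an $S$-algebra map along $S$ in Rezk's convention. Continuity then gives $B_S(f)=\Id_S\otimes\psi$. The congruence condition holds because $\psi$ lifts Frobenius, which also confirms consistency with Rezk's criterion. Finally, for the statement on morphisms: a map of $\delta$-rings $R\to R'$ commutes with $\psi$, so $\Id\otimes g$ is a map of sheaves. By the full faithfulness in Rezk's equivalence on the torsion-free part, $\TT$-maps are exactly the ring maps compatible with the sheaf structure.
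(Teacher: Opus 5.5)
\textbf{Main gap: the proposal proves a different statement.} The statement is about an \emph{arbitrary} $p$-complete torsion-free $\delta$-ring. The functor $\Alg_{\delta,\text{tf},p\text{-complete}}\to\Alg_{\TT,\text{tf}}$ in it is the one the paper defines algebraically via $A\mapsto B^A$. The topological square only constrains a functor on the image of $\pi_0$, so it cannot be what defines this functor.

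Your Steps 1--4, however, treat $R$ as a $p$-adically flat ring spectrum and compute power operations of $E\cotimes R$. Nothing in the hypotheses supplies an $E_\infty$-ring realizing $(R,\delta)$, nor $E_\infty$-maps realizing $\delta$-ring maps. So this argument has no input in the stated generality; at best it addresses the subsequent spectral theorem.

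What the statement actually requires is a direct check that the formula defines an object of $\Sh(\Def,\Alg)_{\text{tf,cong}}$, namely:
\begin{itemize}
\item $B_S(G)=R\cotimes S$ and $B_S(f)=\psi^r\cotimes\Id_S$ define a functor on $\Def_S^{\op}$: a composite of deformations of $\Frob^r$ and $\Frob^{r'}$ deforms $\Frob^{r+r'}$, and $\psi^{r+r'}=\psi^{r'}\circ\psi^r$;
\item the base-change isomorphisms $(R\cotimes S)\cotimes[S] S'\cong R\cotimes S'$ satisfy the coherence conditions;
\item a $\delta$-map $g$ gives a coherent map of sheaves $g\cotimes\Id$;
\item the Frobenius congruence holds because $\psi(a)\equiv a^p \bmod p$.
\end{itemize}
Rezk's identification of torsion-free $\TT$-algebras with torsion-free congruence sheaves then produces the $\TT$-algebra, and the theorem holds by construction. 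Only your Step 5 (congruence and morphisms) belongs to this argument.

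\textbf{Further gaps, even for the spectral theorem.} Step 2 fails as stated. Factoring a deformation $f$ of $\Frob^r$ over $S$ into $r$ deformations of $\Frob$ requires a flag of subgroups of $\ker f$ defined over $S$, and such a flag usually does not exist. For example, at height $n\ge 2$, $\ker f=\GG[p]$ over $E_0$ has no subgroup of order $p$ defined over $E_0$. You would need to work universally over $L[r]$ and prove that $L[r]$ injects into the ring classifying chains of $r$ Frobenius deformations, which is a genuinely nontrivial input. The paper avoids this differently: it restricts to transitive abelian $A\subseteq\Sigma_{p^m}$ using Strickland's injectivity, then injects $\bigl((R\cotimes E)^0(BA)/(I_{tr})\bigr)^{\free}$ into $\pi_0((R\cotimes E)^{\tau A})$, and finally identifies $\phi^A$ with the $m$-fold Frobenius.

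In Step 4, ``replacing $R^{hC_p}$ by $R^{tC_p}$'' needs the kernel of $\pi_0R^{hC_p}\to\pi_0R^{tC_p}$ to be the transfer image. This is true there, because $R$ is connective and so $\pi_0R_{hC_p}=\pi_0R$, but you do not say it. Your proposed reduction to $R=\pcs$ by naturality cannot reach a general class $x$, since maps out of $\pcs$ only see the unit.
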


And in particular,

\begin{thm}
    Let $R$ be a $p$-complete, $p$-adically flat commutative ring spectrum, and let $\psi :\pi_0R \to \pi_0R$ be the lift of Frobenius associated to its $\delta$-ring structure.
    
    Let $B \in \Sh(\Def, \Alg)$ denote the sheaf associated to the $\TT$-algebra structure on $L_{K(n)}(R \otimes E)$. This sheaf assigns to a deformation $G$ over $S$ the ring $B_S(G)=S \cotimes \pi_0R$. To a morphism $f\colon G \to G'$ deforming $\Frob^r$, the sheaf gives $B_S(f)=\Id_S \otimes \psi^r$.
\end{thm}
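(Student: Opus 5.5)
The plan is to identify the $\TT$-algebra structure on $\pi_0(E \cotimes R)$ through its underlying ring and a small amount of power-operation data, and then to appeal to the previous theorem for the explicit sheaf. There are three steps.

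\textbf{Step 1: the underlying ring.} Since $R$ is $p$-adically flat and $p$-complete, $E \otimes R$ is flat over $E$ modulo $p$. One checks that $\pi_*(E \cotimes R) \cong E_* \cotimes \pi_0 R$ is even and $p$-torsion free. A convenient route is to reduce mod $(p,u_1,\dots,u_{n-1})$ using $\FF_p \otimes R$ discrete, and then lift by completeness. So $\pi_0 = E_0 \cotimes \pi_0R$ lies in $\Alg_{\TT,\text{tf}}$. By Rezk's equivalence, the sheaf $B$ is then determined by the total power operation
\[
\psi^r_{\mathrm{tot}}\colon \pi_0 \to \pi_0 \otimes_{E_0} E^0(B\Sigma_{p^r})/I_{\mathrm{tr}},
\]
equivalently by the values $B_S(f)$ on Frobenius deformations. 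Since $B_S(f)$ is a ring map which is $S$-linear in the appropriate twisted sense, it suffices to compute it on elements $1 \otimes x$ with $x \in \pi_0 R$. By sheaf coherence ($B_S(G) = S \cotimes B_{E_0}(\Gamma)$) and by composing Frobenii, it suffices to treat $r=1$ on the universal deformation.

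\textbf{Step 2: the key computation.} We must show that the $E$-power operation $P\colon \pi_0 R \to \pi_0(E\cotimes R)\otimes_{E_0} E^0(B\Sigma_p)/I$ sends $x$ to $\psi(x)$, viewed as a constant. By naturality in $R$, the $E$-linear power operation on $E\cotimes R$ restricted along $R \to E \cotimes R$ factors through the $\mathbb{S}$-linear total power operation $R \to R^{B\Sigma_p}$, followed by $E$-homology. Modulo transfers, the $\mathbb{S}$-power operation becomes the Tate-valued Frobenius $R \to R^{tC_p}$ after restricting to $C_p$; the $\Sigma_p$ versus $C_p$ difference is harmless after $p$-completion since the index is prime to $p$. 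Now compose with the canonical equivalence $R \simeq R^{tC_p}$ given by flatness. The Frobenius then becomes $\psi$ times the image of the trivial map, and the trivial map goes to the constant inclusion $E_0 \cotimes \pi_0R \to (E\cotimes R)^0(B\Sigma_p)/I$. This yields $B(\Frob)(1\otimes x) = 1 \otimes \psi(x)$.

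\textbf{Step 3: conclusion.} Extending $S$-linearly gives $B_S(f) = \Id_S \otimes \psi^r$. This agrees with the image of $(\pi_0R,\delta)$ under the functor of the previous theorem, so the two sheaves agree by Rezk's equivalence.

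\textbf{Main obstacle.} Step 2 is the hard part. One must compare the quotient of $E^0(B\Sigma_p)$ by the transfer ideal with the Tate construction, namely show that $E \otimes R^{tC_p}$ and $(E\cotimes R)^{tC_p}$ relate compatibly with the transfer quotient. My first attempt would be to use the cofiber sequence $R_{hC_p}\to R^{hC_p}\to R^{tC_p}$, in which the image of the norm is the transfer ideal. I would then verify the compatibility on a free-ish test case: since $p$-adically flat spectra are built from $\mathbb{S}_p$-modules with flat $\pi_0$, I would take $R = \mathbb{S}_p$ or a polynomial $\mathbb{S}_p[x]$ completion, check the formula there, and extend by naturality.
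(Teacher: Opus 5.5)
\paragraph{Verdict.} Your route differs from the paper's and its computational core, Step~2 for $r=1$, is sound. The gap is the reduction to $r=1$ in Step~1, which is not justified as written, and this is exactly where the paper's difficulty lives.

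\paragraph{The reduction to $r=1$.} ``By composing Frobenii'' presumes that deformations of $\Frob^r$ are composites of $r$ deformations of $\Frob$. Over a general base they are not. Already for $r=2$, $n\ge 2$, the universal subgroup of order $p^2$ over $L[2]=E^0(B\Sigma_{p^2})/I_{tr}$ has no subgroup of order $p$ defined over $L[2]$: generically it can be $(\ZZ/p)^2$ with no Galois-stable line. So the universal deformation of $\Frob^2$ does not factor over $L[2]$.

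What you actually need is twofold. First, it factors after base change along the comultiplication $L[r]\to L[1]\otimes_{E_0}\cdots\otimes_{E_0}L[1]$, using the target structure on each left factor and the source structure on each right factor. Second, this map is injective, so it stays injective after the flat base change to $\pi_0R\cotimes E_0$. Functoriality of $B$ on $\Def^{\op}$ then gives $\Id\otimes\psi^r$ over the flag ring, and injectivity pulls this back to $L[r]$.

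This injectivity is a real theorem and must be cited or proved. It follows from Rezk's result that $\Gamma$ is Koszul, in particular generated by $\Gamma[1]$; dually, the comultiplication is then split injective. Alternatively, it can be checked on the generic fiber, where every subgroup of order $p^r$ of $(\ZZ/p^r)^n$ has a composition series.

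\paragraph{Comparison with the paper.} With this input supplied, your proof closes. That single input replaces three pieces of the paper's argument:
\begin{itemize}
\item the detection of $L[m]$ on transitive abelian $A\subseteq\Sigma_{p^m}$ (\cref{thm:injective-to-abelian});
\item the injectivity of $\overline{G}$ via $\Mon(A,\GG)\to\Level(A,\GG)$ (\cref{thm:injection-to-tate});
\item the identification of $\phi^A$ with the $m$-fold Frobenius via Yuan's span functoriality (\cref{prop:composition-of-frobeni}).
\end{itemize}
In exchange, the paper needs no structural input about $\Gamma$ and treats all $m$ uniformly.

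\paragraph{Step~2.} The obstacle you anticipate disappears if you do the Tate comparison on $R$ rather than on $E\cotimes R$.
\begin{itemize}
\item Since $R$ is connective with trivial action, $\pi_0(R_{hC_p})=\pi_0R$. Exactness of $\pi_0R_{hC_p}\to\pi_0R^{hC_p}\to\pi_0R^{tC_p}$, together with $\pi_0\phi=\can\circ\psi$ and the factorization of $\pi_0\phi$ through $P_{C_p}$, gives $P_{C_p}(x)-\mathrm{triv}(\psi(x))\in\mathrm{im}(\mathrm{tr}^{C_p}_e)$.
\item Transfers are natural in the coefficients, so this congruence pushes forward along $R\to E\cotimes R$.
\item Restriction $(E\cotimes R)^0(B\Sigma_p)/I_{tr}\to(E\cotimes R)^0(BC_p)/I_{tr}$ is injective. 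This holds because $\mathrm{tr}^{\Sigma_p}_{C_p}\circ\mathrm{res}^{\Sigma_p}_{C_p}=(p-1)!$ is a unit, and $C_p$ acts freely on each $\Sigma_p/(\Sigma_i\times\Sigma_{p-i})$ for $0<i<p$.
\end{itemize}
No comparison of $E\otimes R^{tC_p}$ with $(E\cotimes R)^{tC_p}$ is needed.

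Your test-case fallback would not have worked anyway. A general $x\in\pi_0R$ is not the image of a generator under an $\EE_\infty$ map from $\pcs$ or from a $p$-adically flat spherical polynomial ring.

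Finally, passing from the values on $x\otimes 1$ to all of $\pi_0R\cotimes E_0$ uses three facts about the reduced operation: it is a ring map, it is $t$-semilinear (by naturality along $E\to E\cotimes R$), and it is $\frakm$-adically continuous.
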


Moreover, in height 1, for a $p$-complete $\delta$-ring $R$, the associated sheaf $B$ assigns $B_{\ZZ_p}(G)=R$, and to the unique deformation of Frobenius, it assigns the lift of Frobenius coming from the $\delta$-ring structure of $R$. Hence we get the following corollary:

\begin{cor}
    In height $1$, the missing dotted functor from diagram~(\ref{diag:dotted-functor}) is the identity.
\end{cor}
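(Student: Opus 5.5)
We deduce the corollary from the preceding theorem together with the height-one analysis of the introductory example. The goal is to show two things. First, under the identification $\Alg_{\TT(1),\text{tf}} \simeq \Alg_{\delta,\text{tf},p\text{-complete}}$, the composite functor $\Alg_{\delta,\text{tf},p\text{-complete}} \to \Alg_{\TT,\text{tf}}$ sends $(R,\delta)$ to itself. Second, it sends a map of $\delta$-rings to the same map.

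\textbf{Step 1: the height-one deformation category.} Take $\Gamma=\GG_m$ over $\FF_p$, so that $E_0=\ZZ_p$. For every complete local ring $S$ deforming $\FF_p$, there is up to unique isomorphism exactly one deformation, $\GG_m$ over $S$. Moreover, each $\Frob^r$ has a unique deformation, namely the $p^r$-power map $[p^r]$. Hence a sheaf $B \in \Sh(\Def,\Alg)$ is determined, coherently in $S$, by two pieces of data: the $\ZZ_p$-algebra $A=B_{\ZZ_p}(\GG_m)$ and the endomorphism $\psi_B=B_{\ZZ_p}([p])$. Base change gives $B_S(\GG_m)=S\cotimes A$, and the congruence condition at $S=\FF_p$ says that $\psi_B$ lifts Frobenius on $A/p$. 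In the torsion-free case this gives the equivalence $\Sh(\Def,\Alg)_{\text{tf,cong}}\simeq \Alg_{\delta,\text{tf},p\text{-complete}}$, given by $B\mapsto (A,\psi_B)$. Here $p$-completeness holds because $B_S(G)$ is complete by definition.

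\textbf{Step 2: compose with the theorem.} Apply the preceding theorem with $S=\ZZ_p$ and $G=\GG_m$. This gives $B_{\ZZ_p}(\GG_m)=\ZZ_p\cotimes R=R$, using that $R$ is $p$-complete. It also gives $B([p])=\Id\otimes\psi=\psi$, where $\psi(x)=x^p+p\delta(x)$. Since $R$ is torsion-free, $\delta$ is recovered from $\psi$ as $\delta(x)=(\psi(x)-x^p)/p$. So the $\delta$-ring attached to $B$ is exactly $(R,\delta)$. On morphisms, the theorem says maps go to $\ZZ_p\cotimes(-)=\Id$. Since maps in both categories are ring maps compatible with $\psi$ (equivalently with $\delta$, by torsion-freeness), the composite is the identity functor. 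Finally, combining with the second theorem, the diagram~(\ref{diag:dotted-functor}) commutes with the identity as the dotted arrow.

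\textbf{Main obstacle.} The only delicate point is Step 1. There we must check that the coherence data of the sheaf really forces $B_S(\GG_m)=S\cotimes A$ and $B_S([p^r])=\Id\otimes\psi^r$. The tool is the sheaf condition along the base-change maps $\ZZ_p\to S$, which applies because all morphisms and deformations are pulled back from $\ZZ_p$. We must also check that no extra automorphism data appears. This holds because $\mathrm{Aut}$ of the deformation $\GG_m$ fixing the identification mod $\mathfrak m$ is trivial. Once this is in place, the identification is a direct comparison.
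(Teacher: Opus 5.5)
Your proposal is correct and is essentially the paper's argument. The paper likewise combines two ingredients. The first is the height-one analysis of the introductory example: there is a unique deformation $\GG_m$ and a unique deformation of each $\Frob^r$, so a sheaf reduces to the pair $(B_{\ZZ_p}(\GG_m),\psi)$, and the congruence condition plus torsion-freeness turn $\psi$ into a $\delta$-structure. The second is the main theorem, which gives $B_{\ZZ_p}(\GG_m)=\ZZ_p\cotimes R=R$ and sends the deformation of Frobenius to $\psi$. Your extra checks (that deformations have no nontrivial automorphisms, and that on morphisms the functor is base change along $E_0=\ZZ_p$, hence the identity) make explicit what the paper leaves implicit.
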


If $S$ is a $K(n)$-local $E$-algebra, then  $\pi_0(S)$ has an associated sheaf $B^{\pi_0S} \in \Sh(\Def, \Alg)$. 
For a $p$-adically flat ring spectrum $R$, we can form the  tensor product $L_{K(n)}(S \otimes R)$, and this is also a $K(n)$-local $E$-algebra. Since we have
\[
L_{K(n)}(S \otimes R) = S \cotimes[E] L_{K(n)}(E \otimes R)
\]
(where we take the tensor product $\cotimes[E]$ in the $K(n)$-local category), and the functors $\CAlg_{E(n)}^{K(n),\text{even}} \to \Alg_{\TT}$ and $\Alg_{\TT} \to \Sh(\Def, \Alg)$ are monoidal (\cite{Rezk09}), we get:
\begin{cor}
    Let $S$ be a $K(n)$-local $E$-algebra, and $R$ a $p$-adically flat ring spectrum. Let $\psi : \pi_0R \to \pi_0R$ be the lift of Frobenius associated to the $\delta$-ring structure of $\pi_0R$.
    
    The $\TT$-algebra structure on $\pi_0(S \cotimes R)$ corresponds to a sheaf $B \in \Sh(\Def, \Alg)$, such that it assigns to a deformation $G$ over $P$ the ring $B_{P}(G) = B^{\pi_0S}_P(G) \cotimes \pi_0R$, and for a morphism $f : G \to G'$ deforming $\Frob^r$:
    \[
    B_P(f) = B^{\pi_0S}_P(f) \otimes \psi^r
    \]
\end{cor}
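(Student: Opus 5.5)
The plan is to reduce the corollary to the second theorem above (the description of the sheaf attached to $L_{K(n)}(R \otimes E)$), using monoidality of the relevant functors. First I would establish the equivalence
\[
L_{K(n)}(S \otimes R) \simeq S \cotimes[E] L_{K(n)}(E \otimes R)
\]
of $K(n)$-local $E$-algebras. It follows from $S \otimes R \simeq S \otimes_E (E \otimes R)$, together with the fact that $K(n)$-localization is symmetric monoidal from spectra to the $K(n)$-local category, and is compatible with relative tensor products over $E$. Both sides are commutative $E$-algebras, and the equivalence respects this structure.

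Next I would check that the hypotheses needed for the monoidality results of \cite{Rezk09} hold. Concretely, $\pi_0$ must commute with the completed tensor product:
\[
\pi_0\bigl(S \cotimes[E] L_{K(n)}(E\otimes R)\bigr) \cong \pi_0 S \cotimes[E_0] (E_0 \cotimes \pi_0 R) \cong \pi_0 S \cotimes \pi_0 R .
\]
Here I would use that $L_{K(n)}(E \otimes R)$ is pro-free (flat) over $E$, with $\pi_* = E_* \cotimes \pi_0 R$. This holds because $\FF_p \otimes R$ is discrete and $\pi_0 R$ is $p$-torsion free, so $E\otimes R$ has homotopy $E_*\otimes \pi_0R$ after $p$-completion; a Künneth argument then gives the displayed formula. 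Given this, Rezk's result says the $\TT$-algebra structure on the tensor product is the coproduct of the $\TT$-algebras $\pi_0 S$ and $E_0 \cotimes \pi_0 R$. The functor to $\Sh(\Def,\Alg)$ sends coproducts to objectwise completed tensor products of sheaves, with morphisms acting diagonally.

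Finally I would plug in the second theorem: its sheaf assigns $B_P(G) = P \cotimes \pi_0 R$ and $B_P(f) = \Id_P \otimes \psi^r$. Tensoring over $P$ with $B^{\pi_0 S}_P(G)$ yields
\[
B^{\pi_0S}_P(G) \cotimes[P] (P \cotimes \pi_0R) = B^{\pi_0S}_P(G) \cotimes \pi_0 R,
\]
and the morphism becomes $B^{\pi_0S}_P(f) \otimes \psi^r$.

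The main obstacle is the middle step. One has to justify that the monoidal compatibility of the functor to $\TT$-algebras genuinely applies, which requires flatness/pro-freeness of $L_{K(n)}(E \otimes R)$ over $E$ so that $\pi_0$ of the completed tensor product is the completed tensor product of $\pi_0$'s. One also has to make sure the coproduct in sheaves is the pointwise completed tensor product, so that torsion-freeness is not needed for $S$ itself. I would handle flatness via the Künneth spectral sequence for $K(n)$-local $E$-modules, which collapses because one factor is pro-free.
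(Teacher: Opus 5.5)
Your route is the paper's route. The paper obtains this corollary in one line from three ingredients, exactly as in your first and third steps: the equivalence $L_{K(n)}(S\otimes R)\simeq S\cotimes[E]L_{K(n)}(E\otimes R)$, the monoidality (cited to Rezk) of $\pi_0\colon \CAlg^{K(n),\mathrm{even}}_{E}\to\Alg_{\TT}$ and of $\Alg_{\TT}\to\Sh(\Def,\Alg)$, and the second theorem. The paper never discusses the K\"unneth issue you single out, so flagging it is an improvement. However, the mechanism you propose for resolving it does not work in the stated generality.

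Write $F=L_{K(n)}(E\otimes R)\simeq L_{K(n)}(\bigoplus_a E)$. Then $S\cotimes[E]F\simeq L_{K(n)}(\bigoplus_a S)$. The uncompleted $\mathrm{Tor}$ vanishes because $\pi_*F$ is flat, but the completion spectral sequence computing the homotopy of this $K(n)$-localization still has $E_2$-term $L_s\bigl(\bigoplus_a\pi_*S\bigr)$. Pro-freeness of $F$ does not make the terms with $s>0$ vanish. Recall the Hovey--Strickland sequence $0\to\lim^1_k\mathrm{Tor}_{s+1}(E_0/I_k,M)\to L_sM\to\lim_k\mathrm{Tor}_s(E_0/I_k,M)\to 0$. $L$-completeness of $\pi_*S$ only gives $\lim=\lim^1=0$ for its $\mathrm{Tor}$-towers, not the Mittag-Leffler property. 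Once $n\ge 2$, already the $\mathrm{Tor}_2$-towers can fail to be Mittag-Leffler, and then $\lim^1$ of a countable direct sum of copies of such a tower is nonzero. Consequently $L_{K(n)}(\bigoplus_a S)$ can carry homotopy not accounted for by $\pi_*S\cotimes\pi_0R$, for example in odd degrees.

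To close this step you need a hypothesis on $S$ as well, for instance that $\pi_*S$ is flat over $E_*$. Then $\bigoplus_a\pi_*S$ is flat, $L_s$ vanishes for $s>0$, and the rest of your argument goes through verbatim. The paper's bare appeal to Rezk's monoidality tacitly requires the same kind of assumption.
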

An essential component of the proof is the algebro-geometric understanding of the $E$-cohomology rings of finite groups. In particular, we use the well-established result that $\Spf(E^0(B\Sigma_{p^n}))$ is closely related to classifying subgroups of the formal group over $E$ (cf.\ \cite{Strickland98}).

Further, for an abelian group $A$, we use the geometry of the formal scheme $\Spf(E^0(BA))$ and its closed subschemes, which classify homomorphisms from $A$ to the formal group.

Finally, we establish a geometric interpretation of the ring $\pi_0(R^{\tau A})$, where $(-)^{\tau A}$ denotes the proper Tate construction, introduced in \cref{subsection:proper-tate-intro}. This interpretation is developed in \cref{subsection:calculations}.

\subsection{Motivation} \label{subsection:motivation}

It is of interest to study the mapping space $\Mor(R,S)$ for $p$-adically flat ring spectra $R$ and $S$.
This space can be viewed as a homotopical analogue of morphisms between flat algebras in ordinary algebra, and understanding its structure plays an important role in spectral algebraic geometry.
In general, however, this space is complicated and its homotopy groups are mostly unknown. 
Nevertheless, there are some cases where explicit calculations have been made (see, for example, \cite{CNY24}).

One way to approach this problem is to use the chromatic convergence theorem on $S$, giving $S = \lim_n L_{E(n)}S$ (note that the chromatic convergence is classically formulated for finite ring spectra, but it is also true for $p$-adically flat spectra). Thus:
\[
\Hom(R, S) = \lim \Hom(R, L_{E(n)}S) 
\]
The space $\Hom(R, L_{E(n)} S)$ is related to the space $\Hom(R, L_{K(n)}S)$, as $L_{E(n)}S$ is equivalent to the pullback $L_{E(n-1)}S \times_{L_{E(n-1)}L_{K(n)}S} L_{K(n)}S$.

We have 
\[
L_{K(n)}S = \left( L_{K(n)}(S \otimes E(n)) \right)^{h\GG_n}
\]
Where $\GG_n$ is the Morava stabilizer group. Our next step is to find the $\GG_n$-equivariant maps in $\Hom(R, L_{K(n)}(S \otimes E(n)))$. Finally, we use the fact that this is the same as $E(n)$-algebra maps $\Hom_{E(n)}(L_{K(n)}(R \otimes E(n)), L_{K(n)}(S \otimes E(n)))$. 

In this project, we aim to explicitly determine $\pi_0(L_{K(n)}(R \otimes E(n)))$, which could be the first step in this ambitious program. 

\subsection{Conventions}

We follow the following convention:

\begin{enumerate}

    \item For two spectra $X, Y$, we will denote their smash product as $X \otimes Y$ and call it the tensor product of the spectra (this is also sometimes denoted by $X \wedge Y$).

    \item Throughout the paper we will use the notation $\cotimes$ to mean completed tensor product. That is, for two complete local rings $R,S$, $R \cotimes S$ is completed with respect to both maximal ideals. 
    For a complete local ring $R$ and another ring $A$, $R \cotimes A$ will be completed with respect to the maximal ideal of $R$. 

    \item In all tensor products involving $K(n)$-local rings, $\cotimes$ will mean the $K(n)$-local tensor product. That is, for two ring spectra $R,S$ in the $K(n)$-local category, $R \cotimes S = L_{K(n)}(R \otimes S)$, and for a $p$-adically flat ring spectrum $R$, $R \cotimes E$ denotes $L_{K(n)}(R \otimes E)$.

    \item $E$ will always denote Morava $E$-theory of height $n$ at a prime $p$, where $n$ and $p$ are fixed throughout the paper.

    \item Throughout, all ring spectra are assumed to be $\EE_{\infty}$. 
\end{enumerate}

\subsection{Organization \& Proof Outline}

In \cref{section:preliminaries} we recall some algebraic background on $\delta$-rings and formal groups. We also review facts about $p$-adically flat spectra and the structure of the category of $K(n)$-local $E$-modules. In particular, we recall the result of \cite{Rezk09} which characterizes the power operations in the category of $K(n)$-local $E$-modules.
\\ \\ 
In \cref{section:construction} we discuss the algebraic construction of the functor $\Alg_{\delta} \to \Alg_{\TT}$, in a way that is essentially oblivious to the topological origin of the functor.
\\ \\ 
In \cref{section:proper-tate} we recall some facts about the proper Tate construction, in particular that it is exact, lax-symmetric monoidal, and admits a lax natural transformation $\Delta_G : X \to (X^{\otimes G})^{\tau G}$ called the Tate-diagonal. 
We also define the generalized Frobenius map $R \to R^{\tau G}$, and relate it with the more classical Tate-valued Frobenius map $R \to R^{tC_p}$, where $R$ is a $p$-adically flat ring spectrum, and $G$ is an abelian $p$-group.
\\ \\
In \cref{section:correctness} we prove that the construction described in \cref{section:construction} is the missing functor from diagram~(\ref{diag:dotted-functor}). The argument proceeds in stages:
\begin{enumerate}
    \item[-] In \cref{subsection:recollection-on-T-algs}, we recall some facts about the monad $\TT$. 
    
    \item[-] In \cref{subsection:reduction-universal}, we construct two maps
    $\pi_0(R \cotimes E) \to \pi_0((R \cotimes E)^{B\Sigma_{p^m}}) / (I_{tr}) = (R \cotimes E)^0(B\Sigma_{p^m})/(I_{tr})$,
    where $I_{tr}$ denotes a certain transfer ideal. We show that it suffices to prove that these two maps agree. They encode the action of the $\TT$-algebra on the universal deformation: the expected action and the action coming from the $\TT$-algebra structure.
    \item[-] In \cref{subsection:abelian-subgroups} we use a certain injection 
    \[
    (R \cotimes E)^0(B\Sigma_{p^m})/(I_{tr}) \to \bigoplus_{A \subseteq \Sigma_{p^m}} (R \cotimes E)^0 (BA) / (I_{tr})
    \]
    where $A$ runs over abelian subgroups, to reduce our problem to showing the two maps agree after restriction to abelian subgroups. 

    \item[-] In \cref{subsection:comp-proper-tate}, we construct a map $(R \cotimes E)^0(BA)/(I_{tr}) \to \pi_0((R \cotimes E)^{\tau A})$, and analyze its basic properties. In \cref{subsection:calculations}, we show that this map is injective. Consequently, we may postcompose our two previously defined maps with it, reducing the proof to showing that the resulting maps $\pi_0(R \cotimes E) \to \pi_0((R \cotimes E)^{\tau A})$ coincide.

    The proof relies on the geometric interpretation of the rings and maps involved.

    \item[-] Finally, in \cref{subsection:putting-it-all-together} we use the fact that $\pi_0(R) \otimes \pi_0(E) \to \pi_0(R \cotimes E)$ is a surjection, to reduce our problem once again to checking that two maps $\pi_0(R) \otimes \pi_0(E) \to \pi_0((R \cotimes E)^{\tau A})$ agree. We then show the two maps do, in fact, agree, combining ideas from \cref{section:proper-tate} and \cref{section:correctness}.
\end{enumerate}

\subsection{Acknowledgments}

I would like to thank my advisor, Shachar Carmeli, for his guidance and support throughout this project; Akhil Mathew for sharing ideas related to this work; and Noam Nissan for her valuable insight and for taking the time to read and comment on drafts of this work. 

This project is partially supported by BSF grant 2024766.

\section{Preliminaries} \label{section:preliminaries}

In this section, we review background material. We recall basic properties of $\delta$-rings, formal groups, and deformation theory. We later introduce $p$-adically flat spectra, review Morava $E$-theory and modules over it, and summarize key facts about the proper Tate construction in higher algebra.

Throughout the whole paper we fix a prime $p$.

\subsection{$\delta$-Rings} \label{subsection:delta-rings}

\begin{definition}[{\cite[Definition 1]{Joyal85}}]
    A $\delta$-ring is a ring $R$, together with a map $\delta \colon R \to R$, such that $\delta(1)=0$,  $\delta$ satisfies the product rule
\[
\delta(ab) = \delta(a)b^p + a^p \delta(b) + p \delta(a) \delta(b)
\]
and the sum rule
\[
\delta(a+b) = \delta(a) + \delta(b) + \frac{a^p + b^p - (a+b)^p}{p}
\]
where the division by $p$ is formal. 
\end{definition}

Given a $\delta$-ring $R$, we define $\psi(x) := x^p + p\delta(x)$. It is straightforward to verify that $\psi$ is a ring endomorphism (unlike $\delta$ itself). Moreover, $\psi(a) \equiv a^p \pmod{pR}$, so $\psi$ reduces modulo $p$ to the Frobenius endomorphism $R \to R/p$. 

\begin{definition}
    A ring endomorphism $\psi\colon R \to R$ is called a lift of Frobenius if its reduction modulo $pR$ agrees with the Frobenius homomorphism $R \to R/p$.
\end{definition}

Thus, there is a forgetful functor from the category of $\delta$-rings to the category of rings equipped with a lift of Frobenius. If $R$ is $p$-torsion free, then $\delta$-ring structures on $R$ are in bijection with lifts of Frobenius on $R$ (see \cite{Joyal85}).

In fact, for $p$-torsion-free rings, the axioms defining a $\delta$-ring are precisely those ensuring that the map $x \mapsto x^p + p\delta(x)$ is a ring endomorphism. We denote the category of $\delta$-rings by $\Alg_{\delta}$, and torsion-free $\delta$-rings by $\Alg_{\delta,\text{tf}}$.

\subsection{Formal Groups and Deformations} \label{subsection:formal-groups}

We fix a perfect field $\kappa$ of characteristic $p$ and a formal group $\Gamma$ over $\kappa$ of height $n$. This means that $\Gamma$ has underlying formal scheme $\Spf \kappa[\![x]\!]$, together with a group structure $\Gamma \times \Gamma \to \Gamma$ given in coordinates by a power series $f(x,y) \in \kappa[\![x,y]\!]$ satisfying

\[
f(x,0)=f(0,x)=x \;\;,\;\; f(x,y)=f(y,x) \;\;,\;\;f(x,f(y,z))=f(f(x,y),z)
\]
For $m \ge 0$, we define the $m$-th series of $f$ recursively, by $[1](x)=x$ and $[m](x) = f(x, [m-1](x))$. 
Then, as we chose our formal group to be of height $n$, $[p](x)$ has the form $[p](x) = \lambda x^{p^n} + (\text{higher terms})$, where $\lambda \ne 0$.

\begin{definition}[Homomrophism, Isogeny]
    Given formal groups $\Gamma$ and $\Gamma'$, a homomorphism $g \colon \Gamma \to \Gamma'$ is an a homomorphism of schemes preserving the group structure (i.e., $m_{\Gamma'} \circ (g \times g)=g \circ m_{\Gamma}$, where $m_{\Gamma} \colon \Gamma \times \Gamma \to \Gamma$ is the multiplication map). 
    Note that $g$ is an isomorphism if and only if it is an isomorphism of formal schemes.  An isogeny is a homomorphism $g \colon \Gamma \to \Gamma'$ of formal groups, for which $g^*\colon \scO_{\Gamma'} \to \scO_{\Gamma}$ is finite and free.
\end{definition}
\begin{definition}[Deformation of Formal Group] 
    Let $\Gamma$ be a formal group over the field $\kappa$.
    A deformation of $\Gamma$ to a complete local ring $R$ is a triple $(G, i, \alpha)$, where $G$ is a formal group over $R$, $i \colon \kappa \to R/\frakm_R$ is a field embedding, and $\alpha \colon p^*G \xrightarrow[]{\cong} i^* \Gamma$ is an isomorphism, where $p \colon R \to R/\frakm_R$ is the quotient map.
\end{definition}
\noindent Here $f^*G$ denotes the pullback of the formal scheme $G$ along the morphism $f$.
\\\\
Suppose that $R$ has characteristic $p$, so that the absolute Frobenius $\phi(x)=x^p$ is an endomorphism of $R$. Forming the pullback $\phi^*G$, we obtain the relative Frobenius isogeny $\Frob\colon G \to \phi^*G$. By applying this construction repeatedly, we can form the iterated relative Frobenius map $\Frob^r \colon G \to (\phi^r)^* G$.
% If we assume that $\phi$ is injective, then $\phi^*G$ has a formal group law equal to that of $G$, with coeficients raised to the power of $p$. $Frob \colon G -> \phi^*G$ corresponds to $t \mapsto t^p$.  
\begin{definition}[Defomration of Frobenius]
    Let $(G,i,\alpha)$ and $(G',i',\alpha')$ be deformations of $\Gamma$ to $R$, and let $f\colon G \to G'$ be a homomorphism.
    We say that $f$ is a deformation of $\Frob^r$ if $i \circ \phi^r = i'$ and the following diagram commutes:
\[
\begin{tikzcd}
i^* \Gamma  \arrow[r, "\Frob^r"] \arrow[d, "\alpha"] & (\phi^r)^* i^* \Gamma \arrow[d, "\alpha'"] \\
p^*G \arrow[r, "f"]                                  & p^* G'                  
\end{tikzcd}
\]
\end{definition}

\begin{rmrk}
    When $r = 0$, the above definition recovers the standard definition of a homomorphism of deformations. 
\end{rmrk}

Let $\Def_R$ denote the category of deformations of $\Gamma$ to $R$, whose morphisms are deformations of iterated Frobenius maps. Note that this category is graded, in the sense that the hom-sets have a grading according to which power of $\Frob$ is being deformed (and this is part of the data of a morphism). 

The assignment $R \mapsto \Def_R^{\op}$ defines a functor ${\scR}^{\op} \to \Cat$ (and in fact it is a sheaf with respect to some appropriate topology), where $\scR$ is the full subcategory of complete local rings, consisting of complete local rings with residue field of characteristic $p$.

We also consider the functor $R \mapsto \Alg_R$, and define $\Sh(\Def,\Alg)$ to be the category of natural transformations between these functors.
An object $B$ of this category assigns to each ring $R$ a functor $B_R\colon \Def_R^{\op} \to \Alg_R$, together with natural isomorphisms $B_f \colon f^*B_R \to B_{R'}f^*$ for each morphism $f\colon R \to R'$, satisfying suitable coherence conditions.

We refer to $\Sh(\Def,\Alg)$ as the category of sheaves on the deformation category.

\begin{rmrk} 
The object $\Def$ is sometimes called a heap, analogous to a stack but valued in categories rather than groupoids. The category $\Sh(\Def,\Alg)$ may thus be viewed as the category of sheaves of algebras on this heap. 
\end{rmrk}

\subsection{$p$-adically Flat Spectra} \label{subsection:flatness}

Let $\pcs$ denote the $p$-complete sphere spectrum. In this subsection, we introduce the notion of $p$-adic flatness over $\pcs$ and record some of its basic properties.

\begin{lem} \label{lem:flat-equiv}
    Let $X$ be a $p$-complete connective spectrum. The following conditions are equivalent:
    \begin{enumerate}
        \item[(i)]   $X \otimes \FF_p$ is discrete.

        \item[(ii)] $X$ is equivalent to a $p$-completed direct sum of copies of $\pcs$, as an $\pcs$-module.
        
    \end{enumerate}
\end{lem}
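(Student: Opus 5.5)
The direction (ii) $\Rightarrow$ (i) is a direct computation. Smashing with $\FF_p$ commutes with direct sums, and the $p$-completion does not change $\FF_p$-homology: $\FF_p$ is $p$-complete and $p$-torsion, so $X \otimes \FF_p \simeq X^\wedge_p \otimes \FF_p$ for any spectrum $X$. Hence if $X \simeq (\bigoplus_I \pcs)^\wedge_p$, then
\[
X \otimes \FF_p \simeq \bigoplus_I \pcs \otimes \FF_p \simeq \bigoplus_I \FF_p,
\]
which is discrete.

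For (i) $\Rightarrow$ (ii) I would build the map by hand and then check that it is an equivalence.

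\begin{enumerate}
\item Since $X$ is connective, the Hurewicz theorem mod $p$ (with $\mathrm{cofib}(p)$ connective) identifies $\pi_0(X \otimes \FF_p)$ with $\pi_0(X)/p$, which is an $\FF_p$-vector space.
\item Choose an $\FF_p$-basis $\{\bar x_i\}_{i\in I}$ of $\pi_0(X)/p$. Lift each $\bar x_i$ to $x_i \in \pi_0 X$, which is the same as a map $\pcs \to X$, because $X$ is $p$-complete and hence $\mathrm{Map}(\pcs, X) = \mathrm{Map}(\SS, X)$.
\item These lifts assemble to a map $\bigoplus_I \pcs \to X$. Since the target is $p$-complete, it extends to a map $f\colon F := (\bigoplus_I \pcs)^\wedge_p \to X$.
\item Apply $-\otimes \FF_p$. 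By (i) and the computation in the first paragraph, both sides are discrete, and on $\pi_0$ the induced map is $\bigoplus_I \FF_p \to \pi_0(X)/p$, which is an isomorphism by the choice of basis. Hence $f \otimes \FF_p$ is an equivalence.
\item Let $C = \mathrm{cofib}(f)$. Then $C \otimes \FF_p \simeq 0$, and $C$ is $p$-complete, being the cofiber of a map between $p$-complete spectra. Because $C\otimes \FF_p \simeq C/p \otimes_{\SS/p}\cdots$, i.e.\ $C/p$ is built from $C\otimes\FF_p$-type pieces, we want to conclude $C/p = 0$.
\item Once $C/p=0$, multiplication by $p$ is an equivalence on $C$. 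Then $C \simeq \lim(\cdots \xrightarrow{p} C \xrightarrow{p} C)$ must vanish, since $C$ is $p$-complete.
\end{enumerate}

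The main subtlety is the step from $C\otimes \FF_p=0$ to $C/p = 0$. This needs $C$ to be bounded below, so that mod $p$ homology detects equivalences; for non-connective $C$ it can fail, for example for $K(n)$-local spectra. Here $C$ is connective, since $F$ and $X$ are connective. I would argue by contradiction: if $C/p\neq 0$, let $k$ be the lowest degree with $\pi_k(C/p)\neq 0$. The Hurewicz theorem applied to the connective spectrum $\Sigma^{-k} C/p$ gives
\[
\pi_k(C/p \otimes \FF_p) \;\cong\; \pi_k(C/p)\otimes_{\ZZ}\FF_p .
\]
Since $\pi_k(C/p)$ is a nonzero group of exponent dividing $p^2$, this tensor product is nonzero. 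But $C/p \otimes \FF_p \simeq C\otimes \FF_p \otimes \SS/p = 0$, a contradiction. Hence $C/p=0$, so $C = 0$ by the last step, and $f$ is an equivalence.
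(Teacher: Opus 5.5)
Your proposal is correct and follows essentially the same route as the paper. Both arguments choose an $\FF_p$-basis of $\pi_0(X\otimes\FF_p)\cong\pi_0(X)/p$, lift it to maps $\pcs\to X$, assemble these into $(\bigoplus\pcs)^\wedge_p\to X$, and conclude because mod $p$ homology detects equivalences; the one difference is that you actually prove this detection step for connective $p$-complete spectra, rightly noting that connectivity is needed (the paper's appeal to ``Hurewicz'' leaves this implicit), though the heuristic sentence in your step 5 about $C/p$ being ``built from $C\otimes\FF_p$-type pieces'' is superfluous and should be dropped in favor of your lowest-degree argument.
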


\begin{proof}
    For (i)$\Rightarrow$(ii), assume that $X \otimes \FF_p$ is discrete. Then it is an ordinary $\FF_p$-vector space, and hence of the form $X \otimes \FF_p \simeq \bigoplus_{a\in A} \FF_p$.
    By the Hurewicz theorem, for each $a\in A$ there exists a map $f_a\colon \pcs \to X$ whose image in $\FF_p$-homology represents the corresponding summand.
    These maps assemble to a morphism $f\colon (\bigoplus_{a\in A} \pcs)_p^{\wedge} \to X$, which induces an isomorphism on $\FF_p$-homology. By the Hurewicz theorem, $\FF_p$ detects isomorphisms for $p$-complete spectra, thus $f$ is an isomorphism.

    The implication (ii)$\Rightarrow$(i) is immediate: if $X \simeq (\bigoplus_{a\in A} \pcs)_p^{\wedge}$, then $X \otimes \FF_p \simeq \bigoplus_{a\in A} \FF_p$.
    
\end{proof}

\begin{definition} 
A connective, $p$-complete ring spectrum $X$ is called $p$-adically flat if it satisfies the equivalent conditions of \cref{lem:flat-equiv}. 
\end{definition}

Note that if $X$ is $p$-adically flat, than $\pi_0(X)$ is $p$-adically flat over $\ZZ_p$. Moreover, we will frequently use the fact that for a $p$-adically flat spectrum $X$, and a $p$-complete spectrum $F$, one has $\pi_0(X \cotimes F) = \pi_0X \cotimes_{\ZZ_p} \pi_0 F$, where the tensor product(s) are $p$-completed. Both of these facts follow immediately from expressing $X$ as a $p$-completed direct sum of copies of $\pcs$. 

Further:

\begin{thm}
    Let $X$ be a $p$-adically flat spectrum. Then the trivial map  $\text{triv}:X \to X^{tC_p}$ is an equivalence, where $X^{t C_p}$ is the Tate-construction.
\end{thm}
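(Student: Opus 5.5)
The plan is to reduce to the case $X = \pcs$ via \cref{lem:flat-equiv}, and then invoke the Segal conjecture for $C_p$ (Lin's theorem for $p=2$, Gunawardena for odd $p$).

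First I would handle the sphere. Lin–Gunawardena says that the map $\pcs \to (\pcs)^{tC_p}$ is an equivalence after $p$-completion. Precisely, $\SS^{tC_p} \simeq (\SS)^\wedge_p$, and the map in question is the composite $\SS \to \SS^{hC_p} \to \SS^{tC_p}$ of the canonical map with trivial action followed by the map to Tate. Since the target is already $p$-complete, the same holds with $\pcs$ in place of $\SS$. The point is that $(\SS/p)^{tC_p}$ agrees with $(\pcs/p)^{tC_p}$, because $\SS\to\pcs$ is a $p$-adic equivalence and $(-)^{tC_p}$ is exact.

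Next I would pass to $X \simeq (\bigoplus_{a\in A}\pcs)^\wedge_p$, using \cref{lem:flat-equiv}. Since $\mathrm{triv}$ is a natural transformation between exact functors, it suffices to check that it is an equivalence after applying $-\otimes \SS/p$, provided both sides are $p$-complete.

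For the source this is clear. For the target, note that $(-)^{tC_p}$ of a bounded below spectrum with $p$-complete underlying spectrum is $p$-complete. Indeed, $Y^{tC_p}$ is always $p$-complete for bounded below $Y$, being a module over $\SS^{tC_p}$, which is $p$-complete, and the standard argument via $Y^{tC_p} = \lim (\tau_{\le n} Y)^{tC_p}$ applies.

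Mod $p$, $X/p \simeq \bigoplus_A \pcs/p$. Here the main obstacle appears: $(-)^{tC_p}$ does not commute with infinite direct sums in general. I would handle this using boundedness below. For uniformly bounded below families, $(-)^{tC_p}$ commutes with direct sums after $p$-completion, equivalently it commutes with them mod $p$ on uniformly bounded below objects. This works because $(-)^{hC_p}$ restricted to spectra that are connective up to a fixed shift is computed by a limit of Postnikov truncations, and in each fixed degree only finitely many cells contribute. In practice I would argue on $\FF_p$-homology (or via $\tau_{\le n}$) that $\bigoplus_A (\pcs/p)^{tC_p} \to (\bigoplus_A \pcs/p)^{tC_p}$ is an equivalence. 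Concretely, the homotopy fixed point spectral sequence is uniformly convergent in each degree for uniformly bounded below inputs, and homotopy orbits always commute with colimits.

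Granting this, $\mathrm{triv}\otimes \SS/p$ becomes a direct sum of copies of the sphere case mod $p$, which is an equivalence. Hence $\mathrm{triv}$ is an equivalence for $X$.

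The step I expect to need the most care is this commutation of the Tate construction with (completed) infinite sums. If the spectral sequence argument is awkward, I would instead use that $(-)^{tC_p}$ restricted to $p$-complete bounded below spectra is an exact functor preserving filtered colimits of uniformly bounded below objects after $p$-completion, a known property of the Tate construction for $p$-groups (Nikolaus–Scholze, Lemma I.2.6 style).
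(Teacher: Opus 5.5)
Your architecture is the same one the paper relies on through its citation of Yuan: reduce to $\pcs$ via \cref{lem:flat-equiv}, apply Lin--Gunawardena to the sphere, work mod $p$ after checking that both sides are $p$-complete, and commute $(-)^{tC_p}$ past the infinite sum. The paper adds only the remark that the extra input is finiteness of the cells of $BG$ in each degree. The problem is the step you single out. The principle you invoke is false: $(-)^{tC_p}$ does \emph{not} commute with direct sums of uniformly bounded below spectra, even after $p$-completion.

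For a counterexample, take $Y=\bigoplus_{a\ge 0}\Sigma^a H\FF_p$ with trivial action. By connectivity this sum is also the product $\prod_a\Sigma^aH\FF_p$. Both $(-)^{hC_p}$ and $(-)\otimes\Sigma^\infty_+BC_p$ commute with it, since after tensoring it is still both a sum and a product. Hence $\pi_iY^{tC_p}\cong\prod_{a\ge0}\hat H^{a-i}(C_p;\FF_p)=\prod_{a\ge 0}\FF_p$, whereas $\pi_i\bigoplus_a(\Sigma^aH\FF_p)^{tC_p}\cong\bigoplus_{a\ge0}\FF_p$. Everything here is an $H\FF_p$-module, so $p$-completion changes nothing.

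Your spectral sequence justification is also backwards. For bounded below $Y$, total degree $i$ of the homotopy fixed point spectral sequence receives $H^s(C_p;\pi_{i+s}Y)$ for infinitely many $s$. Only finitely many cells of $BC_p$ matter in a fixed degree when $Y$ is bounded \emph{above}, or for homotopy orbits. A smaller slip: being a module over the $p$-complete ring $\SSS^{tC_p}$ does not imply $p$-completeness (consider $\pcs[1/p]$). The Postnikov-limit argument you also sketch is the correct one.

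The missing input is specific to the summand $\SSS/p$, and it can be used uniformly because all summands are copies of it. The groups $\pi_i(\SSS/p)$ are finite, and so are the groups $\pi_i(\SSS/p)^{hC_p}$. The latter follows from the norm sequence $(\SSS/p)_{hC_p}\to(\SSS/p)^{hC_p}\to(\SSS/p)^{tC_p}\simeq\SSS/p$ together with finiteness of $\pi_i(\SSS/p\otimes\Sigma^\infty_+BC_p)$.

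With this input the step goes through, and this is where the finite skeleta $BC_p^{(k)}$ actually enter.
\begin{itemize}
\item Write $(-)^{hC_p}=\lim_k\mathrm{map}(\Sigma^\infty_+BC_p^{(k)},-)$; each term commutes with sums because the skeleta are finite.
\item The tower $\{\pi_i\mathrm{map}(\Sigma^\infty_+BC_p^{(k)},\SSS/p)\}_k$ consists of finite groups with finite limit $\pi_i(\SSS/p)^{hC_p}$. It is therefore Mittag-Leffler with eventually constant stable images, i.e.\ pro-isomorphic to a constant group.
\item Pro-isomorphisms survive applying $\bigoplus_A$ levelwise. So $\lim_k$ commutes with $\bigoplus_A$ and $\lim^1$ vanishes, which gives $(\bigoplus_A\SSS/p)^{hC_p}\simeq\bigoplus_A(\SSS/p)^{hC_p}$.
\item Homotopy orbits commute with all sums, so the same holds for $(-)^{tC_p}$.
\end{itemize}
In the example above this argument breaks exactly because the summands vary and there is no uniform stabilization index.
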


We will in fact need a generalization of this result, which will be given in \cref{thm:tate-is-equiv}. Finally, we are ready to introduce the $\delta$-ring structure on $\pi_0 R$ when $R$ is $p$-adically flat ring spectrum:

\begin{construction}[{\cite[Definition IV.1.1]{NS18}}]
    Suppose $R$ is a $p$-adically flat ring spectrum. 
    There exists a natural ring map $\phi \colon R \xrightarrow[]{\Delta} \left( 
    R^{\otimes p}\right)^{tC_p} \xrightarrow[]{m} R^{tC_p} \cong R$ where the first map is the Tate diagonal map (see \cite[Definition III.1.4]{NS18}, and \cref{cor:tate-diagonal}), the second map is multiplication, and the equivalence $R^{tC_p} \simeq R$ is induced by the trivial map $R \to R^{t C_p}$. 

    After applying $\pi_0$, $\pi_0 \phi \colon \pi_0 R \to \pi_0 R$ becomes a lift of Frobenius. As $\pi_0R$ is torsion free, this lift of Frobenius uniquely corresponds to a $\delta$-ring structure on $\pi_0R$.
\end{construction}

\subsection{Morava E-theory}
\label{subsection:morava-e-theory}
Let $E:=E(n)$ denote Morava $E$-theory associated to our fixed formal group $\Gamma$ over $\kappa$. $E$ satisfies $\pi_*E=LT_n[u^\pm]$, where $LT_n$ is the universal ring of deformations of $\Gamma$. Explicitly, deformations of $\Gamma$ to a ring $R$ corresponds uniquely to a ring maps $LT_n \to R$. Let $K(n)$ denote Morava K-theory, which is a ring spectrum satisfying $\pi_*K(n)=\FF_p[v_n^{\pm1}]$ where $|v_n|=2(p^n-1)$. Let $\CAlg^{K(n),\text{ev}}_{E(n)}$ denote the category of even $K(n)$-local $E(n)$-algebras. Let $L_{K(n)}$ denote the localization functor on spectra with respect to $K(n)$. 
\\ \\ 
If $M$ is a $K(n)$-local $E(n)$-module, we may form the $m$-fold tensor product $M^{\otimes_E m}$. This module is acted on by $\Sigma_m$, the group of permutations of $m$ elements, via permutations of the factors. We define $\PP_m(M) := \left( M^{\otimes_E m}\right)_{h\Sigma_m}$, the homotopy orbits. Further define $\PP(M) := \bigoplus_{m \ge 0} \PP_m(M)$. $\PP$ is a monad on the category of $E(n)$-modules, and any commutative $E(n)$-algebra defines an algebra for this monad, via the multiplication map. 

A non-trivial fact about $\PP_m$ is that if $M$ is a free and finitely generated $E(n)$-module, then $L_{K(n)}\PP_m(M)$ is also finite free (see \cite{Rezk09}).

For finite free $E(n)$-modules, there is an equivalence of categories $\pi_*:\ho\left(\Mod_{E(n)}^{\text{ff}}\right) \to \Mod_{E(n)_*}^{\text{ff}}$, where ff means finite and free $E$-algebras (see \cite{Rezk09}). 
This allows us to define a functor $\TT_m$ on $\Mod^{\text{ff}}_{E(n)_*}$, the composition of $\pi_*$ with $L_{K(n)}\PP_m$, and the inverse of $\pi_*$.
We extend $\TT_m$ to all $E(n)_*$ modules via left Kan extension along the inclusion of finite free modules in the category of all modules. Setting $\TT := \bigoplus_{m \ge 0} \TT_m$, we obtain a monad on the category of $E_*$-modules. If $R$ is a $K(n)$-local $E$-algebra, then $\pi_*(R)$ admits the structure of a $\TT$-algebra. 

There is a way to construct a ring $\Gamma$ such that $\TT$-algebras are naturally $\Gamma$-algebras, this identifies 
$\TT$-algebras with $\Gamma$-algebras satisfying a certain congruence condition. 
Concretely, for a graded $\Gamma$-algebra $B$, this condition requires $ \sigma x \equiv x^p \pmod{pB}$ for all $x \in B_0$, where $\sigma \in \Gamma$ is some special element. (See \cite{Rezk09} for a more detailed discussion of the construction of $\TT$ and $\Gamma$).

In \cite{Rezk09}, Rezk also shows an equivalence of categories: $\Sh(\Def, \Alg) \approx \Alg_{\Gamma}$. Moreover, he characterizes which objects in $\Sh(\Def, \Alg)$ correspond to $p$-torsion free $\Gamma$-algebras satisfying the congruence condition. For the sake of completeness, we will describe this condition now, even though it will not be relevant to our results.
\\ \\ 
Suppose $R$ has characteristic $p$. The absolute Frobenius $\phi$ defines a base-change functor $\phi^* \colon \Alg_R \to \Alg_R$ ($\phi^*(A) = A \otimes_{\phi} R$). There is a natural transformation $\Frob: \phi^* \Rightarrow \Id$, defined such that the composition of the map $A \to \phi^*A$ with $\Frob_A \colon \phi^*A \to A$ yields the absolute Frobenius on $A$ (i.e., the map $A \otimes_\phi R \to A$ maps $a \otimes r \mapsto r a^p$). 

An object $B \in \Sh(\Def, \Alg)$ satisfies the the \emph{Frobenius congruence condition} if for all $R$ of characteristic $p$, and $G \in \Def_R$, the following diagram commutes:
\[
\begin{tikzcd}[row sep=large, column sep = large]
    \phi^* B_R(G) \arrow[r, "B_\phi", "\simeq"'] \arrow[rd, "\Frob"'] & B_R(\phi^* G) \arrow[d, "B_R(\Frob)"] \\ 
    & B_R(G)
\end{tikzcd}
\]
Informally, this means that, up to the equivalence $B_\phi$, $B$ carries the Frobenius isogeny of formal groups to  the relative Frobenius of algebras.

\begin{rmrk} 
    We have intentionally suppressed some of the discussion relating to the treatment of the even and odd part of $\pi_*R$ where $R$ is a $K(n)$-local $E(n)$-algebra. To treat not necessarily even spectra we need to enlarge our algebraic categories (both $\Sh(\Def, \Alg)$ and $\Alg_\TT$) by allowing them to have, in addition to their even part, an odd part. Throughout the paper, we will only treat even $E$-algebras. 
\end{rmrk}

As we are interested in the associated sheaf of $R \cotimes E$ where $R$ is a $p$-adically flat ring spectrum, it will be useful to find the underlying $E_0$-module:

\begin{prop}
    If $X$ is a $p$-adically flat spectrum, then the map
    \[
    \pi_0 X \otimes \pi_0 E \to \pi_0(X \otimes E) \to  \pi_0(X \cotimes E) = \pi_0(L_{K(n)}(X \otimes E)) 
    \]
    Is a completion of $\pi_0X \otimes \pi_0E$ with respect to the maximal ideal of $\pi_0E=E_0$. I.e., \[
    \pi_0(X \cotimes E) = \pi_0(L_{K(n)}(X \otimes E)) = \pi_0 X \cotimes \pi_0 E
    \]
\end{prop}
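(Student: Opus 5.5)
We reduce to the case of a single sphere using \cref{lem:flat-equiv}. Write $X \simeq (\bigoplus_{a\in A}\pcs)^\wedge_p$. Since $E$ is $K(n)$-local, it is $p$-complete, and the map $X\otimes E \to L_{K(n)}(X\otimes E)$ factors through the $p$-completion. Let $I=(p,u_1,\dots,u_{n-1})=\frakm$ be the maximal ideal of $E_0$. For $E$-modules $M$, the standard description is
\[
L_{K(n)}M \simeq \lim_k M\otimes_E E/(p^{i_0},u_1^{i_1},\dots,u_{n-1}^{i_{n-1}}),
\]
taken over a cofinal system of generalized Moore spectra over $E$. Thus $L_{K(n)}M$ is the $I$-adic completion of $M$ in the sense of Greenlees--May/Lurie. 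We apply this with $M = E\otimes X \simeq E\otimes \bigoplus_A \SS$, since $p$-completion is absorbed by the $I$-completion. This gives $M \simeq \bigoplus_A E$ up to $p$-completion, and $L_{K(n)}M = (\bigoplus_A E)^\wedge_I$.

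Next we compute the homotopy of the $I$-completion of a free module. Because $E_*$ is even and $I$ is generated by a regular sequence, there is the Milnor/derived completion sequence
\[
0\to L^I_1\pi_{*+1}M \to \pi_*(M^\wedge_I) \to L^I_0\pi_*M\to 0 .
\]
Here $\pi_*M = \bigoplus_A E_*$ is flat over $E_*$, so it has no $I$-torsion and no derived $\lim^1$ issues arise. Concretely, the tower $\pi_*(M/(p^{i_0},\dots))= \bigoplus_A E_*/(p^{i_0},\dots)$ has surjective transition maps, so $\lim^1=0$. Hence $\pi_0 L_{K(n)}M = \lim_k \bigoplus_A E_0/I^k$, which is the $I$-adic completion of $\bigoplus_A E_0$.

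Finally we identify this with the map in the statement. By the remark after the definition of $p$-adic flatness, $\pi_0 X = (\bigoplus_A \ZZ_p)^\wedge_p$. The map $\pi_0X\otimes E_0 \to \pi_0(X\otimes E)\to \pi_0 L_{K(n)}(X\otimes E)$ sends basis elements to basis elements, so it is the evident map $(\bigoplus_A\ZZ_p)^\wedge_p\otimes E_0 \to \lim_k \bigoplus_A E_0/I^k$. Since $p\in I$, $I$-adically completing the source first $p$-completes $\pi_0 X$. So the $I$-adic completion of the source agrees with the target, which is $\pi_0X\cotimes \pi_0 E$. Naturality in $X$ shows this is independent of the chosen basis.

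The main obstacle is justifying the first step: that $L_{K(n)}$ on $E$-modules is $I$-adic completion, and that for this infinite free module the homotopy is computed by the naive limit with no $\lim^1$ or $L_1$ term. The plan is to cite the known statement $L_{K(n)}M\simeq \lim M\otimes_E E/J$ for the cofinal system of ideals $J$ (Hovey--Strickland). The vanishing of $\lim^1$ then follows from the surjectivity of the tower.
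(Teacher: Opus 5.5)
Your proposal is correct and follows the paper's route: reduce via \cref{lem:flat-equiv} to $L_{K(n)}(\bigoplus_A E)$, then identify its $\pi_0$ with the completion of $\bigoplus_A E_0$ at the maximal ideal of $E_0$; the paper takes this directly from Hovey--Strickland \cite[Proposition 8.4]{HS99}, while you unpack it through the tower $\bigoplus_A E/J_k$, whose transition maps are surjective, so $\lim^1$ vanishes. The displayed two-term local-homology sequence is not right as written (the $L^I_0$ term should be the subobject, the $L^I_1$ term should appear shifted down in degree, and for $n\ge 2$ higher $L^I_s$ occur), but nothing depends on it, since $\bigoplus_A E_*$ is flat and your actual computation goes through the Milnor sequence of the tower.
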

\begin{proof}
    Let $X = (\bigoplus_{a \in A} \pcs)_p^\wedge $. $K(n)$-localization factors through $p$-completion, thus:
    \[
    X \cotimes E = L_{K(n)}(X \otimes E) = L_{K(n)}\left( \bigoplus_{a \in A} E \right)
    \]
    By \cite[Proposition 8.4]{HS99}, $\pi_0(L_{K(n)}(\bigoplus_{a \in A}E))$ is obtained from $\pi_0(\bigoplus_{a \in A}E) = \bigoplus_{a \in A} E_0$ by completion with respect to the maximal ideal of $E_0$. This is exactly the completed tensor product $\pi_0 X \cotimes \pi_0E$.
\end{proof}

\begin{cor} \label{cor:pi-0-of-flat-tensor-E}
    Let $R$ be a $p$-adically flat ring spectrum. Then, as $E_0$-algebras:
    \[
    \pi_0(R) \cotimes E_0 = \pi_0(R \cotimes E)
    \]
\end{cor}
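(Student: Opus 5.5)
This follows from the preceding proposition applied to the underlying spectrum of $R$, together with a check that the identification respects the multiplicative structure. Since $R$ is a $p$-adically flat ring spectrum, its underlying spectrum is $p$-adically flat. The proposition then gives that the composite
\[
\pi_0 R \otimes \pi_0 E \to \pi_0(R \otimes E) \to \pi_0(L_{K(n)}(R \otimes E))
\]
exhibits the target as the completion of $\pi_0R \otimes E_0$ at the maximal ideal of $E_0$. In other words, it is an isomorphism of abelian groups $\pi_0(R) \cotimes E_0 \cong \pi_0(R \cotimes E)$. What remains is to show that this isomorphism respects the ring structure and the $E_0$-algebra structure.

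For the ring structure, note that each arrow in the composite is a ring map.
\begin{enumerate}
    \item The first arrow is the Künneth (external product) map. It is a ring homomorphism because the multiplication on $R \otimes E$ is the tensor product of the multiplications on $R$ and $E$.
    \item The second arrow is induced by the localization map $R \otimes E \to L_{K(n)}(R \otimes E)$. This is a map of $\EE_\infty$-rings, since $L_{K(n)}$ is a symmetric monoidal localization.
\end{enumerate}
Hence the composite $\pi_0R \otimes E_0 \to \pi_0(R \cotimes E)$ is a ring map.

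For the $E_0$-algebra structure, restrict this composite along $E_0 \to \pi_0R \otimes E_0$, $e \mapsto 1 \otimes e$. The result is $\pi_0$ of the unit map $E \to R \otimes E \to R \cotimes E$. This is exactly the $E_0$-algebra structure on $\pi_0(R \cotimes E)$.

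It remains to pass to the completion. The ring $\pi_0(R \cotimes E)$ is complete with respect to $\frakm_{E_0}$, by the proposition itself (or because it is the $\pi_0$ of a $K(n)$-local $E$-module). Therefore the ring map extends uniquely and continuously to $\pi_0R \cotimes E_0$. The extension is still multiplicative, because multiplication on the completed tensor product is the continuous extension of multiplication on $\pi_0R \otimes E_0$, and two continuous maps that agree on a dense subring agree everywhere. By the proposition, this extension is the bijection already identified. So it is an isomorphism of $E_0$-algebras.

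The only point that needs care is the bookkeeping of completions. The paper's convention has $\cotimes$ completed at $\frakm_{E_0} = (p, u_1, \dots, u_{n-1})$, and this already includes $p$-completion. So the $p$-completion implicit in writing $R = (\bigoplus \pcs)^\wedge_p$ does not change the answer, and the argument needs no further input.
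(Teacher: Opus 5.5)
Your proof is correct and takes the same route as the paper. You get the additive identification from the preceding proposition, observe that the composite $\pi_0 R \otimes \pi_0 E \to \pi_0(R \otimes E) \to \pi_0(R \cotimes E)$ is a ring map, and extend to the $\frakm_{E_0}$-adic completion. You also spell out details the paper leaves implicit: why each arrow is multiplicative, compatibility with the $E_0$-algebra structure, and the continuity and density argument for the extension.
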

\begin{proof}
    We know this is true as $E_0$-modules, thus we only need to verify that that the map $\pi_0(R) \cotimes \pi_0(E) \to \pi_0(R \cotimes E)$ is a ring map.
    
    Since the composition
    \[
    \pi_0(R) \otimes \pi_0(E) \to \pi_0(R \otimes E) \to \pi_0(R \cotimes E)
    \]
    is a ring map, by completing the source with respect to the maximal ideal of $E_0$ we get that the original map is also a ring map. 
\end{proof}

\subsection{Proper Tate Construction} \label{subsection:proper-tate-intro}

We have already discussed the ordinary Tate construction $(-)^{tC_p}$ in \cref{subsection:flatness}; here we introduce a useful generalization.

Informally, for a spectrum $X$ with a $G$-action, where $G$ is a finite group, the proper Tate construction of $X$, $X^{\tau G}$, captures those fixed-point phenomena that do not arise from transfer maps along proper orbits $G/H$.
This construction is functorial $(-)^{\tau G} : \Sp^{BG} \to \Sp$, and in fact lax symmetric monoidal; see \cref{prop:proper-tate-is-lax}. 

For completeness, we recall one model for the proper Tate construction.

\begin{definition} \label{def:proper-tate}
    Let $X \in \Sp^{BG}$ be a spectrum with a $G$-action, where $G$ is some finite group. Define $X^{\tau G}$ to be the cofiber of the canonical map
    \[
    \colim_{G/H \in \Orb_G} X^{hH} \to X^{hG}
    \]
   Where $\Orb_G$ denotes the orbit category of $G$, whose objects are the orbits $G/H$ and whose morphisms are $G$-equivariant maps $G/H \to G/K$; and the maps $X^{hH} \to X^{hG}$ are relative transfer maps.
\end{definition}
If the action of $G$ on $X$ is trivial and $H \subseteq G$ is a subgroup, there is a natural map $\can_H^G \colon X^{\tau H} \to X^{\tau G}$, induced by the inclusion $X^{hH} \to X^{hG}$. 
Indeed, we have the composition $X^{hH} \to X^{hG} \to X^{\tau G}$, and the pre-composition of this map with any transfer is $0$, thus it induces a map $X^{\tau H} \to X^{\tau G}$. 

If $R$ is a ring spectrum, one also obtains a generalized Frobenius map
\[
\phi^G \colon R \xrightarrow[]{\Delta_G} \left(R^{\otimes G} \right)^{\tau G} \to R^{\tau G}
\]
(See \cref{cor:tate-diagonal}).
There is, in fact, a relative version of the Frobenius $\phi_{H}^G \colon R^{\tau H} \to R^{\tau G}$, which is defined similarly. 

In the special case $G = C_p$, the proper Tate construction agrees with the usual Tate construction, $(-)^{\tau C_p} = (-)^{tC_p}$, and $\phi^{C_p} = \phi$ recovers the classical Tate-valued Frobenius.
\\ \\ 
We recall the following result for $p$-adically flat spectra.

\begin{thm} \label{thm:tate-is-equiv}
    Let $X$ be a $p$-adically flat spectrum, and $G$ be a finite $p$-group. Equip $X$ with the trivial $G$-action. Then the trivial map $X \to X^{\tau G}$ is an equivalence.
\end{thm}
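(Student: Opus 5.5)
Since $X$ is $p$-adically flat, \cref{lem:flat-equiv} lets me write $X \simeq (\bigoplus_{a\in A}\pcs)^\wedge_p$. I will prove the statement first for $X=\pcs$ and then pass to the general case. The functor $(-)^{\tau G}$ is exact and the trivial map $X \to X^{\tau G}$ is natural, so the class of spectra for which the trivial map is an equivalence is closed under finite colimits and retracts. It is not obviously closed under infinite sums or $p$-completion, so I will reduce to a mod $p$ statement instead. Both sides are $p$-complete: $X^{\tau G}$ is a cofiber of $p$-complete spectra, because homotopy fixed points of $p$-complete spectra are $p$-complete and the colimit over $\Orb_G$ is finite and built from $X^{hH}$. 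It therefore suffices to check that $X/p \to (X/p)^{\tau G}$ is an equivalence, since $(-)^{\tau G}$ commutes with $-/p$.

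Now $X/p \simeq \bigoplus_a \mathbb{S}/p$, taken $p$-completed; but $\mathbb{S}/p$ is already $p$-complete and a sum of $p$-torsion spectra is $p$-complete up to bounded issues. To handle infinite sums I will use the fact that for a finite group $G$, $(-)^{\tau G}$ commutes with colimits on uniformly bounded-below spectra. This holds because $X^{\tau G}$ can be described as a filtered colimit of finite constructions, namely $\tilde E\mathcal{P}\otimes F(EG_+,X)$, and for bounded-below $X$ with bounded connectivity the limit in $(-)^{hG}$ commutes with sums degreewise. Granting this, everything reduces to the single case $X=\pcs$ (or $\mathbb{S}/p$), with trivial action.

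For $X=\pcs$ and $G$ a $p$-group, I will use the Segal conjecture (Carlsson's theorem) in its form with the family of proper subgroups. The genuine fixed points $\mathbb{S}^G$ map to $\mathbb{S}^{hG}$ by a completion at the augmentation ideal, and for a $p$-group this becomes an equivalence after $p$-completion. Under this map the colimit of transfers from proper subgroups corresponds to the part of tom Dieck splitting $\mathbb{S}^G\simeq\bigoplus_{(H)}\Sigma^\infty_+ BW_GH$ indexed by $H\neq G$. Hence $(\pcs)^{\tau G}$ is identified with the $p$-completion of the geometric fixed points $\Phi^G\mathbb{S}=\mathbb{S}$. I then need to check that this identification is compatible with the trivial map, i.e.\ that the composite $\pcs \to \pcs^{hG}\to \pcs^{\tau G}\simeq\pcs$ is the identity. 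This follows because the inclusion $\mathbb{S}\to\mathbb{S}^G$ of the summand $H=G$ in the tom Dieck splitting, followed by geometric fixed points, is the identity. Alternatively, one can induct on $|G|$ using the isotropy separation for a central $C_p\subseteq G$, reducing to the known case $G=C_p$ recorded above and to the fact that $\pcs^{tC_p}\simeq\pcs$.

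The main obstacle is the identification of $\colim_{\Orb_G}X^{hH}\to X^{hG}$ with the Segal-conjecture picture, precisely identifying the cofiber with $p$-completed geometric fixed points. If that proves too delicate, I would fall back on the inductive route. Choose a normal subgroup $N\cong C_p$ in the $p$-group $G$. Then use the formula $X^{\tau G}\simeq (X^{tN})^{\tau (G/N)}$, which is valid for $p$-groups once transfers from subgroups not containing $N$ are accounted for. Together with $X\simeq X^{tC_p}$ for flat $X$, this reduces to smaller groups, giving the result by induction.
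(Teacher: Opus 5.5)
Your main route is the argument the paper relies on. The paper simply cites Yuan's Corollary 6.7.1 and remarks that only the finiteness of the cells of $BG$ in each degree is used. That finiteness is exactly what makes $(-)^{\tau G}$ commute with uniformly bounded-below sums, and so lets you reduce to the sphere. Your treatment of the sphere is correct: Carlsson's theorem for the $p$-group $G$, isotropy separation identifying $\mathbb{S}^{\tau G}$ mod $p$ with $\Phi^G\mathbb{S}/p=\mathbb{S}/p$, and the unit check. It also makes explicit that a general $p$-group needs Carlsson's theorem and not only the elementary abelian case.

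The gap is in the reduction to mod $p$. You need $X^{\tau G}$ to be $p$-complete, and your justification, that the colimit over $\mathrm{Orb}_G$ is finite, is wrong.

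\begin{itemize}
\item The proper orbit category has automorphism groups $W_GH$, so this colimit contains homotopy orbits; for $G=C_p$ it is just $X_{hC_p}$.
\item Homotopy orbits do not preserve $p$-completeness. With trivial action, $(KU^\wedge_p)^{tC_p}$ has $\pi_0=\mathbb{Q}_p(\zeta_p)$, so $(KU^\wedge_p)_{hC_p}$ is not $p$-complete.
\item Connectivity of $X$ does not immediately fix this, because the terms $X^{hH}$ are in general not bounded below (think of $H\mathbb{F}_p$).
\end{itemize}

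Without $p$-completeness, a mod $p$ equivalence only shows that the fiber of $X\to X^{\tau G}$ is an $\mathbb{S}[1/p]$-module, not that it vanishes.

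For $p$-adically flat $X$ the step can be repaired. First, $X^{hH}/p\simeq\bigoplus_a(\mathbb{S}/p)^{hH}\simeq\bigoplus_a\mathbb{S}^H/p$ is connective by the Segal conjecture. Hence the $p$-complete spectrum $X^{hH}$ is connective; the relevant $\lim^1$ vanishes because $\pi_0(X^{hH}/p^{k+1})\to\pi_0(X^{hH}/p^k)$ is surjective. Next, take a $G$-CW model of $E\mathcal{P}$ with finitely many cells in each dimension. It shows that each homotopy group of $\operatorname{colim}_{\mathrm{Orb}_G}X^{hH}\simeq(E\mathcal{P}_+\otimes F(EG_+,X))^G$ is computed at a finite stage, so it is derived $p$-complete. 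Therefore this colimit, and hence $X^{\tau G}$, is $p$-complete.

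Your fallback formula $X^{\tau G}\simeq(X^{tN})^{\tau(G/N)}$ is not justified either. It is true that $\Phi^G=\Phi^{G/N}\Phi^N$. But computing $\Phi^{G/N}$ as the proper Tate construction of the underlying spectrum $X^{tN}$ requires the genuine $G/N$-spectrum $\Phi^N F(EG_+,X)$ to be Borel complete. That is a generalized Tate orbit lemma you would have to prove.
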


\begin{proof}
    In \cite[Corollary 6.7.1]{Yuan21} this is proven when $G$ is an $\FF_p$ vector space. The same argument applies to any finite $p$-group, since the only property used is that $BG$ admits a cellular decomposition with finitely many cells in each degree.

    % In \cite{Carl84} it shown for finite spectra, in particular for the (p-complete) sphere spectrum $\pcs$. 

    % Since $X$ is flat, we can write $X = \left( \bigoplus_{a \in A} \pcs\right)_{p}^{\wedge}$. Since the action is trivial, $X^{hG} = X^{BG}$:

    % \[
    % X^{hG} = X^{BG} = \left( \bigoplus_{a \in A} \pcs\right)
    % \]

    % Note that we have $X^{hG} = X \cotimes (\pcs)^{hG}$ (with respect to the trivial action)\shachar{this is not that formal.}. Further, the transfer $X^{hH} \to X^{hG}$ is just the base change of the transfer $\SSS^{hH} \to \SSS^{hG}$ to $R$.
    
    % Hence, using the fact that base-change (i.e., tensor) commutes with colimits, the map:
    % \[
    % \colim_{G/H \in \text{Orb}_{G}} R^{hH} \to R^{hG}
    % \]
    % Is just the base change of the map:
    % \[
    % \colim_{G/H \in \text{Orb}_{G}} \SSS^{hH} \to \SSS^{hG}
    % \]
    % to $R$. The cofiber of this map is $\SSS^{\tau G} = \SSS$, and since $R$ is flat and $R \otimes (-)$ commutes with taking cofiber, we get $R^{\tau G} = R \otimes \SSS^{\tau G} = R$. 
\end{proof}

Let $\scQ$ be the symmetric monoidal span category $\text{Span}(\text{Finite Groups}, \text{surj}, \text{inj})$. Explicitly, $\scQ$ is a 1-category whose objects are finite groups and whose morphisms are spans of the form
\[\begin{tikzcd}[column sep = small]
	& G \\
	H && K
	\arrow[two heads, from=1-2, to=2-1]
	\arrow[hook, from=1-2, to=2-3]
\end{tikzcd}\]
where the left leg $G \twoheadrightarrow H$ is surjective and the right leg $G \hookrightarrow K$ is injective.

\begin{thm}{{\cite[Theorem 3.15]{Yuan21}}} \label{thm:proper-tate-functoriality}
    There exists an oplax functor $\tau \colon \scQ \to \Fun(\CAlg, \CAlg)$. 
    On objects, $\tau$ sends a finite group $G$ to the functor $(-)^{\tau G}$, taken with respect to the trivial action.

    A surjection $H \twoheadleftarrow G$ is sent to the canonical map $\can_H^G \colon (-)^{\tau H} \Rightarrow (-)^{\tau G}$. An injection $H \hookrightarrow G$ is sent to the relative Frobenius map $\phi_H^G \colon (-)^{\tau H} \Rightarrow (-)^{\tau G}$.
\end{thm}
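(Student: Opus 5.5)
This statement is cited from \cite[Theorem 3.15]{Yuan21}, so the most economical route is to verify that Yuan's construction specializes to the description given here. I would still organize the argument so that its content is visible. The plan is to build $\tau$ from two simpler pieces. The first is the functoriality of the proper Tate construction in surjections, which produces the maps $\can$. The second is the Tate diagonal, which produces the relative Frobenius maps. Both pieces are then assembled into a single oplax functor on the span category $\scQ$.

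\textbf{Surjections.} For a surjection $\pi\colon G \twoheadrightarrow H$ with kernel $N$, I would first construct $\can_H^G$ on a ring $R$ with trivial action. One takes the restriction (inflation) map $R^{hH} \to R^{hG}$, i.e. pullback along $BG \to BH$. Then one checks that the composite $R^{hH'} \to R^{hH} \to R^{hG} \to R^{\tau G}$ vanishes for every proper $H' \subsetneq H$. This holds because the transfer from $H'$ pulls back to the transfer from the proper subgroup $\pi^{-1}(H')$, by the double coset formula for transfers along a surjection. Since inflation and transfer are maps of $R^{hH}$-modules, a lax monoidal structure passes to the cofiber, and these maps upgrade to natural maps of $\CAlg$'s. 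Their compatibility with composition of surjections is strict.

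\textbf{Injections.} For an injection $H \hookrightarrow G$, I would define $\phi_H^G$ as the composite
\[
R^{\tau H} \xrightarrow{\Delta} \left((R^{\tau H})^{\otimes G/H}\right)^{\tau G/H\text{-twisted}} \to R^{\tau G}.
\]
More concretely, $\phi_H^G$ is induced from the norm $N_H^G$ (the multiplicative induction) on $H$-equivariant objects. This requires the Tate diagonal $X \to (X^{\otimes G})^{\tau G}$, which is referred to in \cref{cor:tate-diagonal}, and the lax symmetric monoidality in \cref{prop:proper-tate-is-lax}. The key input is that the Hill--Hopkins--Ravenel norm $N_H^G$, followed by geometric-fixed-point-type quotienting, kills the images of the transfers. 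This is the genuine-equivariant statement that $\Phi^G N_H^G X \simeq \Phi^H X$, which is precisely what makes $\tau$ interact well with norms.

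\textbf{Assembly and main obstacle.} The main obstacle is the coherence: $\tau$ must respect composition in $\scQ$, up to the oplax $2$-cells. Composing a span means forming a pullback of an injection against a surjection, and I would prove the required Beck--Chevalley-type comparison $\can\circ\phi \Rightarrow \phi\circ\can$ in two stages. First I would do it on homotopy fixed points, using the double coset formula for norms. Then I would pass to cofibers. Here only a $2$-cell, not an equivalence, is expected, because proper subgroups of the pullback can map onto the whole group. Because $\scQ$ is a $1$-category, only finitely many coherences are needed, so this part is checkable. In practice I would import the whole structure from Yuan's genuine-equivariant model, in which $R$ is regarded as the Borel-complete (or inflated) $G$-spectrum. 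In that model $(-)^{\tau G}$ is geometric fixed points of the Borel completion, and the oplax functoriality follows from the functoriality of $\Phi$ under inflation together with the norm.
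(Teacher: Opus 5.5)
The paper gives no argument of its own for this statement; it simply imports it from \cite[Theorem 3.15]{Yuan21}. Your eventual move is to take the whole structure from the genuine-equivariant model: there $(-)^{\tau G}$ is $\Phi^G$ of the Borel-complete $G$-spectrum $\underline{R}_G$, $\can$ comes from inflation via $\Phi^G\circ\mathrm{infl}_H^G\simeq\Phi^H$, and $\phi_H^G$ comes from the norm via $\Phi^G N_H^G\simeq\Phi^H$. That is the same route as the paper, and the picture is right.

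However, your sketch makes one claim that is wrong and that matters for how the statement is used later. Take $G\hookrightarrow K$, a surjection $\pi\colon G'\twoheadrightarrow K$, and $P=\pi^{-1}(G)$. You say the comparison between $\can_K^{G'}\circ\phi_G^K$ and $\phi_P^{G'}\circ\can_G^P$ is only a non-invertible $2$-cell. But in the $\infty$-category $\Fun(\CAlg,\CAlg)$, a $2$-cell between two natural transformations $(-)^{\tau G}\Rightarrow(-)^{\tau G'}$ is a homotopy, so there is no room for a non-invertible one. Moreover, the two composites do agree:
\begin{itemize}
\item both are $\Phi^{G'}$ applied to a map $\pi^{*}N_G^K\underline{R}_G\simeq N_P^{G'}(\pi|_P)^{*}\underline{R}_G\to\underline{R}_{G'}$, where the equivalence is base change for norms, using $G'/P\cong K/G$ as $G'$-sets;
\item since the target is Borel complete, such a map is determined by its underlying $G'$-equivariant map $R^{\otimes K/G}\to R$, which is the multiplication in both cases.
\end{itemize}
This homotopy is exactly what \cref{prop:composition-of-frobeni} uses when it deduces commutativity of a square from equality of composites in $\scQ$; a genuinely lax exchange would invalidate that step. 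Your stated reason, that proper subgroups of $P$ can surject onto $G$, only shows that $\can_G^P$ need not be an equivalence. It says nothing about whether the two composites are homotopic.

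There are three smaller problems.
\begin{itemize}
\item You claim that, because $\scQ$ is a $1$-category, ``only finitely many coherences are needed.'' This is false for a functor into an $\infty$-category: you need coherent homotopies over simplices of $N(\scQ)$ in every degree. Checking compositions only produces a functor to $h\Fun(\CAlg,\CAlg)$. That is enough for the paper's application, but it still requires the exchange homotopy above. Getting all higher coherences at once is precisely what the genuine-equivariant functoriality in inflations and norms provides.
\item The $\EE_{\infty}$-structure on $R^{\tau G}$, and the fact that $\can$ is a map in $\CAlg$, do not follow from inflation and transfer being $R^{hH}$-module maps, since a cofiber of modules is only a module. You need that $\widetilde{E}\mathcal{P}$ is an idempotent commutative algebra, or the Verdier-quotient argument of \cref{prop:proper-tate-is-lax}.
\item Your first displayed formula for $\phi_H^G$ is not meaningful, because $G/H$ is only a $G$-set when $H$ is not normal. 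The correct composite is $R^{\tau H}\simeq\Phi^G N_H^G\underline{R}_H\to(R^{\otimes G/H})^{\tau G}\xrightarrow{m}R^{\tau G}$, which is what your ``more concretely'' sentence describes.
\end{itemize}
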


In the case of complex-oriented ring spectra, the proper Tate construction admits a simple algebraic description:

\begin{thm} [{\cite[Proposition 5.10]{AMR23}}] 
    \label{thm:tate-of-complex-oriented}
    Let $R$ be a complex-oriented ring spectrum and let $G$ be a finite group. Then the canonical map $R^{hG} \to R^{\tau G}$ exhibits $R^{\tau G}$ as the localization of $R^{hG}$ with respect to the Euler class of the reduced complex regular representation of $G$. That is, if $e = e(\widetilde{\rho _G})$, then $R^{\tau G} = R^{hG}[e^{-1}]$. 
\end{thm}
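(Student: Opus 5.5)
This is a result quoted from \cite[Proposition 5.10]{AMR23}, so the plan is to reconstruct the standard argument. The idea is to use the cofiber description of $R^{\tau G}$ from \cref{def:proper-tate} and show two things: inverting $e = e(\widetilde{\rho_G})$ kills exactly the transfer part, and the map $R^{hG} \to R^{hG}[e^{-1}]$ factors through the cofiber and induces an equivalence.

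\textbf{Geometric reformulation.} Write $\widetilde{\rho}_G$ for the reduced complex regular representation, and let $S(\infty\widetilde{\rho}_G)$ be the colimit of its unit spheres. This is a $G$-space whose fixed points for $H$ are empty exactly when $H = G$, and contractible otherwise. So $S(\infty\widetilde{\rho}_G)$ is a model for $E\mathcal{P}$, the classifying space of the family $\mathcal{P}$ of proper subgroups. Concretely:
\begin{enumerate}
\item[(a)] I identify $\colim_{G/H\in\Orb_G,\,H\neq G} X^{hH}$ with the $G$-homotopy orbits $(E\mathcal{P}_+\otimes X)^{hG}$, using the Borel/genuine comparison. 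This is the standard identification of the proper Tate construction as $(\widetilde{E\mathcal{P}}\otimes X)^{hG}$, where $\widetilde{E\mathcal{P}}=\colim_k S^{k\widetilde{\rho}_G}$.
\item[(b)] This gives $R^{\tau G}\simeq \colim_k (S^{k\widetilde{\rho}_G}\otimes R)^{hG}$, with transition maps given by multiplication by the Euler class $S^0\to S^{\widetilde{\rho}_G}$.
\end{enumerate}

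\textbf{Thom isomorphism.} Since $R$ is complex oriented, the Thom isomorphism gives $(S^{k\widetilde{\rho}_G}\otimes R)^{hG}\simeq \Sigma^{2k(|G|-1)}R^{hG}$ as $R^{hG}$-modules. This uses that $\widetilde{\rho}_G$ is a complex bundle over $BG$, twisted into the Borel construction. Under this identification, the map induced by $S^0\to S^{\widetilde{\rho}_G}$ becomes multiplication by $e\in \pi_{-2(|G|-1)}R^{hG}$. The colimit is therefore the telescope of multiplication by $e$, which is $R^{hG}[e^{-1}]$. Compatibility with the canonical map $R^{hG}\to R^{\tau G}$ is clear at stage $k=0$.

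\textbf{Main obstacle.} The hard step is (a): matching the colimit over the orbit category of \cref{def:proper-tate} with the homotopy fixed points of $E\mathcal{P}_+\otimes X$. The category $\Orb_G$ is not discrete, and the transfers there are the Borel transfers. I would first try to prove it by induction on cells: $E\mathcal{P}$ has a $G$-CW structure with cells of type $G/H$, $H$ proper. Each such cell contributes $(G/H_+\otimes X)^{hG}\simeq X^{hH}$, mapping to $X^{hG}$ by the transfer. Comparing the cell filtration with the orbit-category colimit via the Elmendorf-type description of $E\mathcal{P}$ as $\colim_{\Orb_{G,\mathcal P}}G/H$ should give the equivalence. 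Here I need care with the fact that $(-)^{hG}$ commutes with this colimit only because $X^{hG}$ is exact and the relevant cofiber sequences are finite in each skeleton. The orientation step is routine once the representation sphere $S^{\widetilde{\rho}_G}$ is known to be $R$-orientable in the Borel sense, which holds for any complex representation.
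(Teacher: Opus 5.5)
The paper does not prove this statement; it is quoted from \cite[Proposition 5.10]{AMR23}, so there is no internal argument to compare against. Your outline is the standard argument: isotropy separation with $\widetilde{E\mathcal{P}}\simeq\colim_k S^{k\widetilde{\rho}_G}$, then a complex orientation to turn the tower into the $e$-telescope. Formula (b), the fixed-point computation for $S(\infty\widetilde{\rho}_G)$, and the Thom-isomorphism step are correct. To see that \emph{every} transition map is multiplication by $e$, you also need multiplicativity of Thom classes, $u_{k+1}=u_k\,u_1$. However, step (a) is false as written. The justification in your last paragraph, that $(-)^{hG}$ commutes with the colimit over skeleta, is also unavailable.

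In $\Sp^{BG}$ equivalences are detected on underlying spectra. For $G\neq 1$ the space $E\mathcal{P}$ is nonequivariantly contractible, since the trivial subgroup is proper. Hence $E\mathcal{P}_+\otimes X\simeq X$ and $\widetilde{E\mathcal{P}}\otimes X\simeq 0$ as Borel spectra. So the Borel homotopy fixed points $(E\mathcal{P}_+\otimes X)^{hG}$ and $(\widetilde{E\mathcal{P}}\otimes X)^{hG}$ are just $X^{hG}$ and $0$. In particular $(-)^{hG}$ does not commute with $\colim_k$: $(\colim_k S^{k\widetilde{\rho}_G}\otimes R)^{hG}=0$, while $\colim_k (S^{k\widetilde{\rho}_G}\otimes R)^{hG}$ is usually nonzero (e.g.\ $KU^{tC_p}\neq 0$). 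Exactness of $(-)^{hG}$ only handles finite colimits, so the infinite colimit (over $k$, or over skeleta) must stay outside the homotopy fixed points.

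The correct form of (a) uses genuine fixed points of the cofree spectrum: $\colim_{\Orb_{\mathcal P}}X^{hH}\simeq (E\mathcal{P}_+\otimes F(EG_+,X))^G$. This reduces to Borel fixed points only on finite $G$-CW complexes. If you want to avoid genuine spectra, argue as follows.

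Let $\Phi_X\colon \Fun(\Orb_G^{\mathrm{op}},\scS)\to\Sp$ be the colimit-preserving extension of $G/H\mapsto X^{hH}$, covariant via transfers. By co-Yoneda, $\Phi_X(E\mathcal{P})\simeq\colim_{\Orb_{\mathcal P}}X^{hH}$, and it maps to $\Phi_X(G/G)=X^{hG}$ by the canonical map. For a finite $G$-CW complex $Z$ one has $\Phi_X(Z)\simeq(\Sigma^\infty_+Z\otimes X)^{hG}$ naturally. This holds because both sides preserve finite colimits in $Z$ and agree on orbits via $(\Sigma^\infty_+G/H\otimes X)^{hG}\simeq X^{hH}$.

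Now $E\mathcal{P}\simeq\colim_k S(k\widetilde{\rho}_G)$ as presheaves on $\Orb_G$ (check on fixed points), and each $S(k\widetilde{\rho}_G)$ is a finite $G$-CW complex. This gives $\colim_{\Orb_{\mathcal P}}X^{hH}\simeq\colim_k (S(k\widetilde{\rho}_G)_+\otimes X)^{hG}$ over $X^{hG}$. Taking cofibers levelwise along $S(k\widetilde{\rho}_G)_+\to S^0\to S^{k\widetilde{\rho}_G}$ then yields exactly your (b). This is your cell-by-cell idea, with the colimit kept outside. With (a) replaced by this, the rest of your argument goes through.
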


\section{Construction of the Functor $\Alg_{\delta} \to \Sh(\Def, \Alg)$}
\label{section:construction}

In this section, we construct our functor $\Alg_{\delta, \text{tf}} \to \Alg_{\TT}$. Recall that there is a functor $\Alg_{\TT} \to \Sh(\Def, \Alg)$, which, when restricted to $p$-torsion free algebras, is fully faithful and has a well-understood essential image. 
Specifically, we construct a functor $\Alg_{\delta} \to \Sh(\Def, \Alg)$, and show that it lands in the subcategory of $\TT$-algebras for torsion free $\delta$-rings. This will be good enough for us, as in the case of a $p$-adically flat ring spectrum $R$, $\pi_0 R$  is $p$-torsion free (as it is a $p$-adically flat $\ZZ_p$-module). 

First, it will be useful for us to construct a functor from the category of rings equipped with an endomorphism $\Alg_{\psi}$ to the category $\Sh(\Def,\Alg)$.

\begin{construction} \label{const:sheaf-on-def}
    Let $A$ be a ring equipped with an endomorphism $\psi \colon A \to A$. We define an object $B^A \in \Sh(\Def, \Alg)$. For a ring $R \in \scR$ (i.e., $R$ is a local ring and $R/m_{R}$ has characteristic $p$), and a deformation $(G, i, \alpha)$ over $R$, we define $B^A_R(G) :=  A \cotimes R$, where the tensor product is completed with respect to the maximal ideal of $R$.
    
    For a morphism $f \colon G \to G'$ which deforms $\Frob^r$, we define $B^A_R(f) := \psi^r \cotimes \Id_R$.

    For a map $f \colon R \to R'$, we must give a natural isomorphism $B^A_f \colon f^* B_R^A \to B_{R'}^A f^*$. For a deformation $(G, i, \alpha)$ over $R$, we have 
    \[
    (f^* B_R^A)(G) = f^*(A \cotimes R) = (A \cotimes R) \cotimes[R] R'
    \]
    We also have, $(B_{R'}^A f^*)(G) = A\cotimes R'$. These are naturally isomorphic, as \[
    (A \cotimes R) \cotimes[R] R' \cong A \cotimes (R \cotimes[R] R') \cong A \cotimes R'
    \]

\end{construction}

\begin{prop}
    $B^A_{R} \colon \Def_{R}^{\op} \to \Alg_R$ is functorial.
\end{prop}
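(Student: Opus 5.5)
To show that $B^A_R$ is a functor $\Def_R^{\op} \to \Alg_R$, I will check three things: that it sends each morphism to an $R$-algebra map in the correct direction, that it preserves identities, and that it preserves composition. All of this reduces to the grading of morphisms in $\Def_R$ together with the fact that $\psi$ is a ring endomorphism.

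The key structural observation is that the value on objects, $B^A_R(G) = A \cotimes R$, does not depend on $G$. Likewise, the value on a morphism $f$ depends only on its degree $r$, that is, on which $\Frob^r$ it deforms. So $B^A_R$ factors through the functor $\Def_R^{\op} \to B\NN$ that sends every object to the single object and a morphism of degree $r$ to $r \in \NN$. It is then enough to check two things:
\begin{enumerate}
    \item this degree assignment is a functor, i.e.\ degrees add under composition;
    \item $r \mapsto \psi^r \cotimes \Id_R$ is a functor $B\NN \to \Alg_R$, i.e.\ a monoid map $\NN \to \End_{\Alg_R}(A \cotimes R)$.
\end{enumerate}
Note that $\NN$ is commutative, so passing to the opposite category is harmless.

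For the first point, let $f \colon G \to G'$ deform $\Frob^r$ and $g \colon G' \to G''$ deform $\Frob^s$. I claim $g \circ f$ deforms $\Frob^{r+s}$. On embeddings we have $i \circ \phi^{r+s} = (i \circ \phi^r)\circ \phi^s = i' \circ \phi^s = i''$. For the diagram condition, paste the two commuting squares and use
\[
(\phi^r)^*\Frob^s \circ \Frob^r = \Frob^{r+s}
\]
for relative Frobenii. The identity of $G$ deforms $\Frob^0 = \Id$. Since the degree is part of the data of a morphism, as the paper stipulates, this shows that composition in $\Def_R$ respects the grading.

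For the second point:
\begin{itemize}
    \item $\psi^r \cotimes \Id_R$ is a well-defined $R$-algebra endomorphism of $A \cotimes R$. The map $\psi^r \otimes \Id_R$ is a ring map that is $R$-linear, and it is continuous for the $\frakm_R$-adic topology because it fixes $R$ and hence preserves $\frakm_R$-adic filtrations. It therefore extends to the completion.
    \item $\psi^0 \cotimes \Id_R = \Id$.
    \item $(\psi^s \cotimes \Id_R)\circ(\psi^r \cotimes \Id_R) = \psi^{r+s} \cotimes \Id_R$. This identity holds on the dense image of $A \otimes R$, so it holds on the completion by continuity.
\end{itemize}
Contravariance is automatic, since $B^A_R(g\circ f) = B^A_R(f)\circ B^A_R(g)$ and both sides equal $\psi^{r+s} \cotimes \Id_R$.

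The only step needing any care is the pasting of Frobenius squares, which requires the identity $(\phi^r)^*\Frob^s \circ \Frob^r = \Frob^{r+s}$. This follows directly from the definition of the relative Frobenius as the map induced by the absolute Frobenius $\phi$: in coordinates, both sides correspond to $t \mapsto t^{p^{r+s}}$. Everything else is formal.
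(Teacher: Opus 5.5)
Your proof is correct and follows essentially the same route as the paper. Like the paper, you check that identities go to $\psi^0 \cotimes \Id_R = \Id$, paste the two Frobenius squares to see that composing deformations of $\Frob^r$ and $\Frob^s$ gives a deformation of $\Frob^{r+s}$, and conclude from $\psi^{r+s} = \psi^s \circ \psi^r$; your factorization through the one-object category $B\ZZ_{\ge 0}$ only repackages this, though it usefully makes explicit two points the paper leaves unstated (that $\psi^r \cotimes \Id_R$ extends to the completion, and that contravariance is harmless because powers of $\psi$ commute).
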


\begin{proof}
Let us verify that $B^A_R \colon \Def_R^{\op} \to \Alg_R$ is a well defined functor. First, $B^A(\Id_G) = \psi^0 \cotimes \Id_R = \Id_{A \cotimes R}$. 

Next, we need to check that $B^A_R(f' \circ f) = B^A_R(f') \circ B_R^A(f)$ for $f \colon G \to G'$ and $f' \colon G' \to G''$. If $f$ deforms $\Frob^r$ and $f'$ deforms $\Frob^{r'}$, then $i' = i\circ \phi^{r}$ and $i'' = i'\circ \phi^{r'}$, so $i'' = i \circ \phi^{r + r'}$. Moreover, we have the following commutative diagram:

\[
\begin{tikzcd}
i^* \Gamma  \arrow[r, "\Frob^r"] \arrow[d, "\alpha"] & (\phi^r)^* i^* \Gamma \arrow[d, "\alpha' "] \arrow[r, "\Frob^{r'}"] & (\phi^{r + r'})^* i^* \Gamma \arrow[d, "\alpha'' "] \\
p^*G \arrow[r, "f"] & (p')^* G' \arrow[r, "f'"] & (p'')^* G
\end{tikzcd}
\]

The composition of the top two arrows is exactly $\Frob^{r+r'}$; thus $f' \circ f$ deforms $\Frob^{r+r'}$. Hence 
\[
B^A_R(f' \circ f) = \psi^{r+r'} \cotimes \Id_R = (\psi^{r'} \cotimes \Id_R) \circ (\psi^{r} \cotimes \Id_R) =
B^A_R(f') \circ B_R^A(f)
\]
\end{proof}

\begin{prop}
    The sheaf $B^A$, constructed \cref{const:sheaf-on-def}, satisfies the requisite coherence conditions. 
\end{prop}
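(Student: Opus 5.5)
The plan is to unwind what ``coherence'' means for an object of $\Sh(\Def,\Alg)$ and check each condition directly from \cref{const:sheaf-on-def}. There are three things to verify for the data $(B^A_R, B^A_f)$.

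First, each $B^A_f$ must be a natural transformation of functors $\Def_R^{\op}\to\Alg_{R'}$. Second, the isomorphisms must be compatible with composition: for $R\xrightarrow{f}R'\xrightarrow{g}R''$ we need $B^A_{g\circ f}=(B^A_g f^*)\circ(g^*B^A_f)$, modulo the canonical identification $(gf)^*\cong g^*f^*$. Third, $B^A_{\Id_R}$ must be the identity.

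For naturality, take a morphism $h\colon G\to G'$ in $\Def_R$ deforming $\Frob^r$. Its base change $f^*h\colon f^*G\to f^*G'$ again deforms $\Frob^r$, since base change preserves the grading and the diagram in the definition of a deformation of Frobenius pulls back along $R/\frakm_R\to R'/\frakm_{R'}$. Hence $B^A_{R'}(f^*h)=\psi^r\cotimes\Id_{R'}$, while $f^*B^A_R(h)=(\psi^r\cotimes\Id_R)\cotimes_R\Id_{R'}$. Under the identification
\[
(A\cotimes R)\cotimes[R]R'\cong A\cotimes R',
\]
which sends $(a\otimes r)\otimes r'\mapsto a\otimes f(r)r'$, both maps send $(a\otimes r)\otimes r'$ to the class of $\psi^r(a)\otimes f(r)r'$. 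So the square commutes on the dense image of the uncompleted tensor products. It then commutes after completion, because all maps involved are continuous for the $\frakm$-adic topologies.

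For the cocycle condition, both sides are maps
\[
(A\cotimes R)\cotimes[R]R'\cotimes[R']R''\to A\cotimes R''.
\]
Each is determined by the formula $(a\otimes r)\otimes r'\otimes r''\mapsto a\otimes g(f(r)r')r''$, since each $B^A_f$ is just the associativity/unit isomorphism of completed tensor products. They therefore agree on generators and hence everywhere by continuity. The identity condition is immediate, because $B^A_{\Id}$ is the unit isomorphism $(A\cotimes R)\cotimes[R]R\cong A\cotimes R$.

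The only genuinely delicate point is that the graded degree $r$ of a morphism is preserved by base change $f^*$ on $\Def$. This requires checking that $f^*$ carries $i\circ\phi^r=i'$ to the corresponding identity for the induced residue-field embeddings, and carries $\Frob^r$ for $\Gamma$ to itself. Both follow because $\phi$ commutes with every ring map in characteristic $p$. The remaining steps are formal manipulations of completed tensor products, and the density/continuity argument handles completion uniformly.
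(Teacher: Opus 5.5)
Your proposal is correct and follows essentially the same route as the paper. The unit and cocycle conditions hold because each $B^A_f$ is the canonical associativity/unit isomorphism $(A \cotimes R) \cotimes[R] R' \cong A \cotimes R'$, so both sides of the cocycle identity agree under $(gf)^* \cong g^* f^*$. Your extra steps fill in details the paper leaves implicit: that $B^A_f$ is natural in morphisms of $\Def_R$ (using that base change preserves the Frobenius degree $r$), and the density/continuity argument for completed tensor products.
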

\begin{proof} 
First, note that $B^A_{\Id_R} \colon B^A_R \to B^A_R$ is the isomorphism $A \cotimes R \cotimes R \to A \cotimes R$ which is the identity under the identification $R \cotimes R \simeq R$.

Further, suppose that we have $f \colon R \to R'$ and $g \colon R' \to R''$. 
We need to verify that 
\[
B^A_{g \circ f} = B^A_g f^* \circ g^* B^A_f
\]
For a given deformation over $R$, the first transformation gives the isomorphism $ (A \cotimes R) \cotimes[R] R'' \to A \cotimes R''$. The latter gives the composition:
\[
((A \cotimes R) \cotimes[R] R') \cotimes[R'] R'' \xrightarrow[]{\sim} (A \cotimes R') \cotimes[R'] R'' \xrightarrow[]{\sim} A \cotimes R''
\]
Under the identification $(gf)^*(R \cotimes A) \cong g^* f^* (R \cotimes A)$, both maps agree.
\end{proof}

So far we have only defined the action of the functor on objects. Let us now describe the functor on morphisms:

\begin{construction} \label{const:on-maps}
    Let $F \colon A \to A'$ be a map of rings equipped with an endomorphism, i.e., $F \circ \psi_A = \psi_{A'} \circ F$. We construct a transformation $B^A \Rightarrow B^{A'}$.

    For a deformation $G$ over $R$, it gives the map $B^A_R(G) = A \cotimes R \xrightarrow[]{F\cotimes \Id_R} A' \cotimes R$. For a deformation $g \colon G \to G'$ of $\Frob^m$, the following diagram commutes:
    \[\begin{tikzcd}[column sep=large]
	{B^A_R(G) = A \cotimes R} & {B^{A'}_R(G) = A' \cotimes R} \\
	{B^A_R(G) = A \cotimes R} & {B^{A'}_R(G) = A' \cotimes R}
	\arrow["{F \cotimes \Id_R}", from=1-1, to=1-2]
	\arrow["{\psi^r \cotimes \Id_R}"', from=1-1, to=2-1]
	\arrow["{\psi^r \cotimes \Id_R}", from=1-2, to=2-2]
	\arrow["{F \cotimes \Id_R}"', from=2-1, to=2-2]
\end{tikzcd}\]
    Thus this transformation really is natural.
\end{construction}

\begin{claim}
    The transformation defined above $B^A \Rightarrow B^{A'}$ is compatible with the coherence of $B^A$ and $B^{A'}$.
\end{claim}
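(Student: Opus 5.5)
The claim asks that the family of maps $F_R := F \cotimes \Id_R \colon B^A_R \Rightarrow B^{A'}_R$ be a morphism in $\Sh(\Def, \Alg)$. Concretely, for every map of rings $f \colon R \to R'$ in $\scR$ the following square of natural transformations $f^* B^A_R \Rightarrow B^{A'}_{R'} f^*$ must commute:
\[
B^{A'}_f \circ f^*(F_R) \;=\; (F_{R'} f^*) \circ B^A_f .
\]
My plan is to check this objectwise. Fix a deformation $(G,i,\alpha)$ over $R$. Both sides are then ring maps
\[
(A \cotimes R) \cotimes[R] R' \longrightarrow A' \cotimes R',
\]
and since the claim is an equality of natural transformations, nothing has to be checked on morphisms of $\Def_R$.

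The first step is to unwind the two composites using \cref{const:sheaf-on-def}. The left-hand side first applies $(F \cotimes \Id_R) \cotimes[R] \Id_{R'}$, landing in $(A' \cotimes R) \cotimes[R] R'$. It then applies the canonical isomorphism $B^{A'}_f$ to $A' \cotimes R'$, which sends $(a' \otimes r) \otimes r' \mapsto a' \otimes f(r) r'$. The right-hand side first applies $B^A_f$, sending $(a \otimes r)\otimes r' \mapsto a \otimes f(r)r'$, and then applies $F \cotimes \Id_{R'}$. On elementary tensors both composites send $(a \otimes r) \otimes r'$ to $F(a) \otimes f(r) r'$, so they agree there.

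The second step is to pass from elementary tensors to the whole ring. Both composites are continuous $R'$-algebra maps for the $\frakm_{R'}$-adic topology, and the images of elementary tensors generate a dense subring of the source. Two continuous maps into the complete (Hausdorff) target that agree on a dense subset are equal.

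The only point requiring any care is this completion step, and specifically the continuity of the maps involved. This is routine: $F \cotimes \Id_R$ is by construction the completion of $F \otimes \Id_R$, and the coherence isomorphisms are the completions of the associativity isomorphisms $(A \otimes R)\otimes_R R' \cong A \otimes R'$. Equivalently, one can run the entire argument before completion, where the identity is just naturality of the associativity isomorphism of tensor products in the first variable, and then complete. I would present it that way: naturality of the canonical isomorphism $(A \otimes R)\otimes_R R' \cong A \otimes R'$ in $A$, applied to $F$, followed by $\frakm_{R'}$-adic completion.
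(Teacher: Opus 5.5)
Your proposal is correct and is essentially the paper's argument. The paper likewise reduces to checking the square componentwise on a deformation $G$ over $R$, where it commutes because the identification $(A \cotimes R) \cotimes[R] R' \cong A \cotimes R'$ is natural in $A$. You additionally spell out the density and continuity step that the paper calls immediate; this step is harmless, since all the maps involved are $R'$-linear and hence automatically $\frakm_{R'}$-adically continuous.
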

\begin{proof}
    Let $f \colon R \to R'$ be a map of rings in $\scR$, and we need to show that the following diagram commutes

\[\begin{tikzcd}
	{f^*B^A_R} & {B^A_{R'}f^*} \\
	{f^*B^{A'}_R} & {B^{A'}_{R'}f^*}
	\arrow["\sim", from=1-1, to=1-2]
	\arrow[from=1-1, to=2-1]
	\arrow[from=1-2, to=2-2]
	\arrow["\sim", from=2-1, to=2-2]
\end{tikzcd}\]

Indeed, it is enough to see this for objects, which is immediate, as for a deformation $G$ over $R$, the following diagram commutes

\[\begin{tikzcd}[column sep=large]
	{(A \cotimes R) \cotimes[R] R'} & {A \cotimes R'} \\
	{(A' \cotimes R) \cotimes[R] R'} & {A \cotimes R'}
	\arrow["\sim", from=1-1, to=1-2]
	\arrow["{(F \cotimes \Id_R) \cotimes \Id_{R'}}"', from=1-1, to=2-1]
	\arrow["{F \cotimes \Id_{R'}}", from=1-2, to=2-2]
	\arrow["\sim", from=2-1, to=2-2]
\end{tikzcd}\]

\end{proof}

\begin{rmrk} \label{rmrk:maps-of-gamma-rings}
    By coherence, the transformation $B^A \Rightarrow B^{A'}$ is determined uniquely by the map of the underlying rings, $B^A_{E_0}(\GG) \to B^A_{E_0}(\GG)$, where $\GG$ is the universal deformation over $E_0$. This is just a reflection of the fact that a map of $\Gamma$-algebras is a map of the underlying rings. In our case, the map $A \to A'$ is mapped to its base change $A \cotimes E_0 \to A' \cotimes E_0$.
\end{rmrk}

\noindent Thus we have a functor $\Alg_{\psi} \to \Sh(\Def, \Alg)$. We can compose this functor with the functor $\Alg_{\delta} \to \Alg_{\psi}$ which associates to a $\delta$-ring the lift of Frobenius $\psi(x) = x^p + p\delta(x)$. This gives us our desired functor $\Alg_{\delta} \to \Sh(\Def, \Alg)$.

Later in the paper we will show the functor makes diagram (\ref{diag:dotted-functor}) commute (more precisely, if we compose the missing functor with the functor $\Alg_{\TT} \to \Sh(\Def, \Alg)$ we will get the functor we have just described). From this it follows that the resulting sheaf $B^A$ must satisfy the Frobenius congruence condition. Nonetheless, it is easy to check explicitly that the condition is satisfied.

\begin{prop}
    The sheaf $B^A$, constructed in \cref{const:sheaf-on-def}, satisfies the congruence condition, provided $\psi:A\to A$ is a lift of Frobenius. 
\end{prop}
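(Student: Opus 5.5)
We verify the congruence diagram directly. Fix $R \in \scR$ of characteristic $p$ and a deformation $G$ over $R$. The first task is to identify the three maps concretely. We have $B^A_R(G) = A \cotimes R$, and since $R$ has characteristic $p$ and is complete, $A \cotimes R = (A/p) \cotimes_{\FF_p} R$. The base change along the absolute Frobenius $\phi$ is
\[
\phi^* B^A_R(G) = (A \cotimes R) \otimes_{R,\phi} R .
\]
The coherence isomorphism $B^A_\phi \colon \phi^*(A \cotimes R) \to A \cotimes R = B^A_R(\phi^*G)$ is the one from \cref{const:sheaf-on-def}. It is given by $(a \otimes r) \otimes s \mapsto a \otimes \phi(r)s = a \otimes r^p s$. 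The relative Frobenius $\Frob \colon G \to \phi^*G$ deforms $\Frob^1$ (in fact it is $\Frob$ itself), so by construction $B^A_R(\Frob) = \psi \cotimes \Id_R$. Finally, the algebra Frobenius $\Frob \colon \phi^*(A\cotimes R) \to A \cotimes R$ sends $(x) \otimes s \mapsto s x^p$ for $x \in A \cotimes R$.

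Next we compare the two composites on generators $(a \otimes r) \otimes s$.
\begin{itemize}
\item Going through $B^A_\phi$ and then $B^A_R(\Frob)$ gives
\[
(a\otimes r)\otimes s \;\mapsto\; a \otimes r^p s \;\mapsto\; \psi(a) \otimes r^p s .
\]
\item The diagonal map gives
\[
(a\otimes r)\otimes s \;\mapsto\; s\,(a \otimes r)^p = a^p \otimes r^p s .
\]
\end{itemize}
Since $\psi$ is a lift of Frobenius, $\psi(a) - a^p \in pA$. Because $p = 0$ in $R$, we get $\psi(a) \otimes r^p s = a^p \otimes r^p s$ in $A \cotimes R$, so the two composites agree on these elements.

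It then remains to pass from generators to the whole ring. Both composites are ring homomorphisms, indeed $R$-algebra maps for the appropriate structures, and they are continuous for the $\frakm_R$-adic topology. Elements of the form $(a \otimes r) \otimes s$ generate a dense subring of $\phi^*(A \cotimes R)$, so agreement on them gives agreement everywhere.

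The main obstacle is bookkeeping rather than mathematics. We must pin down the coherence isomorphism $B^A_\phi$ precisely, check that the completions in $\phi^*(A\cotimes R)$ behave well, and confirm that the relative Frobenius of $G$ is indeed the morphism of $\Def_R$ deforming $\Frob^1$. The last point holds because the relevant square reduces to the identity on $i^*\Gamma$. If completion causes trouble, we would first verify the identity on the uncompleted tensor product and then extend by continuity.
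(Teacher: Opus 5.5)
Your proof is correct and follows the paper's argument essentially verbatim. Like the paper, you evaluate both composites on elementary tensors $(a\otimes r)\otimes s$, obtaining $\psi(a)\otimes r^p s$ and $a^p\otimes r^p s$, and you identify them because $\psi(a)-a^p\in pA$ vanishes in $A\cotimes R$ when $p=0$ in $R$. Your continuity and density remark makes explicit the passage from elementary tensors to all of $\phi^*(A\cotimes R)$, which the paper leaves implicit.
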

\begin{proof}

Suppose that $\psi \colon A \to A$ is a lift of the Frobenius endomorphism, i.e., $\psi(x) \equiv x^p \mod{pA}$. Suppose $R$ is a ring of characteristic $p$, and let $\phi \colon R \to R$ be the absolute Frobenius endomorphism. Let $G \in \Def_R$. Then:
\[
\phi^*B^A_R(G) = \phi^*(A \cotimes R) = (A \cotimes R) \cotimes[R] R_0
\]
Where $R_0 = R$ denotes the $R$-module structure given by $\phi$. 

We need to check the commutativity of the following diagram:
\[
\begin{tikzcd}[row sep=large, column sep = large]
   (A \cotimes R) \cotimes[R] R_0   \arrow[r , "\sim"'] \arrow[rd, "\Frob"'] & A \cotimes R \arrow[d, "B_R(\Frob) = \psi \cotimes \Id_R"] \\ 
    & A \cotimes R
\end{tikzcd}
\]
The top morphism is given by  $a \otimes r \otimes r_0 \mapsto a \otimes (r_0 r^p)$. Hence the composition of this map with $\psi \cotimes \Id_R$ is given by $a \otimes r \otimes r_0 \mapsto \psi(a) \otimes (r_0 r^p)$.

The relative Frobenius map is given by $a \otimes r \otimes r_0 \mapsto r_0(a \otimes r)^p$. Since $R \otimes pA = 0$, as $R$ has characteristic $p$, we have:
\[
r_0 (a \otimes r)^p = r_0 (a^p \otimes r^p) = r_0(\psi(a) \otimes r^p) = \psi(a) \otimes (r_0 r^p) 
\]
And hence the diagram commutes.

\end{proof}

\begin{cor}
    The construction $A \mapsto B^A$, along with the action on maps described in \cref{const:on-maps}, defines a functor $\Alg_{\delta} \to \Sh(\Def, \Alg)$. Restricted to $p$-torsion free rings, this functor lands in the full subcategory of $\TT$-algebras. 
\end{cor}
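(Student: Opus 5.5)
The corollary follows by assembling the preceding results. Only the second sentence needs real argument: that for $p$-torsion free $A$ the sheaf $B^A$ lies in the full subcategory of $\TT$-algebras.

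First, functoriality of $A \mapsto B^A$ on $\Alg_{\psi}$. The proposition after \cref{const:sheaf-on-def} shows that each $B^A_R$ is a functor $\Def_R^{\op} \to \Alg_R$. The following proposition gives the coherence isomorphisms. \cref{const:on-maps} and the subsequent claim show that a map $F \colon A \to A'$ commuting with $\psi$ gives a morphism $B^A \Rightarrow B^{A'}$ in $\Sh(\Def,\Alg)$. Identities and composites are preserved, since on each component the map is $F \cotimes \Id_R$, and $(F' \circ F)\cotimes \Id_R = (F'\cotimes \Id_R)\circ(F\cotimes\Id_R)$. This gives a functor $\Alg_{\psi} \to \Sh(\Def,\Alg)$.

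Next, precompose with the functor $\Alg_{\delta} \to \Alg_{\psi}$, $(A,\delta) \mapsto (A, \psi)$ with $\psi(x) = x^p + p\delta(x)$. This is a ring endomorphism by \cref{subsection:delta-rings}. A map of $\delta$-rings commutes with $\delta$, hence with $x\mapsto x^p$ and with $\psi$, so this really is a functor. The composite is the desired functor $\Alg_\delta \to \Sh(\Def,\Alg)$.

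Finally, the landing claim. Restrict to $p$-torsion free $\delta$-rings $A$. Then $\psi$ is a lift of Frobenius, since $\psi(x)\equiv x^p \pmod{pA}$. By the preceding proposition, $B^A$ satisfies the Frobenius congruence condition. Rezk's characterization, recalled in \cref{subsection:morava-e-theory}, identifies the essential image of $p$-torsion free $\TT$-algebras in $\Sh(\Def,\Alg)$ with the sheaves satisfying the congruence condition whose underlying ring is $p$-torsion free. So we must also check torsion freeness of the underlying ring $B^A_{E_0}(\GG) = A \cotimes E_0$. It is completed with respect to $\frakm_{E_0}$, and $E_0$ is $\ZZ_p$-flat.

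This last check is the main obstacle. I expect torsion freeness to follow because $p$ lies in $\frakm_{E_0}$ and completions of $p$-torsion free modules along an ideal containing $p$ stay $p$-torsion free when $A$ is flat over $\ZZ_p$. For a $p$-complete $A$ one would argue via the description of $A$ as a completed free module, as in \cref{lem:flat-equiv}. If one wants to avoid this, it suffices to observe that in the intended applications $A=\pi_0R$ is $p$-adically flat, and \cref{cor:pi-0-of-flat-tensor-E} makes $A\cotimes E_0$ a completed free $E_0$-module, hence torsion free. Since that subcategory is full, morphisms automatically land there, and this completes the argument.
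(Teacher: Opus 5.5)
Your proposal is correct and follows the paper's own route. Functoriality of $A\mapsto B^A$ on $\Alg_{\psi}$ comes from the preceding constructions and coherence checks, you precompose with $\Alg_{\delta}\to\Alg_{\psi}$, and the landing claim comes from the Frobenius congruence proposition together with Rezk's characterization of $p$-torsion free $\TT$-algebras.

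The torsion-freeness of $A\cotimes E_0$ that you flag is left implicit in the paper, and it holds for every $p$-torsion free $A$, so your fallback to $A=\pi_0R$ is unnecessary. The reason is not that $p\in\frakm_{E_0}$: it is that $A\otimes E_0$ is flat over the Noetherian ring $E_0$, and by Artin--Rees $p$ remains a nonzerodivisor on its $\frakm_{E_0}$-adic completion.
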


\section{Proper Tate Construction}
\label{section:proper-tate}

In this section we establish several useful facts about the proper Tate construction, defined in \cref{def:proper-tate}. 
In \cref{subsection:functorial-properties} we expand on the functoriality of the construction. In \cref{subsection:tate-diagonal} we introduce the useful Tate diagonal map. Finally, in \cref{subsection:gen-frob} we specialize to $p$-adically flat ring spectra, where the Tate diagonals interact well with one another.

\subsection{Exactness \& Multiplicativity} \label{subsection:functorial-properties}

Fix some finite group $G$. 

\begin{prop} \label{prop:tate-is-exact}
    The proper Tate construction $(-)^{\tau G}:\Sp^{BG} \to \Sp$ is exact.
\end{prop}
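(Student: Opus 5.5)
We need to show that $(-)^{\tau G}\colon \Sp^{BG} \to \Sp$ is exact. Since $\Sp^{BG}$ and $\Sp$ are stable $\infty$-categories, it suffices to show that the functor preserves finite colimits. Equivalently, it is enough that it preserves the zero object and cofiber sequences; preservation of finite limits then follows automatically from stability. The plan is to read exactness off \cref{def:proper-tate}, which writes $X^{\tau G}$ as the cofiber of a natural transformation between two functors of $X$. So we check that both functors are exact and then use that cofibers of maps between exact functors are exact.

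\textbf{Step 1: the target.} The homotopy fixed points $X^{hG} = \lim_{BG} X$ form a limit over $BG$, and limits commute with limits. Hence $(-)^{hG}$ preserves finite limits, and in a stable setting this means it is exact. The same argument shows that $(-)^{hH}\colon \Sp^{BG} \to \Sp$ is exact for each subgroup $H$, since it is the composite of restriction $\Sp^{BG}\to\Sp^{BH}$, which is exact, with $\lim_{BH}$.

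\textbf{Step 2: the source.} The source is $X \mapsto \colim_{G/H \in \Orb_G} X^{hH}$, where the diagram runs over proper orbits, i.e.\ $H \neq G$, as the phrase ``proper orbits'' indicates. This colimit is indexed by a fixed small category. The assignment $X \mapsto (G/H \mapsto X^{hH})$ is functorial in $X$, because restriction and transfer maps are natural, and each value is exact by Step 1. In a stable $\infty$-category, colimits commute with finite colimits, so the composite of $\colim$ with this diagram-valued functor is again exact. One point needs care here: the diagram $G/H \mapsto X^{hH}$ must be functorial in $X$ as a functor $\Sp^{BG} \to \Fun(\Orb_G, \Sp)$. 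The cleanest way to arrange this is to write $X^{hH} = (X^{G/H})^{hG}$, using the coinduced object $\mathrm{map}(\Sigma^\infty_+ G/H, X)$. This expresses the whole diagram as a composite of exact constructions. Covariance in the orbit variable via transfers then comes from the Wirthmüller self-duality $D(G/H_+)\simeq G/H_+$ of finite $G$-sets.

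\textbf{Step 3: conclude.} The canonical map from the source to the target is a natural transformation between exact functors. Its cofiber is therefore exact, because cofibers commute with cofiber sequences in $\Fun(\Sp^{BG},\Sp)$, which is itself stable.

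The only potentially subtle step is the naturality bookkeeping in Step 2. If that becomes awkward, an alternative route is available. Identify $(-)^{\tau G}$ with $(\widetilde{E\mathcal{P}} \otimes \underline{X})^G$, the genuine fixed points of the Borel-complete genuine $G$-spectrum tensored with $\widetilde{E\mathcal{P}}$, where $\mathcal{P}$ is the family of proper subgroups. Each of these three operations (Borel completion, tensoring with $\widetilde{E\mathcal{P}}$, taking genuine fixed points) is visibly exact.
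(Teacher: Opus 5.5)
Your proposal is correct and follows essentially the same route as the paper. Both arguments use exactness of $(-)^{hG}$, exactness of the orbit-indexed colimit (because $X \mapsto (G/H \mapsto X^{hH})$ is pointwise exact into the stable category $\Fun(\Orb_G,\Sp)$ and $\colim$ is exact), and then pass to the cofiber, which the paper phrases as the nine-lemma applied to a $3\times 3$ diagram; your extra bookkeeping $X^{hH}\simeq (X\otimes\Sigma^\infty_+ G/H)^{hG}$, which makes the orbit diagram a bifunctor with transfers as its covariant maps, makes precise a point the paper settles only by saying transfers are natural.
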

\begin{proof}
   Suppose $X \to Y \to Z$ is a (co)fiber sequence of spectra with $G$-action. Then, since transfer maps are natural, we have the following commutative diagram:
   \[\begin{tikzcd}
	{\colim_{G/H \in \Orb_{G}} X^{hH}} & {\colim_{G/H \in \Orb_{G}} Y^{hH}} & {\colim_{G/H \in \Orb_{G}} Z^{hH}} \\
	{X^{hG}} & {Y^{hG}} & {Z^{hG}} \\
	{X^{\tau G}} & {Y^{\tau G}} & {Z^{\tau G}}
	\arrow[from=1-1, to=1-2]
	\arrow[from=1-1, to=2-1]
	\arrow[from=1-2, to=1-3]
	\arrow[from=1-2, to=2-2]
	\arrow[from=1-3, to=2-3]
	\arrow[from=2-1, to=2-2]
	\arrow[from=2-1, to=3-1]
	\arrow[from=2-2, to=2-3]
	\arrow[from=2-2, to=3-2]
	\arrow[from=2-3, to=3-3]
	\arrow[from=3-1, to=3-2]
	\arrow[from=3-2, to=3-3]
\end{tikzcd}\]

    The middle row is a cofiber sequence since $(-)^{hG}$ is exact. 
    
    Let us explain why the top row is a cofiber sequence. $X^{h(-)}\colon G/H \mapsto X^{hH}$ is a functor $\Orb_G \to \Sp$, and similarly for $Y$ and $Z$. Since the relative transfer is natural, there are natural transformations $ X^{h(-)} \Rightarrow Y^{h(-)}$ and $Y^{h(-)} \Rightarrow Z^{h(-)}$. By \cite[Proposition 1.1.3.1]{Lur17}, the category $\Fun(\Orb_{G}, \Sp)$ is stable and cofibers are computed objectwise. Since each $X^{hH} \to Y^{hH} \to Z^{hH}$ is a cofiber sequence, so is:
    \[
    X^{h(-)} \Longrightarrow Y^{h(-)} \Longrightarrow Z^{h(-)}
    \]
    Finally, since $\colim \colon \Fun(\Orb_G, \Sp) \to \Sp$ is exact, it follows that the top row is a cofiber sequence.
    
    Hence the bottom row is a cofiber sequence (this is the nine-lemma for stable infinity categories). 
\end{proof}

Recall that the functor $(-)^{hG} \colon \Sp^{BG} \to \Sp$ is lax symmetric monoidal; the functor is right adjoint to the monoidal trivial action functor $\Sp \to \Sp^{BG}$, thus by \cite[Corollary 7.3.2.7]{Lur17} it is lax symmetric monoidal. 

\begin{prop} \label{prop:proper-tate-is-lax}
    The proper Tate construction $(-)^{\tau G} \colon \Sp^{BG} \to 
    \Sp$ has a canonical lax symmetric monoidal structure, such that the natural transformation $(-)^{hG} \Rightarrow (-)^{\tau G}$ becomes lax symmetric monoidal.
\end{prop}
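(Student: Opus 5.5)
The plan is to build the lax symmetric monoidal structure on $(-)^{\tau G}$ by exhibiting it as a symmetric monoidal localization of $(-)^{hG}$. I will realize $X^{\tau G}$ as $(-)^{hG}$ composed with a lax symmetric monoidal localization that kills objects induced from proper subgroups. Let $\mathcal{I} \subseteq \Sp^{BG}$ be the thick (localizing) subcategory generated by the induced objects $\Sigma^\infty_+ (G/H) \otimes Y$, for $H \subsetneq G$ proper and $Y \in \Sp^{BG}$ arbitrary. This subcategory is a $\otimes$-ideal, since $(G/H)_+ \otimes Y \otimes Z \simeq (G/H)_+ \otimes (Y\otimes Z)$. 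The cofiber description in \cref{def:proper-tate} should then be equivalent to $X^{\tau G} \simeq (L X)^{hG}$, where $L$ is the localization of $\Sp^{BG}$ away from $\mathcal{I}$. Concretely, $LX = \widetilde{E\mathcal{F}} \otimes X$ with $\mathcal{F}$ the family of proper subgroups, interpreted through the genuine $G$-spectrum with underlying Borel object $X$. Equivalently, it is the Verdier quotient of $\Sp^{BG}$ by $\mathcal{I}$.

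The steps are as follows. First, I show that the colimit $\colim_{G/H\in \Orb_G} X^{hH} \to X^{hG}$ is the map $\Map^{G}(\mathbb{S}, E\mathcal{F}_+ \otimes X) \to \Map^G(\mathbb{S},X)$. This follows since $E\mathcal{F}_+ = \colim_{\Orb_G, H\neq G} (G/H)_+$, and since $(-)^{hG}$ commutes with tensoring by the finite free objects $(G/H)_+$, the transfers being adjoint to the collapse maps. Hence $X^{\tau G}$ is the cofiber, namely the $G$-fixed points of $\widetilde{E\mathcal{F}}\otimes X$. Second, I use that $\widetilde{E\mathcal{F}}$ is an idempotent commutative algebra: the map $\mathbb{S}\to \widetilde{E\mathcal{F}}$ becomes an equivalence after tensoring with $\widetilde{E\mathcal{F}}$. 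Therefore $X\mapsto \widetilde{E\mathcal{F}}\otimes X$ is a smashing localization, and by \cite[Proposition 4.8.2.9]{Lur17} and its surrounding results it is symmetric monoidal onto its image. It also admits a lax symmetric monoidal unit transformation $\Id \Rightarrow \widetilde{E\mathcal{F}}\otimes(-)$. Third, I compose with the lax symmetric monoidal fixed-points functor, which is lax as a right adjoint of a symmetric monoidal functor by \cite[Corollary 7.3.2.7]{Lur17}, just as was noted for $(-)^{hG}$. Whiskering the unit transformation then makes $(-)^{hG}\Rightarrow(-)^{\tau G}$ a lax symmetric monoidal transformation.

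To stay within Borel spectra rather than genuine ones, I can phrase everything intrinsically. I take $\Sp^{BG}\to \Sp^{BG}/\mathcal{I}$, which is symmetric monoidal because $\mathcal{I}$ is a $\otimes$-ideal. I then use the right adjoint of the lax functor $\Sp \to \Sp^{BG}\to \Sp^{BG}/\mathcal{I}$ (trivial action followed by the quotient); this right adjoint is automatically lax. It remains to identify this right adjoint with the cofiber in \cref{def:proper-tate}. That identification amounts to computing maps out of the unit in the quotient as the cofiber of $\colim_{\Orb_G}$ of the induced terms.

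The main obstacle is this last identification, namely that morphisms from the unit in the Verdier quotient are computed by the cofiber of the transfer colimit over the orbit category rather than by some larger colimit. My approach is to use the fact that $\mathbb{S}$ admits a $\mathcal{I}$-cellular approximation given by $\colim_{H\neq G}(G/H)_+ \to \mathbb{S}$, the Borel shadow of $E\mathcal{F}_+$, and to check that this approximation is $\mathcal{I}$-local-equivalent in the appropriate sense. If that identification proves delicate, I would instead cite the comparison with $\widetilde{E\mathcal{F}}$ from \cite{Yuan21} or \cite{AMR23}.
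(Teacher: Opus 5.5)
Your main line is a valid route, and it is genuinely different from the paper's. You work in genuine $G$-spectra and in effect set $X^{\tau G}=\Phi^G(\beta X)=\bigl(\widetilde{E\mathcal P}\otimes\beta X\bigr)^G$, where $\mathcal P$ is the family of proper subgroups and $\beta$ is the lift to genuine spectra. Lax monoidality is then formal:
\begin{itemize}
\item $\beta$ and $(-)^G$ are right adjoints of symmetric monoidal functors;
\item $\widetilde{E\mathcal P}\otimes-$ is lax because $\widetilde{E\mathcal P}$ is an idempotent commutative algebra;
\item whiskering its unit gives the transformation $(-)^{hG}=(-)^G\circ\beta\Rightarrow(-)^{\tau G}$.
\end{itemize}
Matching \cref{def:proper-tate} is isotropy separation, together with $E\mathcal P_+\simeq\colim_{\Orb_{\mathcal P}}(G/H)_+$, the fact that $(-)^G$ commutes with colimits, and $((G/H)_+\otimes\beta X)^G\simeq X^{hH}$ with transfers as structure maps.

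The paper never leaves $\Sp^{BG}$. It copies Nikolaus--Scholze: it forms the Verdier quotient by the \emph{thick} $\otimes$-ideal $\scD$ generated by objects induced from proper subgroups, and takes the left Kan extension of $(-)^{hG}$ along it. The key input is \cref{lem:closure-of-induced}; its part (iii), $X^{\tau G}\simeq\colim_{Y\in\scD/X}\cofib(Y\to X)^{hG}$, is exactly the pointwise formula for that Kan extension. Your approach is more conceptual and avoids this filtered-colimit analysis, but it needs the genuine machinery. The paper's approach is self-contained in Borel spectra and pins the structure down by the universal property of the Kan extension.

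Several statements in your write-up need correcting.
\begin{itemize}
\item You must name the lift as the Borel completion $\beta X=F(EG_+,X)$, the right adjoint of $\Sp^G\to\Sp^{BG}$, and use that it is lax symmetric monoidal. An arbitrary genuine spectrum with underlying $X$ will not do, and without $\beta$ your step three gives no functor on $\Sp^{BG}$.
\item $(LX)^{hG}$ must be the genuine fixed points $(LX)^G$. Since $\widetilde{E\mathcal P}$ is underlying contractible, the homotopy fixed points vanish.
\end{itemize}

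Your Borel-intrinsic fallback does not work as stated.
\begin{itemize}
\item For $G\neq 1$, the \emph{localizing} subcategory of $\Sp^{BG}$ generated by induced objects is everything, because $G_+\otimes\SSS$ is a generator.
\item The map $\colim_{\Orb_{\mathcal P}}(G/H)_+\to\SSS$ is already an equivalence in $\Sp^{BG}$, since $E\mathcal P$ is non-equivariantly contractible. Your proposed ``cellular approximation'' therefore detects nothing.
\item With only the thick subcategory, the quotient $\Sp^{BG}/\mathcal I$ is not presentable, and there is no available right adjoint of $\Sp\to\Sp^{BG}/\mathcal I$.
\end{itemize}
This is precisely why Nikolaus--Scholze, and the paper, use a left Kan extension rather than a right adjoint.
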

\begin{proof}

    In \cite[Theorem I.3.1]{NS18}, it was shown that the canonical transformation $(-)^{hG} \Rightarrow (-)^{tG}$ (ordinary Tate construction) becomes lax symmetric with a unique choice of lax-symmetric structure on $(-)^{tG}$. We can mimic the proof by replacing the ordinary Tate construction with the proper Tate construction.

    In the proof in \cite{NS18}, the authors take $\scC = \Sp^{BG}$, and $\scD = \Sp^{BG}_{\text{Ind}}$, where $\Sp^{BG}_{\text{Ind}}$ is the stable subcategory of $\Sp^{BG}$ generated by spectra of the form $\bigoplus_{g \in G} X$, with $G$ acting by permutation -- these are exactly spectra induced from the subgroup $\{e\} \subseteq G$. The proof is completed formally, using some properties of the inclusion $\scD \subseteq \scC$, as well as the fact that the ordinary Tate construction $(-)^{t G}$ factors through $\scC/\scD$ (as the ordinary Tate construction of spectra induced from $\{e\}$ is $0$). 
    \\ \\
    We can instead let $\scD$ be a different stable subcategory of $\Sp^{BG}$, namely the smallest stable subcategory of 
    $\Sp^{BG}$ which includes all spectra with a $G$ action which is induced from any subgroup $H \subseteq G$. I.e., spectra of the form $\bigoplus_{gH \in G/H} gX$, where $X \in \Sp^{BH}$. Note that this is strictly larger than $\Sp^{BG}_{\text{Ind}}$, which is generated by spectra induced only from the trivial subgroup $\{e\}$.

    As in \cite{NS18}, to complete the proof, we need to show a parallel of \cite[Lemma I.3.8]{NS18} for $\scD$, which we will do after this proof in \cref{lem:closure-of-induced}.

    Then $(-)^{\tau G}$ factors through $\scC \to \scC/\scD$ as it kills all induced spectra (\cite[Remark 2.26]{Yuan21}), and the rest of the proof works in the same way. 
    
\end{proof}

\begin{lem} \label{lem:closure-of-induced}
    Let $X \in \Sp^{BG}$.
    \begin{enumerate}
        \item[(i)] If $X \in \scD$ then $X^{\tau G} = 0$. 
        \item[(ii)] For all $Y \in \scD$, $X \otimes Y \in \scD$ (i.e., $\scD$ is an $\otimes$-ideal). 
        \item[(iii)] The natural maps:
        \[
        \colim_{Y \in \scD/X} Y \to X
        \]
        and \[
        \colim_{Y \in \scD/X} \cofib(Y \to X)^{hG} \longrightarrow \colim_{Y \in \scD/X} \cofib(Y \to X)^{\tau G} = X 
        \]
        are equivalences. 
    \end{enumerate}
\end{lem}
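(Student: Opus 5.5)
The plan is to mimic the proof of \cite[Lemma I.3.8]{NS18}, replacing the subcategory generated by spectra induced from the trivial subgroup with our $\scD$. Let $\scF$ be the family of proper subgroups of $G$. Let $E\scF$ be the $G$-space whose $H$-fixed points are contractible for $H\in\scF$ and empty for $H=G$. Let $\widetilde{E\scF}$ be the cofiber of $E\scF_+\to S^0$. Throughout we use the model of \cref{def:proper-tate}, in which $X^{\tau G}$ is the cofiber of $\colim_{\Orb_G} X^{hH}\to X^{hG}$ (with $H$ ranging over proper subgroups). The standard identification $\colim_{G/H\in\Orb_G} X^{hH}\simeq (E\scF_+\otimes X)^{hG}$ then gives
\[
X^{\tau G}\simeq (\widetilde{E\scF}\otimes X)^{hG}.
\]

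\textbf{(i).} By \cref{prop:tate-is-exact}, $(-)^{\tau G}$ is exact. Since the full subcategory of $\Sp^{BG}$ on which it vanishes is stable and closed under retracts, it suffices to check that $(-)^{\tau G}$ kills generators $\mathrm{Ind}_H^G Y$ with $H$ proper. For such a generator, $(\mathrm{Ind}_H^G Y)^{hG}\simeq Y^{hH}$, and this is hit by the transfer from the orbit $G/H$ appearing in the colimit. Alternatively, $\widetilde{E\scF}\otimes \mathrm{Ind}_H^G Y\simeq \mathrm{Ind}_H^G(\mathrm{Res}_H\widetilde{E\scF}\otimes Y)\simeq 0$, because $\widetilde{E\scF}$ is $H$-equivariantly contractible. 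This is \cite[Remark 2.26]{Yuan21}.

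\textbf{(ii).} By the projection formula, $X\otimes \mathrm{Ind}_H^G Y\simeq \mathrm{Ind}_H^G(\mathrm{Res}_H X\otimes Y)$, which is again induced. Tensoring with $X$ is exact, so the class of $Y$ with $X\otimes Y\in\scD$ is a stable subcategory containing the generators, and hence contains all of $\scD$.

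\textbf{(iii).} This is the heart of the argument. First, $\scD_{/X}$ is filtered, since $\scD$ is closed under finite colimits. Next, $E\scF$ admits a $G$-CW filtration by skeleta $E\scF^{(k)}$ whose cells are of the form $G/H_+\wedge D^k$ with $H\in\scF$. Each $E\scF^{(k)}_+\otimes X$ therefore lies in $\scD$ (using (ii) together with closure under cofibers), so these objects form a sequence in $\scD_{/X}$. Every $Y\to X$ with $Y\in\scD$ factors essentially uniquely through $E\scF_+\otimes Y\to E\scF_+\otimes X$, because $E\scF_+\otimes Y\simeq Y$ for $Y$ induced from a proper subgroup. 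It follows that this sequence is cofinal, and the first comparison map is identified with the canonical map out of $\colim_k E\scF^{(k)}_+\otimes X$, in the sense used in \cite{NS18}.

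For the second map, cofinality identifies $\colim_{Y\in\scD/X}\cofib(Y\to X)^{hG}$ with $\colim_k(\widetilde{E\scF}^{(k)}\otimes X)^{hG}$. Since $(-)^{\tau G}$ kills $\scD$ by (i), the target $\colim\cofib(Y\to X)^{\tau G}$ is constantly $X^{\tau G}$. Comparing the two requires commuting $(-)^{hG}$ with the sequential colimit, i.e., showing $\colim_k(\widetilde{E\scF}^{(k)}\otimes X)^{hG}\simeq(\widetilde{E\scF}\otimes X)^{hG}=X^{\tau G}$. I expect this interchange of homotopy fixed points with a sequential colimit to be the main obstacle.

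To handle it, I would use the fact that the cofiber $\widetilde{E\scF}/\widetilde{E\scF}^{(k)}$ is built from cells induced from proper subgroups in degrees above $k$. Applying $(-)^{hG}$ to such cells gives $X^{hH}$-type terms that are killed upon passing to the proper Tate construction. I would then argue, as \cite{NS18} does with $EG$ for a finite group, that the transition maps become equivalences after applying $\cofib(\colim_{\Orb_G}(-)^{hH}\to(-)^{hG})$. This yields the claimed equivalence with $X^{\tau G}$, which completes the parallel of \cite[Lemma I.3.8]{NS18} needed in \cref{prop:proper-tate-is-lax}.
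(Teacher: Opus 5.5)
Your (i) and (ii) agree with the paper: exactness plus \cite[Remark 2.26]{Yuan21}, and the projection formula plus stability. In (i) the real content is the citation, since being hit by a transfer does not by itself make the cofiber vanish.

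The gap is in (iii). Your starting identification $X^{\tau G}\simeq(\widetilde{E\mathcal{F}}\otimes X)^{hG}$ is false in $\Sp^{BG}$. The trivial subgroup lies in $\mathcal{F}$, so $E\mathcal{F}_+\to S^0$ is an underlying equivalence. Hence in the Borel category $E\mathcal{F}_+\otimes X\simeq X$ and $\widetilde{E\mathcal{F}}\otimes X\simeq 0$. The formula is correct only for \emph{genuine} fixed points of the Borel completion, and those do commute with colimits. So the interchange you single out, $\colim_k(\widetilde{E\mathcal{F}}^{(k)}\otimes X)^{hG}\simeq(\widetilde{E\mathcal{F}}\otimes X)^{hG}$, is not merely hard but false: the right side is $0$, while the left side is supposed to be $X^{\tau G}$. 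For the same reason your cofinality argument is empty, because $E\mathcal{F}_+\otimes Y\simeq Y$ holds for \emph{every} $Y$. Cofinality is really about factoring through finite skeleta. Finally, your last paragraph only shows $(\widetilde{E\mathcal{F}}^{(k)}\otimes X)^{\tau G}\simeq X^{\tau G}$ for all $k$. That is the trivial half and follows from (i). It never shows that the colimit of the homotopy fixed points maps isomorphically onto $X^{\tau G}$.

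The missing input is that $\colim_{Y\in\scD/X}\cofib(Y\to X)^{hH}\simeq 0$ for every proper $H\subsetneq G$. With it, the paper's route finishes: take the colimit over $Y$ of the cofiber sequence $\colim_{G/H\in\Orb_G}\cofib(Y\to X)^{hH}\to\cofib(Y\to X)^{hG}\to X^{\tau G}$ and exchange the two colimits. The paper obtains this vanishing by commuting $(-)^{hH}$ with the filtered colimit via \cite[Proposition 5.3.3.3]{Lur09}. That result only concerns finite limits, so that step needs the same repair.

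Here is an elementary proof of the vanishing. For $Y\to X$ with cofiber $C$, put $C'=\cofib(\mathrm{Ind}_H^G\mathrm{Res}_HC\to C)$ and $Y'=\mathrm{fib}(X\to C')$. By the octahedral axiom, $Y'$ is an extension of $Y$ by $\mathrm{Ind}_H^G\mathrm{Res}_HC\in\scD$. So $Y\to Y'$ is a morphism in $\scD/X$, and it induces $C\to C'$ on cofibers. By the triangle identity, $\mathrm{Res}_H$ of the counit has a section, so $\mathrm{Res}_HC\to\mathrm{Res}_HC'$ is null. Hence $C^{hH}\to C'^{hH}$ is null, and every class dies in the filtered colimit. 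Taking $H=1$ also gives the first equivalence when $G\neq 1$, in place of the paper's $\pi_i$ argument.

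If you prefer to keep your sequential model, the needed input is genuinely equivariant. $\mathrm{Res}_H\widetilde{E\mathcal{F}}$ is $H$-contractible and the skeleta are compact. So $\mathrm{Res}_H\widetilde{E\mathcal{F}}^{(k)}\to\mathrm{Res}_H\widetilde{E\mathcal{F}}^{(k')}$ is $H$-null for $k'\gg k$. This gives both cofinality and $\colim_k(\widetilde{E\mathcal{F}}^{(k)}\otimes X)^{hH}=0$.
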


\begin{proof}
For (i), by exactness of $(-)^{\tau G}$, the full subcategory where $(-)^{\tau G}$ vanishes is stable. Thus, it is enough to show that the proper Tate construction vanishes on induced spectra. This is discussed in \cite[Remark 2.26]{Yuan21}. 
\\ \\
For part (ii), given $X \in \Sp^{BG}$, the full subcategory of $Y$ such that $X \otimes Y \in \scD$ is again stable, so it suffices to assume $Y = \bigoplus_{gH \in G/H} gZ$ is induced. But in this case there is an $H$-equivariant inclusion map $X \otimes Z \to X \otimes Y$ given by inclusion of one of the summands. This map is adjoint to the $G$-equivariant map 
\[
\bigoplus_{gH \in G/H} g(X \otimes Z) \to X \otimes Y
\]
which is an equivalence. But the left-hand side is induced, so $X \otimes Y \in \scD$.
\\ \\
Finally, for (iii), first note that this colimit is filtered, as $\scD$ is stable.

For the first map, it is enough to check that for all $i \in \ZZ$, the map:
\[
\colim_{Y \in \scD/X} \pi_i Y \to \pi_iX
\]
is an isomorphism. This proves the first equivalence, as the colimit is filtered and thus commutes with $\pi_i$. By translation, we can assume $i = 0$. Let $\alpha \colon \SSS \to X$ represent an element in $\pi_0 X$. Then there is a $G$-equivariant map $Y :=\bigoplus_{g \in G} \SSS \to X$ defined such that on one of the copies of $\SSS$ it is given by $\alpha$ (by $G$-equivariance, this determines the whole map). Then the inclusion of this copy $\SSS \to Y$ is an element of $\pi_0(Y)$ that maps to $\alpha$.

For injectivity, suppose $Y \in \scD$, and $\beta \in \ker(\pi_0 Y \to \pi_0 X)$. Then there is a corresponding map $\SSS \to Y$ such that $\SSS \to Y \to X$ is $0$. Let $\overline{Y} := \cofib\left(\bigoplus_{g\in G} \SSS \to Y\right)$, which is an object in $\scD$ with an induced map $\overline{Y} \to X$ such that there is a factorization $Y \to \overline Y \to X$. By construction, $\beta$ dies in $\pi_0 \overline Y$, and as the colimit is filtered, this means it dies in the colimit. 
\\ \\
For the second map, note that for all $Y \in \scD/X$, there is a fiber sequence:
\[
\colim_{G/H} \cofib(Y \to X)^{hH} \to \cofib(Y \to X)^{hG} \to \cofib(Y \to X)^{\tau G} = X^{\tau G}
\]
(Where the last equality is true because $(-)^{\tau G}$ is exact, and $Y^{\tau G} = 0$). Passing to a filtered colimit over $Y$, the left term vanishes, as:
\[
\begin{split}
    \colim_{Y \in \scD/X} \left( \colim_{G/H} \cofib(Y \to X)^{hH} \right) & = \colim_{G/H} \left( \colim_{Y \in \scD/X} \cofib(Y \to X)^{hH} \right) \\
    & = \colim_{G/H} \left( \colim_{Y \in \scD/X} \cofib(Y \to X) \right)^{hH} \\ 
    & = \colim_{G/H} \left( \cofib\left( \colim_{Y \in \scD/X} Y \to X \right)\right)^{hH} \\ 
    & = 0
\end{split}
\]
Where in the second equality we used the fact that (small) limits, namely $(-)^{hH}$, commute with filtered colimits \cite[Proposition 5.3.3.3]{Lur09}, and the final equality is by the first equivalence of (iii). 
\end{proof}

\subsection{Tate Diagonal} \label{subsection:tate-diagonal}

Denote by $T_G \colon \Sp \to \Sp$ the functor $T_G(X) = \left(X^{\otimes G}\right)^{\tau G}$, where $G$ acts by permutation of the factors.

\begin{prop}
    For a finite group $G$, $T_G \colon \Sp \to \Sp$ is exact and lax symmetric monoidal.
\end{prop}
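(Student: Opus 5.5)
We need to show that $T_G(X) = (X^{\otimes G})^{\tau G}$ is exact and lax symmetric monoidal. The plan is to write $T_G$ as a composite of two functors,
\[
\Sp \xrightarrow{\;X \mapsto X^{\otimes G}\;} \Sp^{BG} \xrightarrow{\;(-)^{\tau G}\;} \Sp ,
\]
and treat the two properties separately.

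\textbf{Lax symmetric monoidality.} This part is formal. The functor $X \mapsto X^{\otimes G}$ with the permutation action is symmetric monoidal, since $(X \otimes Y)^{\otimes G} \simeq X^{\otimes G} \otimes Y^{\otimes G}$ equivariantly. In the $\infty$-categorical setting this follows, as in \cite[III.1]{NS18}, from the fact that $\Sp$ is a commutative algebra object and the $G$-fold tensor power is an (indexed) symmetric monoidal functor $\Sp \to \Sp^{BG}$. By \cref{prop:proper-tate-is-lax}, $(-)^{\tau G}$ is lax symmetric monoidal. A composite of lax symmetric monoidal functors is lax symmetric monoidal.

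\textbf{Exactness.} This is the real content, because $X \mapsto X^{\otimes G}$ is not exact. Both $T_G$ and the zero functor preserve the zero object, so it suffices to show that $T_G$ preserves finite coproducts (equivalently cofiber sequences). I will mimic the argument of \cite[Proposition III.1.1]{NS18}.

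1. Fix $X, Y$ and expand
\[
(X \oplus Y)^{\otimes G} \simeq \bigoplus_{S \subseteq G} X^{\otimes S} \otimes Y^{\otimes G \setminus S}.
\]
Here $G$ acts on the index set, the set of subsets of $G$.

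2. Group the summands into $G$-orbits of subsets. The two fixed subsets $S = G$ and $S = \emptyset$ give the summands $X^{\otimes G}$ and $Y^{\otimes G}$.

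3. For any other orbit, with representative $S$ and stabilizer $H = \mathrm{Stab}(S)$, the orbit summand is induced:
\[
\bigoplus_{gH \in G/H} g\bigl(X^{\otimes S} \otimes Y^{\otimes G\setminus S}\bigr).
\]
Since $S$ is neither $\emptyset$ nor $G$, $H$ is a proper subgroup. If $H = G$, then $S$ would be $G$-stable, and $G$ acts transitively on itself, forcing $S \in \{\emptyset, G\}$.

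4. By \cref{lem:closure-of-induced}(i), $(-)^{\tau G}$ kills spectra induced from proper subgroups. It is additive by \cref{prop:proper-tate-is-exact}. Hence the natural map
\[
T_G(X) \oplus T_G(Y) \to T_G(X \oplus Y)
\]
is an equivalence. Note that the proper Tate construction is what makes all non-free orbits vanish; this is why the statement holds for arbitrary $G$, not only $C_p$.

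\textbf{Main obstacle.} The expected difficulty is passing from preservation of finite sums to preservation of cofiber sequences. A functor between stable $\infty$-categories that preserves finite direct sums is additive but not automatically exact.

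My first approach is to follow \cite{NS18}. For a cofiber sequence $X \to Y \to Z$, filter $Y^{\otimes G}$ by the "number of factors from $X$". The associated graded pieces of this filtration are, except for the extreme terms $X^{\otimes G}$ and $Z^{\otimes G}$, induced from proper stabilizers of subsets, exactly as in step 3. Applying $(-)^{\tau G}$ and using \cref{prop:proper-tate-is-exact} and \cref{lem:closure-of-induced}(i), these middle pieces vanish. We are left with a cofiber sequence
\[
T_G(X) \to T_G(Y) \to T_G(Z).
\]

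The delicate point is producing this filtration $G$-equivariantly. I plan to do this with the cubical diagram indexed by subsets of $G$, as in \cite{NS18}.
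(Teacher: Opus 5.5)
Your proposal is correct and follows the paper's proof essentially step for step. You decompose $(X\oplus Y)^{\otimes G}$ over subsets of $G$ (the paper uses $\{0,1\}^G$), note that the orbits other than the two fixed ones are induced from proper stabilizers and so are killed by $(-)^{\tau G}$, and then filter $\widetilde{X}^{\otimes G}$ so that its middle graded pieces are induced in the same way; the only blemish is your parenthetical saying that preserving finite sums is ``equivalent'' to preserving cofiber sequences, which you correctly retract yourself under ``Main obstacle''.
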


\begin{proof}
    Lax symmetric monoidality is immediate, as $(-)^{\otimes G}$ is lax symmetric monoidal, and $(-)^{\tau G}$ is too, as discussed in \cref{prop:tate-is-exact}. 
    \\ \\
    The proof of exactness will follow the same lines as the proof in \cite[Proposition III.1.1]{NS18} for the case $G = C_p$.

    First, let us see that $T_G$ preserves sums. Fix some $X_0, X_1 \in \Sp$. Note that $G$ acts on the set $\{0,1\}^G$ by left-translation, and let $P$ be the set of orbits of this action. In particular, we have the one element orbits $0=(0,...,0)$ and $1=(1,...,1)$. All other orbits are non-trivial (this is because the only fixed points of the action are $0$ and $1$, as the action of $G$ on itself is transitive). 
    
    We have:
    \[
    \begin{split}
    T_G(X_0 \oplus X_1) & = \left( \bigoplus_{(i_g)\in\{0,1\}^G} \left( \bigotimes_{g \in G} X_{i_g} \right) \right)^{\tau G} \\ 
    & = T_G(X_0) \oplus T_G(X_1) \oplus \bigoplus_{O \in P \setminus \{0,1\}} \left( \bigoplus_{(i_g) \in O} \bigotimes_{g \in G}X_{i_g} \right)^{\tau G}
    \end{split}
    \]

    For $O = O((i_{g})) \in P \setminus \{0,1\}$, the summand:
    \[
    \bigoplus_{(i_g) \in O} \bigotimes_{g \in G}X_{i_g} 
    \]
    is induced from the stabilizer $\text{stab}((i_g))$ of $(i_g)$, on the $\text{stab}((i_g))$-spectrum $\bigotimes_{g \in G} X_{i_g}$. Since the proper Tate construction vanishes on induced $G$-spectra (\cite[Remark 2.26]{Yuan21}), we conclude that $T_G(X_0 \oplus X_1) \simeq T_G(X_0) \oplus T_G(X_1)$. 
    \\\\
    To conclude exactness, it suffices to check that $T_G$ commutes with extensions. Indeed, suppose that that $X_0 \to \widetilde{X} \to X_1$ is a fiber sequence. Then we obtain a filtration of $\left( \widetilde{X} \right)^{\otimes G}$, whose successive filtration steps are given by $(X_0)^{\otimes G}$, and then:
    \[
    \bigoplus_{(i_g) \in O} \bigotimes_{g \in G} X_{i_g}
    \]
    ordered by the sum $\sum_{g \in G} i_g$. But because $(-)^{\tau G}$ is exact kills all non-trivial orbits, we get a fiber sequence:
    \[
    ((X_0)^{\otimes G})^{\tau G} \to ((\widetilde{X})^{\otimes G})^{\tau G} \to ((X_1)^{\otimes G})^{\tau G}
    \]
\end{proof}

\begin{cor} \label{cor:tate-diagonal}
    Let $G$ be a finite group. Then there is a unique lax symmetric natural transformation:
    \[
    \Delta_G \colon \Id_{\Sp} \Rightarrow T_G
    \]
    Which we call the (generalized) Tate-diagonal. 
\end{cor}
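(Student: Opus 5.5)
The plan is to follow the argument of \cite[Corollary III.1.2]{NS18} for $G = C_p$ essentially verbatim. By the preceding proposition, $T_G$ is exact and lax symmetric monoidal. Exact functors $\Sp \to \Sp$ are in particular reduced and excisive, and the identity functor $\Id_{\Sp}$ is the unit for the Day convolution structure on exact functors. Concretely, by \cite[Proposition 6.3.2.4]{Lur17} (or the corresponding statement used in \cite{NS18}), the $\infty$-category $\Fun^{\text{Ex,lax}}(\Sp,\Sp)$ of exact lax symmetric monoidal functors has $\Id_{\Sp}$ as an initial object. The statement then follows by applying this to $T_G$: the space of lax symmetric monoidal transformations $\Id_{\Sp} \Rightarrow T_G$ is contractible.

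The key steps, in order, are as follows.
\begin{enumerate}
\item Recall that exact functors $\Sp \to \Sp$ are determined, via $F \mapsto F(\SSS)$, by the spectrum $F(\SSS)$ together with its module structure. Equivalently, $\Fun^{\text{Ex}}(\Sp,\Sp) \simeq \Fun^{L}(\Sp,\Sp) \simeq \Sp$ when we restrict to colimit-preserving functors. More generally, one uses the universal property of $\Sp$ as the stabilization of spaces.
\item Identify exact lax symmetric monoidal functors $F$ with $\EE_\infty$-algebra data. The lax structure makes $F(\SSS)$ an $\EE_\infty$-ring, and a lax transformation $\Id \Rightarrow F$ corresponds to a unit map $\SSS \to F(\SSS)$ of $\EE_\infty$-rings, up to contractible choice. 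Since $\SSS$ is initial in $\CAlg$, this map exists and is unique.
\item Justify why this works even though $T_G$ need not preserve all colimits. Following \cite{NS18}, I would use the statement that $\Id_{\Sp}$ is initial among exact lax symmetric monoidal functors $\Sp \to \Sp$ \cite[Proposition 6.3.2.4/Corollary 6.3.2.5]{Lur17}. The only input this requires is exactness of $T_G$, not colimit-preservation.
\end{enumerate}

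The main obstacle is step 3. The cited result from Lurie is phrased in terms of stabilization of the unit functor and $1$-excisive approximation, so one must check that its hypotheses (the target $\Sp$ is stable and presentably symmetric monoidal, and $T_G$ is exact) are exactly what the preceding proposition supplies. I expect this to go through exactly as in \cite[Corollary III.1.2]{NS18}, since their argument uses nothing specific to $C_p$ beyond these two properties.
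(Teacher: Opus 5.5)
Your proposal takes the same approach as the paper's proof: apply the fact that $\Id_{\Sp}$ is initial among exact lax symmetric monoidal functors $\Sp \to \Sp$ to $T_G$, which is exact and lax symmetric monoidal by the preceding proposition. The paper cites this initiality as Nikolaus's multiplicative Yoneda lemma \cite[Corollary 6.9]{Nik16}, which is the reference to use instead of your Lurie numbers. You should also drop step 1: an exact functor such as $T_G$ need not preserve filtered colimits, so it is not determined by its value on $\SSS$, and the cited result needs only exactness anyway.
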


\begin{proof}
    $\Id_{\Sp}$ is initial among lax symmetric monoidal exact functors $\Sp \to \Sp$ (\cite[Corollary 6.9]{Nik16}). 
\end{proof}

Later, in \cref{prop:power-operations-factor-tate}, we will see that the Tate-diagonal $\Delta_G : X \to (X^{\otimes G})^{\tau G}$ does what we expect intuitively, namely it is a homotopy coherent version of the diagonal map on spaces $\Omega^{\infty} X \to \left( (\Omega^{\infty} X)^{\times G}\right)^{hG}$, which is not additive and therefore not a map of spectra.  

For a ring spectrum $R$ we have the Tate-valued Frobenius map:

\begin{definition}
    Let $R$ be a ring spectrum, and $G$ a finite group. Then the (generalized) Tate-valued Frobenius map, with respect to $G$, is the map:
    \[
    \phi^G \colon R \xrightarrow[]{\Delta_G} (R^{\otimes G})^{\tau G} \xrightarrow[]{\,m^{\tau G}\,} R^{\tau G}
    \]
    Where $m \colon R^{\otimes G} \to R$ is the multiplication map of $R$.
\end{definition}

As $\Delta_G$ is a lax symmetric transformation, multiplication is also lax symmetric, and $(-)^{\tau G}$ is a lax-symmetric functor, we obtain

\begin{cor} \label{cor:frob-is-lax}
    Let $G$ be a finite group. Then the Tate-valued Frobenius map assembles into a lax-symmetric natural transformation $\phi^G \colon \CAlg(\Sp) \Rightarrow \CAlg(\Sp)$.
\end{cor}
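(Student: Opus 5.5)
The plan is to assemble the lax symmetric monoidal structure on $\phi^G$ from its three constituents and check that the composite of lax symmetric monoidal transformations is again one, so that it induces a natural transformation between the induced functors on commutative algebra objects. Concretely, $\phi^G$ is the composite of two steps. The first is the Tate diagonal $\Delta_G \colon \Id_{\Sp} \Rightarrow T_G$ of \cref{cor:tate-diagonal}, which is lax symmetric monoidal by construction. The second is the transformation $T_G(R) = (R^{\otimes G})^{\tau G} \xrightarrow{m^{\tau G}} R^{\tau G}$. I would regard the latter as the whiskering of the functor $(-)^{\tau G}$, which is lax symmetric monoidal by \cref{prop:proper-tate-is-lax}, with the multiplication transformation $m \colon (-)^{\otimes G} \Rightarrow \mathrm{triv}$. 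Here both sides are viewed as functors $\CAlg(\Sp) \to \CAlg(\Sp^{BG})$: $R \mapsto R^{\otimes G}$ with the permutation action, and $R \mapsto R$ with the trivial action.

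The key step is to check that $m \colon R^{\otimes G} \to R$ is a map of commutative algebras in $\Sp^{BG}$, natural in $R \in \CAlg(\Sp)$. I would argue this via the universal property of the tensor product in $\CAlg$: the coproduct of $G$ copies of $R$ is $R^{\otimes G}$, and the fold map is the counit for the diagonal. The $G$-action by permuting coproduct factors is compatible with the fold map when $R$ carries the trivial action, so the fold map is $G$-equivariant. Equivalently, $R^{\otimes G} = \colim_{G} R$ (the tensor indexed by the $G$-set $G$), and $m$ is induced by $G \to \ast$, which is $G$-equivariant.

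Next, I would apply $(-)^{\tau G}$. Since $(-)^{\tau G}$ is lax symmetric monoidal, it induces a functor $\CAlg(\Sp^{BG}) \to \CAlg(\Sp)$, so $m^{\tau G}$ is a natural map of $\EE_\infty$-rings. Likewise, $\Delta_G$ being a lax symmetric monoidal transformation between lax symmetric monoidal functors means it induces a natural transformation of the induced functors on $\CAlg$. Concretely, for $R \in \CAlg$ the object $T_G(R)$ is an $\EE_\infty$-ring and $\Delta_G \colon R \to T_G(R)$ is an $\EE_\infty$-map. The general principle I am invoking is that lax symmetric monoidal transformations are closed under composition and whiskering, which is formal in $\Fun^{\mathrm{lax}}$. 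Composing the two steps then gives $\phi^G$ as a natural transformation from the identity of $\CAlg(\Sp)$ to $(-)^{\tau G}$ (trivial action) on $\CAlg(\Sp)$.

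I expect the main obstacle to be coherence. One has to make sure that the ring structure on $T_G(R)$ coming from $\CAlg$ of $\Sp^{BG}$, through the functor $R \mapsto R^{\otimes G}$, agrees with the one induced by the lax structure of $T_G$ used in \cref{cor:tate-diagonal}. I would handle this by noting that $(-)^{\otimes G} \colon \Sp \to \Sp^{BG}$ is symmetric monoidal, so both structures are induced from the same lax structure. Its effect on $\CAlg$ is exactly the coproduct construction above, and this is where naturality in $R$ of the fold map enters.
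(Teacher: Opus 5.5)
Your proposal is correct and follows the paper's own argument. The paper's proof is the single observation that $\phi^G$ is the composite of the lax symmetric monoidal Tate diagonal $\Delta_G$ with $m^{\tau G}$, where $m$ is a map of commutative algebras and $(-)^{\tau G}$ is lax symmetric monoidal. Your argument for the $G$-equivariance of $m$, via the coproduct description of $R^{\otimes G}$ among commutative algebras, and your coherence remark spell out details the paper leaves implicit.
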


\begin{rmrk}
    There is also a relative version of the Tate-valued Frobenius, such that for $H \subseteq G$ it gives a map:
    \[
    \phi_H^G \colon R^{\tau H} \to R^{\tau G}
    \]
    We will not define this map here. \cref{thm:proper-tate-functoriality} gives a nice functorial property of the relative Frobenius. 
\end{rmrk}

\subsection{Generalized Frobenius Maps on Flat Ring Spectra} \label{subsection:gen-frob}

Let $R$ be a $p$-adically flat ring spectrum over the $p$-completed sphere spectrum. Recall that the trivial map $R \to R^{\tau A}$ is an equivalence for all abelian $p$-groups $A$. An immediate consequence is that, for all surjections $A \twoheadrightarrow B$, the canonical map $R^{\tau B} \to R^{\tau A}$ is an equivalence, as by \cref{thm:proper-tate-functoriality} we have a commutative diagram: 
\[\begin{tikzcd}
	R & {R^{\tau B}} & {R^{\tau A}}
	\arrow["{\can_1^B}"', from=1-1, to=1-2]
	\arrow["{\can_1^A}", bend left=18, from=1-1, to=1-3]
	\arrow["{\can_B^A}"', from=1-2, to=1-3]
\end{tikzcd}\]
And since $\can_1^A$ and $\can_1^B$ are equivalences, so is $\can_B^A$.

\begin{prop} \label{prop:composition-of-frobeni}
    Let $R$ be a $p$-adically flat ring, and $A$ a finite abelian $p$-group of order $|A| = p^m$. Then the generalized Frobenius map
    \[
     R \xrightarrow[]{\phi^{A}} R^{\tau A} \xrightarrow[]{(\can^A)^{-1}} R
    \]
    is equivalent to the $m$-fold composition of the  Frobenius map $R \xrightarrow[]{\phi}R^{tC_p} \xrightarrow[]{(\can^{C_p})^{-1}} R$.
\end{prop}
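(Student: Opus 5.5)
We argue by induction on $m$, using the functoriality of the proper Tate construction along the span category $\scQ$ (\cref{thm:proper-tate-functoriality}). For $m=0$ there is nothing to prove, and for $m=1$ we have $A=C_p$, where $\phi^{C_p}=\phi$ is the classical Tate-valued Frobenius. For the inductive step, since $A$ is a $p$-group of order $p^m$ with $m\ge 1$, we choose a subgroup $H\subseteq A$ of index $p$. Then $A/H\cong C_p$, and $H$ has order $p^{m-1}$.

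The key step is to factor $\phi^A$ through $H$. In $\scQ$ the span $1 \leftarrow 1 \hookrightarrow A$ is the composite of $1\leftarrow 1\hookrightarrow H$ and $H \leftarrow H \hookrightarrow A$. Hence, by the oplax functoriality of $\tau$, up to the oplax structure map we have
\[
\phi^A \simeq \phi_H^A\circ \phi^H .
\]
Here one should check that for composites of injections only, the oplax comparison is an equivalence (or at least that it commutes as needed on $\pi_0$ after inverting the $\can$ maps, which is all we use). Next we identify $\phi_H^A$ with the Frobenius for $C_p$. Consider the two spans from $H$ to $C_p=A/H$ through $A$. On one side we have the composite ``restrict along $H\hookrightarrow A$, then $\can$ along $A\twoheadrightarrow A/H$''. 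On the other side we compose the surjection $H\twoheadrightarrow 1$ with the injection $1\hookrightarrow C_p$. Computing span composition (pullbacks of groups over $A$), we expect
\[
\can_H^A \circ \phi^{C_p}_1 \simeq \phi^A_1\circ \can \quad\text{or the analogous square}\quad \phi_H^A\circ\can_1^H \simeq \can_{C_p}^A\circ \phi^{C_p}.
\]
The latter comes from composing $1\leftarrow 1 \hookrightarrow C_p$ with $C_p \twoheadleftarrow A \hookrightarrow A$ versus $1\twoheadleftarrow H\hookrightarrow A$ composed appropriately. Since $R$ is $p$-adically flat, all $\can$ maps are equivalences (\cref{thm:tate-is-equiv} and the remark above the proposition). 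It follows that
\[
(\can^A)^{-1}\circ\phi_H^A\circ \can^H \simeq (\can^{C_p})^{-1}\circ \phi^{C_p}.
\]

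Combining the two steps, we get
\[
(\can^A)^{-1}\phi^A = (\can^A)^{-1}\phi^A_H \can^H\circ (\can^H)^{-1}\phi^H .
\]
The first factor is the $C_p$ Frobenius, and the second factor is the $(m-1)$-fold composite by induction. This gives the claim.

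The main obstacle is the second step: making precise which composite of spans equals which, and verifying that the oplax $2$-cells in \cref{thm:proper-tate-functoriality} are invertible for the relevant composites (an injection followed by a surjection, i.e., a ``double coset''-type composite). If the oplax structure is only a non-invertible transformation, I would fall back on checking the identity on $\pi_0$. There, both sides are ring endomorphisms of the torsion-free ring $\pi_0R$ lifting Frobenius powers, and the oplax map is compatible with the $\can$ equivalences, so the comparison is forced. Failing that, I would compare directly via the Tate diagonal: $\Delta_A$ factors as $\Delta_{A/H}$ applied to $\Delta_H$, through $((R^{\otimes H})^{\tau H})^{\otimes A/H}$, by the uniqueness in \cref{cor:tate-diagonal}.
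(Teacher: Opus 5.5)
Your proposal is correct and follows the paper's proof. Like the paper, you induct via an index-$p$ subgroup $H$, factor $\phi^A=\phi^A_H\circ\phi^H$, and use the span identity $\phi^A_H\circ\can^H=\can^A_{C_p}\circ\phi^{C_p}$ (both composites are $1\twoheadleftarrow H\hookrightarrow A$) together with $\can^A=\can^A_{C_p}\circ\can^{C_p}$. The paper simply treats these composites as commuting by \cref{thm:proper-tate-functoriality}; note that your $\pi_0$ fallback would not work as stated, since two Frobenius lifts on a torsion-free ring need not coincide.
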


\begin{proof}
    We proceed by induction on $m$. For $m = 0$, the statement is immediate.

    For the induction step, we may choose some subgroup $A' \subseteq A$ of order $p^{m-1}$ (For instance, write $A \cong C_{p^k} \times B$ and choose $A' = C_{p^{k-1}} \times B \subseteq A$). Then we have the following exact sequence of abelian groups:
    \[
    0 \to A' \hookrightarrow A \twoheadrightarrow C_p \to 0
    \]
    We claim the following diagram commutes:
    \[\begin{tikzcd}[row sep = large, column sep = large]
	R & {R^{\tau A'}} & {R^{\tau A}} \\
	& R & {R^{tC_p}} \\
	&& R
	\arrow["{\phi^{A'}}", from=1-1, to=1-2]
	\arrow["{\phi^A}", bend left=30, from=1-1, to=1-3]
	\arrow["{\phi_{A'}^A}", from=1-2, to=1-3]
	\arrow["{\can^{A'}}", from=2-2, to=1-2]
	\arrow["{\phi^{C_p}}"', from=2-2, to=2-3]
	\arrow["{\can_{C_p}^A}", from=2-3, to=1-3]
	\arrow["{\can^A}"', bend right=40, from=3-3, to=1-3]
	\arrow["{\can^{C_p}}", from=3-3, to=2-3]
\end{tikzcd}\]
    The commutativity of the first row and the right column follows immediately from the functoriality of the (relative) Frobenii maps and the canonical maps, given in \cref{thm:proper-tate-functoriality}. The commutativity of the square follows from the fact that both compositions coincide in the span category:

    \[\begin{tikzcd}[column sep = small]
	&& {A'} &&&&&& {A'} \\
	& 1 & {} & A && \equiv && {A'} & {} & {A'} \\
	1 && {C_{p}} && A && 1 && {A'} && A
	\arrow[two heads, from=1-3, to=2-2]
	\arrow["\lrcorner"{anchor=center, pos=0.125, rotate=-45}, draw=none, from=1-3, to=2-3]
	\arrow[hook, from=1-3, to=2-4]
	\arrow[two heads, from=1-9, to=2-8]
	\arrow["\lrcorner"{anchor=center, pos=0.125, rotate=-45}, draw=none, from=1-9, to=2-9]
	\arrow[hook, from=1-9, to=2-10]
	\arrow[two heads, from=2-2, to=3-1]
	\arrow[hook, from=2-2, to=3-3]
	\arrow[two heads, from=2-4, to=3-3]
	\arrow[hook, from=2-4, to=3-5]
	\arrow[two heads, from=2-8, to=3-7]
	\arrow[hook, from=2-8, to=3-9]
	\arrow[two heads, from=2-10, to=3-9]
	\arrow[hook, from=2-10, to=3-11]
\end{tikzcd}\]
    Thus the composition $R \xrightarrow[]{\phi^{A}} R^{\tau A} \xrightarrow[]{(\can^A)^{-1}} R$ is the same as the composition:
    \[
    R \xrightarrow[]{\phi^{A'}} R^{\tau A'} \xrightarrow[]{(\can^{A'})^{-1}} R \xrightarrow[]{\phi^{C_p}} R^{tC_p} \xrightarrow[]{(\can^{C_p})^{-1}} R 
    \]
    By induction, $R \xrightarrow[]{\phi^{A'}} R^{\tau A'} \xrightarrow[]{(\can^{A'})^{-1}} R$ is the $(m-1)$-fold composition of the Frobenius endomorphism, which completes the proof.
    
\end{proof}

\section{Comparison with the Expected Functor} 
\label{section:correctness}

In this section we finally show that the construction we gave in \cref{section:construction} makes the following diagram commute:

\begin{equation} \label{diag:main-goal}
    \begin{tikzcd}[row sep=large, column sep = large] 
    \CAlg_{/\pcs}^{\text{flat, ff}} \arrow[r,  "(-) \cotimes E"] \arrow[d, "\pi_0"] &[4em] \CAlg_{E}^{K(n),\text{even}} \arrow[d, "\pi_0"] \\ 
    \Alg_{\delta, \text{torsion-free}} \arrow[r, "A \mapsto B^A"] & \Alg_{\TT}
\end{tikzcd}
\end{equation}

In \cref{subsection:recollection-on-T-algs} we recall the sheaf structure associated to the $\TT$-algebra $\pi_0R$, where $R$ is a $K(n)$-local $E$-algebra. 

In \cref{subsection:reduction-universal}, we specialize to the case of $p$-adically flat ring spectra, and reduce our problem to the universal case, i.e., we show that it it suffices to show that our construction and $\pi_0(R \cotimes E)$ agree on the universal deformations. These universal deformations correspond to power operations with respect to symmetric groups. Thus, we only need to understand these power operations.

In \cref{subsection:abelian-subgroups}, we reduce our problem once again to understanding the power operations on abelian subgroups of symmetric groups. In \cref{subsection:comp-proper-tate} we connect the abelian power operations with the Tate-valued Frobenius map by showing that the Tate-valued Frobenius map factors through the power operations. Specifically, we establish the following decomposition of the $\pi_0$ of the Tate-valued Frobenius map:
\[
\pi_0 \phi^A: \pi_0R \xrightarrow[]{\,\overline {P_A} \,} (R^0(BA)/(I_{tr}))^{\free} \xrightarrow[]{\, \overline{G_R} \,} \pi_0(R^{\tau A})
\]
where the power operation $P_A$ is modded by some transfer ideal and reduced to its free part; and $\overline{G_R}$ is a canonical map we show exists. 

In \cref{subsection:calculations}, we show that the map $\overline{G_R}$ is injective, and thus it is enough to understand the $\pi_0$ of the Tate-valued Frobenius in order to understand the power operations. Finally, in \cref{subsection:putting-it-all-together}, we determine the $\pi_0$ of the Tate-valued Frobenius and show that it agrees with what we expect.

\subsection{Recollections on $\TT$-algebras} \label{subsection:recollection-on-T-algs}
Suppose that $R$ is a $K(n)$-local $E$-algebra. Then $\pi_0R$ has the structure of a $\TT$-algebra. In \cite{Rezk09}, Rezk constructs a functor $\Alg_{\TT} \to \Sh(\Def, \Alg)$, which is fully faithful when restricted to $p$-torsion free algebras. In this chapter, we describe the image of $\pi_0R$ under this functor. 

We first introduce some notation:

\begin{definition}
    Let $(-)^{\free} \colon \text{Ring} \to 
\text{Ring}$ denote the functor on rings which quotients a ring by its $p$-torsion (this functor may also be defined on abelian groups). It is the left adjoint to the inclusion of $p$-torsion free rings in all rings. The quotient map $S \to S^{\free}$ is the unit of the adjunction.
\end{definition}

\begin{definition} \label{def:power-operation}
    For a subgroup $G \subseteq \Sigma_{p^m}$, we define $L[G] := (E^0(BG)/(I_{tr}))^{\free}$, where $I_{tr}$ is the transfer ideal in $E^0(BG)$. Explicitly, $I_{tr}$ is generated by the images of the transfer maps $E^0(BH) \to E^0(BG)$ for all proper subgroups $H \subsetneqq G$.

    This ring has two natural maps from $E_0$. The first is the trivial map $s[G] \colon E_0 \to L[G]$, which canonically makes $L[G]$ an $E_0$-algebra. 
    The second map comes from the power operation with respect to $G$,
    \[
    t[G] \colon E_0 \to \pi_0 ((E^{\otimes p^m})^{hG}) \to \pi_0(E^{hG}) = E^0(BG) \to E^0(BG) / (I_{tr}) \to (E^0(BG) / (I_{tr}))^{\free} = L[G]
    \]
    Where the first map is the diagonal, and the second map is multiplication in $E$. 
    
    % Both maps $s[G]$ and $t[G]$ are maps of topological rings, as they are induced from maps of $E$-algebras. That is, they respect the adic topology on $E_0$ and $L[G]$.  
    
    Note that the map $t[G]$ depends on the embedding $G \hookrightarrow \Sigma_{p^m}$. 

    In the case $G = \Sigma_{p^m}$, we will denote $L[G]=L[m]$, $s[G]=s[m]$, and $t[G] = t[m]$. If it is clear from context, we will suppress the $m$-th index from $s$ and $t$.
\end{definition}

\begin{rmrk} \label{rmrk:morava-e-of-b-sigma-is-free}
    For $G = \Sigma_{p^m}$, it is a theorem that $E^0(B\Sigma_{p^m}) / (I_{tr})$ is free over $E_0$ (\cite[Theorem 8.6]{Strickland98}). Hence,
    \[
    L[m] = \left(E^0(B\Sigma_{p^m}) / (I_{tr})\right)^{\free} = E^0(B\Sigma_{p^m}) / (I_{tr}) \]
\end{rmrk}

\noindent If $R$ is a $K(n)$-local $E$-algebra, then the power operations on $R$ give rise to maps:
\[
P_{m}: R^0(X) \to R^0(X \times B\Sigma_{p^m})
\]
Since $E^0 (B\Sigma_{p^m})$ is a finite free $E_0$-module (\cite[Theorem 3.2]{Strickland98}), by the Künneth formula in the $K(n)$-local category
\[
R^0(X \times B\Sigma_{p^m}) \cong R^0 X \cotimes[E^0] E^{0}(B\Sigma_{p^m}) 
\]
This isomorphism is natural, and in particular it commutes with the transfer map. Thus:
\begin{equation} \label{eq:kunneth-plus-transfer}
    R^0(X \times B \Sigma_{p^m})/(I_{tr}) \cong R^0(X) \cotimes[{E_0}] E^0(B \Sigma_{p^m})/(I_{tr})
\end{equation}
We can also define the reduced power operations:
\[
\overline{P_m} \colon R^0(X) \xrightarrow[]{\; P_m \;} R^0 X \cotimes[E^0] E^{0}(B\Sigma_{p^m})  \to R^0X \cotimes[E_0] L[m]
\]

\begin{thm}[{\cite[Proposition 3.24]{Rezk09}}]
    $\overline{P_m}$ is a ring homomorphism.
\end{thm}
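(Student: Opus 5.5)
Additivity and multiplicativity are handled separately. Additivity of $\overline{P_m}$ modulo the transfer ideal is the classical phenomenon. Multiplicativity of $P_m$ is built into the $\EE_\infty$ structure.

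For a $K(n)$-local $E$-algebra $R$, the total power operation $P_m(x)$ of a class $x \in R^0(X)$ is represented by applying $(-)^{\otimes_E p^m}$ and passing to $\Sigma_{p^m}$-homotopy fixed points. Concretely, $X \to \Omega^\infty \mathrm{Map}(X_+,R)$ is followed by the space-level diagonal into $\bigl((\Omega^\infty\mathrm{Map}(X_+,R))^{\times p^m}\bigr)^{h\Sigma_{p^m}}$, then by the multiplication of $R$. The diagonal on spaces is multiplicative because $(xy)^{\otimes p^m} = x^{\otimes p^m} y^{\otimes p^m}$ coherently. Equivalently, $\PP_{p^m}$ is lax monoidal and the $\EE_\infty$ multiplication is compatible with it. This gives $P_m(xy)=P_m(x)P_m(y)$ and $P_m(1)=1$ already in $R^0(X\times B\Sigma_{p^m})$, so multiplicativity survives to the quotient.

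For additivity, I expand $(x+y)^{\otimes p^m}$ as in the proof of exactness of $T_G$ earlier. The set $\{0,1\}^{p^m}$ decomposes into $\Sigma_{p^m}$-orbits indexed by $k=0,\dots,p^m$ (the number of $1$'s). The $k$-th orbit contributes the $\Sigma_{p^m}$-spectrum induced from $\Sigma_k\times\Sigma_{p^m-k}$ applied to $x^{\otimes p^m-k}\otimes y^{\otimes k}$. On homotopy fixed points this yields the Cartan-type formula
\[
P_m(x+y)=\sum_{k=0}^{p^m} \mathrm{tr}_{\Sigma_{k}\times\Sigma_{p^m-k}}^{\Sigma_{p^m}}\bigl(P_{k}(y)\times P_{p^m-k}(x)\bigr).
\]
The terms $k=0$ and $k=p^m$ are $P_m(x)$ and $P_m(y)$. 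For $0<k<p^m$ the subgroup $\Sigma_k\times\Sigma_{p^m-k}$ is proper, so the summand lies in the image of a transfer from a proper subgroup. Under the Künneth identification (\ref{eq:kunneth-plus-transfer}) such a summand lies in $R^0(X)\cotimes[E_0] I_{tr}$. Hence $\overline{P_m}(x+y)=\overline{P_m}(x)+\overline{P_m}(y)$.

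The main obstacle is making this transfer formula rigorous and checking that the Künneth isomorphism carries transfers from proper subgroups into $R^0X\cotimes I_{tr}$. For the formula, I would identify the decomposition of $\bigl(M\oplus N\bigr)^{\otimes p^m}$ as a $\Sigma_{p^m}$-spectrum with a sum of induced pieces, and use that homotopy fixed points of an induced object are computed by transfer (the Wirthmüller isomorphism for finite groups). For the transfer compatibility, I would use naturality of the Künneth map in the $B\Sigma_{p^m}$ variable, including along transfers, which holds because $E^0(BH)$ is finite free for every subgroup $H$ considered. Finally, $E_0$-linearity of $\overline{P_m}$ is not needed; only ring homomorphism is claimed, and the twisting by $t[m]$ is accounted for by the target's structure.
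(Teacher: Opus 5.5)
Your proposal is correct and is essentially the standard argument behind the cited result; the paper gives no proof of its own, only the reference to Rezk. That argument combines multiplicativity of the unreduced operation $P_m$ with the additivity formula expressing $P_m(x+y)-P_m(x)-P_m(y)$ as a sum of transfers from the proper Young subgroups $\Sigma_k\times\Sigma_{p^m-k}$, and these transfers die once you pass to the quotient by the transfer ideal under the K\"unneth identification. One small slip that does not affect the argument: the homotopy-orbit functor $M\mapsto (M^{\otimes_E p^m})_{h\Sigma_{p^m}}$ is oplax rather than lax monoidal, while the homotopy-fixed-point version $M\mapsto (M^{\otimes_E p^m})^{h\Sigma_{p^m}}$ is the lax one. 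Your space-level diagonal argument for $P_m(xy)=P_m(x)P_m(y)$ does not depend on this.
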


Letting $X = *$, we obtain maps $\psi_m \colon \pi_0R \to \pi_0R \cotimes[E_0] L[m]$. Note that the maps $t[m]$ are precisely the maps $\overline{P_m}$ in the case $X = *$ and $R = E$, hence they are also ring homomorphisms. Note that the maps $s[m]$ and $t[m]$ also respects the adic topology, as they are induced from maps of $E$-modules.  
\\\\
We are now ready to describe how $\pi_0R$ becomes a sheaf on $\Def$, where $R$ is a $K(n)$-local $E$-algebra. The collection $\{L[m]\}_{m \ge 0}$ determines a comonad, such that being a comodule over it is equivalent to being a ring $A$ with structure maps $\psi_m \colon A \to A \cotimes[E_0] L[m]$ that are compatible with respect to comultiplication maps $L[m] \cotimes L[k] \to L[m + k]$. Then there is an equivalence between the category of these comodules and the category of $\TT$-algebras which satisfy the congruence criterion, in the $p$-torsion free case. We will describe how one can recover the sheaf structure associated to such a comodule.
\\ \\
First we need to understand the ring $L[m]$. This ring classifies deformations of $\Frob^m$. That is, maps $L[m] \to R$ uniquely correspond to deformations of $\Frob^m$ over the ring $R$. In particular, the map $s[m] \colon E_0 \to L[m]$ classifies the source of the deformation, and the map $t[m] \colon E_0 \to L[m]$ classifies the target. That is, letting $\GG = \Spf(E^*(\CP^{\infty})) = \Spf(E_0[\![x]\!])$ be the universal deformation, a map $g:L[m] \to R$ corresponds to a deformation of the form $(g \circ s)^*\GG \to (g \circ t)^*\GG$. 

% Let us describe the correspondence explicitly. 
%  Let $s = s[m],t=t[m] \colon E_0 \to L[m]$ be defined as before. 
% The universal deformation of $\Frob^m$ will be a morphism $f_m \colon s^*\GG \to t^*\GG$, with  corresponding ring map:
% \[
% (f_m)^* \colon \scO_{t^*\GG} = L[m]_t \otimes E_0[\![x]\!] \to L[m]_s \otimes E_0[\![x]\!]  = \scO_{s^*\GG}
% \]
% Which is just the identity map. 
% For any other deformation of $\Frob^m$ over a ring $R$, there is a map $L[m] \to R$, such that the base change of $f_m$ to $R$ gives the original deformation over $R$. 

In particular, suppose $R$ is a $K(n)$-local $E$-algebra, and let us describe the sheaf $A$ induced from $\pi_0 R$. For any deformation of the formal group $E_0 \to S$, it gives $A_{S}(G)= \pi_0R \cotimes[E_0] S$. Let $g \colon L[m] \to S$ represent a deformation of $\Frob^m$, of the form $(g \circ s)^*\GG  \to (g \circ t)^*\GG$. Base changing the power operation $\psi_m \colon \pi_0R \to \pi_0R \cotimes[E_0] L[m]$, we get a map:
\[
\widetilde{\psi_m} \colon \pi_0R \cotimes[E_0]  L[m]_t \to \pi_0R \cotimes[E_0]  L[m]_s
\]
Where the subscripts $s$ and $t$ specify the $E_0$-algebra structure of $L[m]$ (the standard one is given by $s$). To get $A_S(g)$, we need to base change the map $\widetilde{\psi_m}$ to $S$. Explicitly, $A_S(g)$ is given by:
\[
\begin{split}
    A_S((g \circ t)^*\GG) & = \pi_0R \cotimes[E_0] S_{g\circ t} \\
     & =  \pi_0R \cotimes[E_0]  L[m]_t \cotimes[L[m]] S \xrightarrow[]{ \widetilde{\psi_m} \otimes \Id_S} \pi_0R \cotimes[E_0]  L[m]_s \cotimes[L[m]] S \\ 
     & =  \pi_0R \cotimes[E_0] S_{g \circ s}  = A_S((g \circ s)^*\GG)
\end{split}
\]

\subsection{Reduction to Universal Deformations} \label{subsection:reduction-universal}

Let us now specialize to the case $R = R' \cotimes E$, where $R'$ is $p$-adically flat.  Let $A$ be the sheaf associated to $\pi_0(R) = \pi_0(R') \cotimes E_0$. 
We recall that the goal of \cref{section:correctness} is to show that $A = B^{\pi_0(R')}$.

The power operation $\widetilde{\psi_m}$ takes the form:
\[
\widetilde{\psi_m} \colon \pi_0(R) \cotimes[E_0]  L[m]_t = \pi_0(R') \cotimes L[m] \to \pi_0(R) \cotimes[E_0] L[m]_s = \pi_0(R') \cotimes L[m]
\]
For a formal group $G$ over $S$, as expected, 
\[
A_S(G) = \pi_0(R' \cotimes E) \cotimes[E_0] S = \pi_0(R') \cotimes E_0 \cotimes[E_0] S =  \pi_0 (R') \cotimes S = B^{\pi_0(R')}_S(G)
\]
And for a deformation of $\Frob^m$ represented by a map $g \colon L[m] \to S$, we get:
\[
    A_S(g) \colon \pi_0R \cotimes[E_0] S = \pi_0R \cotimes[E_0] L[m]_t \cotimes[L[m]] S \xrightarrow[]{\widetilde{\psi_m} \otimes \Id_S} \pi_0 R \cotimes[E_0] L[m]_s \cotimes S = \pi_0(R) \cotimes[E_0] S
\]
The construction $B^{\pi_0 (R')}$ gives the map:
\[
B^{\pi_0 (R')}_S(g) \colon \pi_0(R') \cotimes S \xrightarrow[]{\phi^m \cotimes \Id_S }  \pi_0(R') \cotimes S
\]
where $\phi \colon \pi_0 (R') \to \pi_0(R')$ is the of the lift of Frobenius on $\pi_0(R')$. 

Thus, our goal is to show that 
$\widetilde{\psi_m} = \phi^m \otimes \Id_{L[m]}$. For this, it is enough to show that the original $\psi_m \colon \pi_0(R') \cotimes E_0 \to \pi_0(R') \cotimes L[m]$ is equal to $\phi^m \cotimes t[m]$, 
since after base change along the structure map $t[m] \colon E_0 \to L[m]$ we get $\widetilde{\psi_m} = \phi^m \cotimes \Id_{L[m]}$. 

Thus we obtain the following result:

\begin{lem} \label{lem:enough-to-show-two-are-equal}
    If we show that the maps $\psi_m \colon \pi_0 R \cotimes E_0 \to \pi_0R  \cotimes L[m]$ and the map $\phi^m \cotimes t[m]$ agree, then we can deduce that diagram (\ref{diag:main-goal}) commutes, as desired. 

    In other words, we need to see that the (reduced) $p^m$-power operation of $R \cotimes E$ is the same as the power operation of $E$ tensored with the $m$-th iteration of the Tate-valued Frobenius of $R$.
\end{lem}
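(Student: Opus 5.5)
The lemma is a bookkeeping statement: once the universal structure maps $\psi_m$ are identified, the whole sheaf $A$ is determined. So the plan is to unwind how Rezk's sheaf $A$ attached to $\pi_0(R\cotimes E)$ is built from the $\psi_m$, and compare it term by term with $B^{\pi_0 R}$ from \cref{const:sheaf-on-def}.

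\textbf{Objects.} We have already seen that on objects $A_S(G)=\pi_0R\cotimes S=B^{\pi_0R}_S(G)$, using \cref{cor:pi-0-of-flat-tensor-E}. We also need the coherence isomorphisms along ring maps $S\to S'$ to agree. Both are the canonical base-change identifications $(\pi_0R\cotimes S)\cotimes[S]S'\cong \pi_0R\cotimes S'$, so this is immediate.

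\textbf{Morphisms.} A deformation $g$ of $\Frob^m$ over $S$ is classified by a map $g\colon L[m]\to S$. The map $A_S(g)$ is obtained from $\widetilde{\psi_m}$ by base change along $g$, and $\widetilde{\psi_m}$ is itself the $L[m]_t$-linear extension of $\psi_m$. Assume $\psi_m=\phi^m\cotimes t[m]$. Then on $\pi_0R\cotimes[E_0]L[m]_t=\pi_0R\cotimes L[m]$ we compute
\[
\widetilde{\psi_m}(a\otimes e\otimes \ell)=\phi^m(a)\otimes t[m](e)\,\ell .
\]
Since $e\otimes\ell$ was identified with $t[m](e)\ell$ in the source, this map is $\phi^m\cotimes\Id_{L[m]}$. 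Base changing along $g$ gives $A_S(g)=\phi^m\cotimes\Id_S=B^{\pi_0R}_S(g)$.

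Here one must check that the completions cause no trouble. The relevant maps are continuous: $s[m]$ and $t[m]$ respect the adic topologies, and $L[m]$ is finite free over $E_0$ via $s$ by \cref{rmrk:morava-e-of-b-sigma-is-free}. So it suffices to check equality on the dense image of the uncompleted tensor product.

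\textbf{Conclusion.} The two objects of $\Sh(\Def,\Alg)$ agree on all objects, morphisms and coherence data, hence are equal. The functor $\Alg_{\TT}\to\Sh(\Def,\Alg)$ is fully faithful on $p$-torsion free algebras, and $\pi_0(R\cotimes E)=\pi_0R\cotimes E_0$ is $p$-torsion free. Therefore the $\TT$-algebra structure on $\pi_0(R\cotimes E)$ is the one corresponding to $B^{\pi_0R}$.

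For morphisms of flat ring spectra $R\to R'$, \cref{rmrk:maps-of-gamma-rings} shows that maps of $\TT$-algebras are just maps of the underlying rings. The induced map on both sides is the base change $\pi_0R\cotimes E_0\to\pi_0R'\cotimes E_0$, so diagram (\ref{diag:main-goal}) commutes on morphisms as well.

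The only point needing care is the identification of $\widetilde{\psi_m}$ with $\phi^m\otimes\Id$ through the two different $E_0$-structures $s$ and $t$ on $L[m]$. I would handle it by writing everything on elementary tensors, as above, and then extending by continuity.
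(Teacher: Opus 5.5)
Your proposal is correct and follows the paper's own argument. The paper likewise notes that $A_S(G)=\pi_0R\cotimes S=B^{\pi_0R}_S(G)$ and that $A_S(g)$ is the base change along $g$ of $\widetilde{\psi_m}$, which becomes $\phi^m\cotimes\Id_{L[m]}$ once $\psi_m=\phi^m\cotimes t[m]$, and it handles morphisms via \cref{rmrk:maps-of-gamma-rings}; your additional checks (coherence isomorphisms, continuity, and full faithfulness on $p$-torsion-free objects) only spell out what the paper leaves implicit.
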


\begin{proof}
   By the previous discussion, this data determines the functor $\Alg_{\delta} \to \Sh(\Def, \Alg)$ on objects, and shows that the diagram commutes on objects. Commutativity on morphisms is immediate, since morphisms of $\TT$-algebras are just morphisms of the underlying rings (\cref{rmrk:maps-of-gamma-rings}), where both functors correspond to (completed) base-change to $E_0$. 
\end{proof}
Suppressing the index $m$ from $t[m]$, we need to show $\psi = \phi^m \cotimes t \colon \pi_0(R') \cotimes E_0 \to \pi_0(R') \cotimes L[m]$.

\subsection{Abelian Subgroups} \label{subsection:abelian-subgroups}

Our next step is to show that the power operation is determined by its restriction to abelian subgroups of $\Sigma_{p^m}$. That is, we replace the power operations $\pi_0R \to \pi_0R \cotimes L[m]$ with restricted power operations $\pi_0R \to \pi_0R \cotimes L[A]$, where $A$ is an abelian $p$-group. These were defined in \cref{def:power-operation}. Our main tool will be \cref{cor:injective-to-abelian}.

For any subgroup $A \subseteq \Sigma_{p^m}$, there is a restriction map $R^0(B\Sigma_{p^m}) \to R^0(BA)$. We will need the following result:

\begin{thm}[\cite{Rezk09}]
    The transfer ideal in $R^0(B\Sigma_{p^m})$ is generated by the images of the transfer maps from subgroups $\Sigma_{p^m-p^k} \times \Sigma_{p^k}$ for $0 \le k \le m-1$.
\end{thm}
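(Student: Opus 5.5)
The plan is to reduce to the standard description of the transfer ideal via Young subgroups, and then cut the list down to the subgroups $\Sigma_{p^m-p^k}\times\Sigma_{p^k}$ using a $p$-adic index argument.

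\textbf{Setup.} Here the transfer ideal is understood in the sense of \cite{Rezk09} and \cite{Strickland98}. There it is generated by transfers from the proper subgroups that do not act transitively on $\{1,\dots,p^m\}$. Every such subgroup lies in a Young subgroup $\Sigma_i\times\Sigma_j$ with $i+j=p^m$ and $0<i<p^m$, because a non-transitive subgroup preserves a decomposition of the set into an orbit and its complement. By transitivity of transfers, $\operatorname{tr}_H^{\Sigma_{p^m}}=\operatorname{tr}_{\Sigma_i\times\Sigma_j}^{\Sigma_{p^m}}\circ\operatorname{tr}_H^{\Sigma_i\times\Sigma_j}$, so $I_{tr}$ is generated by the images of $\operatorname{tr}_{\Sigma_i\times\Sigma_j}^{\Sigma_{p^m}}$ for $0<i<p^m$. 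Using the symmetry $\Sigma_i\times\Sigma_j\cong\Sigma_j\times\Sigma_i$ (they are conjugate in $\Sigma_{p^m}$), we may order the factors as we like.

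\textbf{Choosing the smaller subgroup.} Fix $0<i<p^m$ and let $p^k$ be the lowest nonzero digit in the base-$p$ expansion of $i$; then $k\le m-1$. Set
\[
K=\Sigma_{p^k}\times\Sigma_{i-p^k}\times\Sigma_j\subseteq\Sigma_i\times\Sigma_j .
\]
Its index is $\binom{i}{p^k}$. By Lucas' theorem this is congruent mod $p$ to the $k$-th base-$p$ digit of $i$, since all lower digits of $i$ vanish, so it is prime to $p$. Moreover $K$ is contained in $\Sigma_{p^k}\times\Sigma_{p^m-p^k}$.

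\textbf{The unit argument.} Let $u=\operatorname{tr}_K^{\Sigma_i\times\Sigma_j}(1)\in R^0(B(\Sigma_i\times\Sigma_j))$. Under restriction to the trivial subgroup (augmentation), $u$ maps to the index $\binom{i}{p^k}$ times the unit, which is invertible in $\pi_0R$ since $\pi_0R$ is a $p$-complete (complete local, in the relevant sense) ring. Since $R^0(B(\Sigma_i\times\Sigma_j))$ is complete with respect to the augmentation ideal, $u$ is therefore a unit. Frobenius reciprocity for any $x$ gives
\[
x = u^{-1}u\,x=u^{-1}\operatorname{tr}_K(\operatorname{res}_K x)=\operatorname{tr}_K\bigl(\operatorname{res}_K(u^{-1}x)\bigr).
\]
Hence
\[
\operatorname{tr}_{\Sigma_i\times\Sigma_j}^{\Sigma_{p^m}}(x)=\operatorname{tr}_{\Sigma_{p^k}\times\Sigma_{p^m-p^k}}^{\Sigma_{p^m}}\Bigl(\operatorname{tr}_K^{\Sigma_{p^k}\times\Sigma_{p^m-p^k}}\operatorname{res}_K(u^{-1}x)\Bigr),
\]
which lies in the image of the transfer from $\Sigma_{p^m-p^k}\times\Sigma_{p^k}$. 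The reverse inclusion is trivial, and this proves the claim.

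\textbf{Main obstacle.} I expect the delicate step to be justifying that $u$ is a unit. This needs completeness of $R^0(BG)$ along the augmentation ideal, or alternatively the ring structure, Frobenius reciprocity and the $K(n)$-local Künneth isomorphism \eqref{eq:kunneth-plus-transfer}, to reduce to the case $R=E$, where it is standard (\cite{Strickland98}). The other point to be careful about is the convention for which subgroups enter $I_{tr}$, as discussed in the setup.
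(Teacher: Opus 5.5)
Your proof is correct. The paper gives no argument of its own for this statement; it simply quotes \cite{Rezk09}. Your proposal therefore supplies a self-contained proof, and it is the standard one. You reduce to Young subgroups. For $0<i<p^m$ with $k=v_p(i)$, you pass to $K=\Sigma_{p^k}\times\Sigma_{i-p^k}\times\Sigma_{p^m-i}$, whose index $\binom{i}{p^k}$ in $\Sigma_i\times\Sigma_{p^m-i}$ is prime to $p$ by Lucas. You then absorb that index with the projection formula $\mathrm{tr}_K(\mathrm{res}_K\,y)=y\cdot\mathrm{tr}_K(1)$. This even gives a sharper conclusion: the image of each single transfer from $\Sigma_i\times\Sigma_{p^m-i}$ lies inside the image of the single transfer from $\Sigma_{p^k}\times\Sigma_{p^m-p^k}$.

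Two comments. First, your care about the convention matters. Suppose one used the literal definition of $I_{tr}$ in Definition~\ref{def:power-operation}, which takes transfers from \emph{all} proper subgroups. Apply your own unit argument to a Sylow $p$-subgroup of $\Sigma_{p^m}$, which is proper and of index prime to $p$ once $p^m\ge 3$; it shows $1\in I_{tr}$. So the statement only makes sense with the Strickland--Rezk convention you adopt (Young, equivalently non-transitive, subgroups). That is also the convention the paper implicitly uses right afterwards.

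Second, on the step you flag: asserting $J$-adic completeness outright needs more justification than you give. It is cleaner to argue as follows.
\begin{itemize}
\item $u$ is the image of $\mathrm{tr}_K(1)\in E^0(B(\Sigma_i\times\Sigma_{p^m-i}))$ under the unit map $E\to R$, and ring maps preserve units, so it suffices to treat $E$.
\item For a finite group $G$, $E^0(BG)$ is complete and Hausdorff for the skeletal filtration $F_\bullet$. The relevant $\lim^1$ vanishes because each $E^*(BG^{(k)})$ is finitely generated over the complete Noetherian local ring $E_0$ in each degree.
\item This filtration is multiplicative and $F_1=J$. Hence every $j\in J$ is topologically nilpotent, and $a+j$ is a unit whenever $a\in E_0^{\times}$. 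In particular, $E^0(BG)$ is local.
\end{itemize}
Since $\binom{i}{p^k}$ is a unit in $\ZZ_p\subseteq E_0$, $u$ is a unit, and the rest of your argument goes through as written.
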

In particular, the transfer ideal is generated by transfers from subgroups acting non-transitively. 

\begin{lem}
    Let $A \subsetneqq \Sigma_{p^m}$ be a proper abelian subgroup acting transitively. 
    The image of the transfer ideal in $R^0(B\Sigma_{p^m})$ under the restriction $R^0(B\Sigma_{p^m}) \to R^0(BA)$ lands inside the transfer ideal of $R^0(BA)$.
\end{lem}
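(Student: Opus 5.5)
The natural tool is the double coset formula for restriction of transfers. By the preceding theorem of \cite{Rezk09}, the transfer ideal of $R^0(B\Sigma_{p^m})$ is generated by the elements $\mathrm{tr}_{H}^{\Sigma_{p^m}}(y)$ with $H = \Sigma_{p^m-p^k}\times\Sigma_{p^k}$, $0\le k\le m-1$, and $y \in R^0(BH)$. Restriction $\mathrm{res}^{\Sigma_{p^m}}_A$ is a ring homomorphism. Hence it suffices to show that each generator restricts into the transfer ideal $I_{tr}\subseteq R^0(BA)$; the image of the whole ideal then lies in the ideal generated by these restrictions, which is contained in $I_{tr}$.

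For a generator, I apply the Mackey double coset formula, valid for any (Borel-equivariant, i.e.\ $R^0(B-)$) cohomology theory:
\[
\mathrm{res}^{\Sigma_{p^m}}_A\,\mathrm{tr}_{H}^{\Sigma_{p^m}}(y)=\sum_{AgH\in A\backslash \Sigma_{p^m}/H}\mathrm{tr}^{A}_{A\cap gHg^{-1}}\,\mathrm{res}^{gHg^{-1}}_{A\cap gHg^{-1}}\,c_g^*(y).
\]
This reduces the lemma to a group-theoretic claim: every intersection $A\cap gHg^{-1}$ is a proper subgroup of $A$. Once this holds, each summand is a transfer from a proper subgroup of $A$ and so lies in $I_{tr}\subseteq R^0(BA)$.

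For the group-theoretic claim, note that $gHg^{-1}$ is the stabilizer of the partition of $\{1,\dots,p^m\}$ into the blocks $g\{1,\dots,p^m-p^k\}$ and $g\{p^m-p^k+1,\dots,p^m\}$. The two blocks have different sizes, because $p^k<p^m-p^k$ when $p$ is odd, or when $p=2$ and $k<m-1$. So $gHg^{-1}$ is exactly the setwise stabilizer of the block $g\{p^m-p^k+1,\dots,p^m\}$, which is a nonempty proper subset. If $A\subseteq gHg^{-1}$, then $A$ would preserve this nonempty proper subset and could not act transitively, contradicting the hypothesis. Hence $A\cap gHg^{-1}\subsetneqq A$.

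The step that needs care is the equal-size case $p=2$, $k=m-1$, where $H=\Sigma_{2^{m-1}}\times\Sigma_{2^{m-1}}$. Here $gHg^{-1}$ is still the subgroup preserving each of the two blocks individually, not the stabilizer of the unordered partition, so the same argument applies: $A\subseteq gHg^{-1}$ would preserve a proper nonempty subset, which is impossible for a transitive $A$.

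I expect the only real obstacle to be justifying the double coset formula for $R^0(B-)$ with $R$ a $K(n)$-local $E$-algebra. It holds for any spectrum $R$ via the usual transfer and Mackey functor structure on $R^{h(-)}$, since it is a formula about the composite of restriction and transfer between classifying spaces. I would cite it as standard. Note also that this argument never uses that $A$ is abelian or that $A$ is proper; it uses only transitivity.
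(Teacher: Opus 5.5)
Your proof is correct and follows the paper's argument: you reduce, via Rezk's generators and the double coset formula, to showing $A\cap gHg^{-1}\subsetneqq A$, which holds because $gHg^{-1}$ preserves a nonempty proper subset of $\{1,\dots,p^m\}$ while $A$ acts transitively. The case split on block sizes is harmless but unnecessary, since a Young subgroup $\Sigma_a\times\Sigma_b$ is always exactly the setwise stabilizer of either block, including when the two blocks have equal size.
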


\begin{proof}
This follows from the double coset formula: the restriction to $A$ of a transfer from a subgroup $K$ is a direct sum of transfer from subgroups of the form $A \cap gKg^{-1}$, where $g \in \Sigma_{p^m}$. Since we know we may assume $K$ acts non-transitively, and hence so does $gKg^{-1}$, it follows that $A \cap gKg^{-1} \subsetneqq A$, so the transfer is proper and lands inside the transfer ideal of $R^0(BA)$. 
\end{proof}

Thus we obtain a map:

\begin{equation} \label{map:injective-to-abelian}
    R^0(B\Sigma_{p^m})/(I_{tr}) \to \bigoplus_{A\subseteq \Sigma_{p^m}} R^0(BA)/(I_{tr}) \to \bigoplus_{A\subseteq \Sigma_{p^m}} \left( R^0(BA)/(I_{tr}) \right)^{\free}
\end{equation}

where the sum ranges over abelian subgroups of order $p^m$ acting transitively. 

\begin{thm} \label{thm:injective-to-abelian}
    When $R = E$, the map (\ref{map:injective-to-abelian}) is an injection. 
\end{thm}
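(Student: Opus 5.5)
The plan is to prove injectivity after a faithful base change to an algebraically closed field of characteristic $0$. There the source becomes a product of copies of that field, indexed by finite subgroups of the formal group, and each such point factors through one of the abelian summands.

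First I reduce to a statement about points. By \cref{rmrk:morava-e-of-b-sigma-is-free}, $L[m]=E^0(B\Sigma_{p^m})/(I_{tr})$ is a finite free $E_0$-module. Let $\overline{K}$ be an algebraic closure of $\operatorname{Frac}(E_0)$. Then $L[m]\to L[m]\otimes_{E_0}\overline{K}$ is injective. Moreover each target summand $(E^0(BA)/(I_{tr}))^{\free}$ maps to its own base change to $\overline{K}$, since we have killed torsion. So it suffices to show: if $x\in L[m]$ maps to zero in every $(E^0(BA)/(I_{tr}))^{\free}$, then $x$ vanishes under every $E_0$-algebra map $L[m]\to\overline{K}$, and $L[m]\otimes_{E_0}\overline{K}$ is reduced.

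Second, I use Strickland's description \cite{Strickland98}: $\Spf L[m]=\mathrm{Sub}_{p^m}(\GG)$, the scheme of subgroups of order $p^m$ of the universal deformation $\GG$. After inverting $p$, $\GG[p^m]$ is finite étale, so $\mathrm{Sub}_{p^m}(\GG)$ is finite étale over $E_0[1/p]$. Hence $L[m]\otimes\overline{K}$ is a finite product of copies of $\overline{K}$. Its points are the subgroups $H\subseteq \GG(\overline{K})_{\mathrm{tors}}\cong(\QQ_p/\ZZ_p)^n$ of order $p^m$, and every such $H$ is a finite abelian $p$-group.

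Third, for such an $H$ I produce a transitive abelian $A\subseteq\Sigma_{p^m}$ through which the point factors. Set $A=H^*$ (the Pontryagin dual), acting on itself by translations via a chosen bijection $A\cong\{1,\dots,p^m\}$; this action is transitive. By Hopkins--Kuhn--Ravenel and Ando--Hopkins--Strickland, $\Spf E^0(BA)=\mathrm{Hom}(A^*,\GG)$. Modding out the transfer ideal and torsion cuts out, at least generically, the closed subscheme $\mathrm{Level}(A^*,\GG)$ of injective homomorphisms (level structures). Under these identifications, the restriction map $E^0(B\Sigma_{p^m})\to E^0(BA)$ corresponds to sending $\ell\colon A^*\to\GG$ to the divisor $\sum_{a\in A^*}[\ell(a)]$. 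On $\mathrm{Level}$ this divisor is the subgroup $\ell(A^*)$. The isomorphism $A^*\cong H\subseteq\GG(\overline{K})$ is a $\overline{K}$-point of $\mathrm{Level}(A^*,\GG)$ whose image is the point $H$ of $\mathrm{Sub}_{p^m}(\GG)$. Hence the evaluation $L[m]\to\overline{K}$ at $H$ factors through $(E^0(BA)/(I_{tr}))^{\free}$, so $x$ vanishes at $H$. Since $H$ was arbitrary and $L[m]\otimes\overline{K}$ is reduced, $x=0$.

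The main obstacle is the third step: rigorously identifying $(E^0(BA)/(I_{tr}))^{\free}$ with (a torsion-free quotient of) the ring of $\mathrm{Level}(A^*,\GG)$, and matching the restriction map from $\Sigma_{p^m}$ with the ``image subgroup'' map, including compatibility with the chosen embedding $A\hookrightarrow\Sigma_{p^m}$. I would first try to do this through HKR character theory. After base change to the HKR ring, $E^0(BG)$ becomes class functions on commuting $n$-tuples of $p$-power elements of $G$. Quotienting by transfers from proper subgroups keeps exactly the tuples that generate $G$, and for $\Sigma_{p^m}$ one keeps the tuples generating a transitive abelian subgroup. In that language, injectivity is the tautology that a class function supported on transitive tuples is detected by restricting to the abelian subgroups the tuples generate.
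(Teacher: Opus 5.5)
Your argument is correct, but it takes a different route from the paper's.

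The paper proves the statement entirely by citation. It identifies $\Spec L[m]$ with $\mathrm{Sub}_{p^m}(\GG)$, and identifies the restriction map with the image-subgroup map $\mathrm{Level}(A,\GG)\to\mathrm{Sub}_{p^m}(\GG)$ (via \cite{HS20} and Proposition~\ref{prop:topology-of-level}). It then quotes Strickland's integral result \cite[Theorem 12.4]{Strickland97} that $\coprod_A \mathrm{Type}(A,\GG)\to\mathrm{Sub}_{p^m}(\GG)$ is injective on rings, together with $\mathcal{O}_{\mathrm{Type}}=\mathcal{O}_{\mathrm{Level}}^{\mathrm{Aut}(A)}\subseteq\mathcal{O}_{\mathrm{Level}}$.

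You never need $\mathrm{Type}$ or Theorem 12.4. Freeness of $L[m]$ lets you test injectivity on the $\overline K$-points of a reduced finite $\overline K$-algebra. The only geometric input left is the easy fact that every subgroup $H$ of order $p^m$ is the image of a level structure $A^*\cong H$; in characteristic $0$ any injective homomorphism is a level structure, since $P$ has distinct roots. This reproves exactly the part of Strickland's theorem that is needed, and it makes clear why injectivity holds.

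The cost is two points you assert rather than prove.

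First, the étaleness of $\mathrm{Sub}_{p^m}(\GG)$ after inverting $p$, and the description of its $\overline K$-points as subgroups of $\GG[p^m](\overline K)$, require extending its functor of points to non-nilpotent test rings. The paper does this for $\mathrm{Level}$ in Lemma~\ref{lem:same-global-funs}. You can sidestep it: your Step 3 already produces a $\overline K$-point of $\Spec L[m]$ for each $H$, and distinct $H$ give distinct divisors. Strickland's computation says $\mathrm{rank}_{E_0}L[m]$ equals the number of such $H$. That forces $L[m]\otimes_{E_0}\overline K$ to be reduced, with these as its only points.

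Second, in Step 3 you only need two facts: that your specific injective $\ell$ kills $I_{tr}$, and that restriction sends $\ell$ to $\sum_{a}[\ell(a)]$. These are the same \cite{HS20} inputs the paper uses. Alternatively, they follow from the HKR transfer formula, since for abelian $A$ the character of $\mathrm{tr}_B^A(y)$ vanishes at any $\alpha$ with $\mathrm{im}\,\alpha\not\subseteq B$.

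Your HKR formulation is the cleanest version of the whole argument. After the faithfully flat base change to the HKR ring, $L[m]$ becomes class functions on transitive $\alpha\colon\ZZ_p^n\to\Sigma_{p^m}$, and injectivity is the tautology you describe.
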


This is stated in \cite{HS20}, but for completion we will also prove this later, in \cref{subsection:calculations}. 

\begin{cor} \label{cor:injective-to-abelian}
    Let $R = R' \cotimes E$, where $R'$ is $p$-adically flat over $\SSS$. Then the map:
    \[
    R^0(B\Sigma_{p^m})/(I_{tr}) \to \bigoplus_{A\subseteq \Sigma_{p^m}} \left( R^0(BA)/(I_{tr}) \right) \to \bigoplus_{A\subseteq \Sigma_{p^m}} \left( R^0(BA)/(I_{tr}) \right)^{\free}
    \]
is injective, where $A$ ranges again ranges over abelian subgroups of order $p^m$, acting transitively. 
\end{cor}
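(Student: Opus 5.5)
The plan is to deduce the statement from \cref{thm:injective-to-abelian} by base change along the $p$-adically flat ring $\pi_0 R'$. The first step identifies every ring in the map with a completed tensor product. By the Künneth isomorphism (\ref{eq:kunneth-plus-transfer}) with $X = *$, and by \cref{cor:pi-0-of-flat-tensor-E},
\[
R^0(B\Sigma_{p^m})/(I_{tr}) \cong \pi_0 R \cotimes[E_0] E^0(B\Sigma_{p^m})/(I_{tr}) \cong \pi_0 R' \cotimes E^0(B\Sigma_{p^m})/(I_{tr}).
\]
The same argument applies to each abelian $A$, since $E^0(BA)$ is finite free over $E_0$. Naturality of the Künneth isomorphism in the space variable makes restriction and transfer correspond to $\Id_{\pi_0R'} \cotimes(\text{restriction})$ and $\Id_{\pi_0R'}\cotimes(\text{transfer})$. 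The transfer ideal is the image of a map of modules $\bigoplus_H E^0(BH) \to E^0(BG)$. Because the completed tensor product with $\pi_0 R'$ preserves cokernels of maps between finitely generated $E_0$-modules, it carries this image onto the transfer ideal on the $R$ side.

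The second step handles the quotient by torsion. I claim that $(\pi_0R' \cotimes M)^{\free} = \pi_0 R' \cotimes M^{\free}$ for a finitely generated $E_0$-module $M$. Write $0 \to M_{tors} \to M \to M^{\free} \to 0$, where $M_{tors}$ is the $p$-power torsion. Here $M^{\free}$ is finitely generated and $p$-torsion-free. Since $\pi_0 R'$ is the $p$-completion of a free $\ZZ_p$-module, $\pi_0R' \cotimes(-)$ is exact on finitely generated $E_0$-modules. Indeed, $\pi_0R' \cotimes M \cong \prod$-type completion of $\bigoplus M$, and $\frakm$-adic completion is exact on such modules because they are finitely generated over the noetherian ring $E_0$, with Artin--Rees applying to the direct sum. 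Exactness gives that $\pi_0R' \cotimes M_{tors}$ is torsion and $\pi_0R'\cotimes M^{\free}$ is torsion-free. Taking the quotient by torsion therefore yields the claim.

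The third step puts these together. Combining the first two steps, the map in question is $\Id_{\pi_0R'} \cotimes f$, where $f$ is the map (\ref{map:injective-to-abelian}) for $R = E$. The domain of $f$ is finite free over $E_0$ by \cref{rmrk:morava-e-of-b-sigma-is-free}, and its target is finitely generated. By \cref{thm:injective-to-abelian}, $f$ is injective. Exactness of $\pi_0R'\cotimes(-)$ on finitely generated modules then makes $\Id \cotimes f$ injective. More concretely, $\pi_0R' \cotimes N$ is the $\frakm$-adic completion of $\bigoplus_{a} N$, injectivity of $\bigoplus f$ is clear, and completion preserves injections between finitely generated-type modules by Artin--Rees.

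I expect the main obstacle to be this exactness and completion step. One has to justify carefully that completing an infinite direct sum preserves the injection and commutes with the torsion quotient. My first approach would be to reduce to the fact that for a finitely generated $E_0$-module $N$ the completion of $\bigoplus_a N$ is the submodule of $\prod_a N$ consisting of families tending to $0$. This description is visibly exact, because it is computed elementwise in $N$ and finitely generated submodules carry the induced topology by Artin--Rees.
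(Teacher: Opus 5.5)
Your proposal is correct and follows the paper's argument. You identify the map for $R = R'\cotimes E$ as the base change of the map (\ref{map:injective-to-abelian}) for $E$ along $\pi_0R'\cotimes E_0$, check that this base change respects the transfer quotient and $(-)^{\free}$, and deduce injectivity from \cref{thm:injective-to-abelian} by flatness, all as the paper does. The paper only asserts the flatness and the commutation with $(-)^{\free}$; you justify them, proving exactness of $\pi_0R'\cotimes(-)$ on finitely generated $E_0$-modules via Artin--Rees, and the $(-)^{\free}$ step via $0\to M_{tors}\to M\to M^{\free}\to 0$, which correctly uses exactness rather than just torsion-freeness of $\pi_0R'$.
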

\begin{proof}

    First, note that $E^0(B\Sigma_{p^m})/(I_{tr})$ is free over $E_0$, in particular $p$-torsion free (see \cite{Rezk09}). Hence 
\[R^0(B\Sigma_{p^m})/(I_{tr} ) = \pi_0(R') \cotimes  E^0(B\Sigma_{p^m})/(I_{tr})
    \]
    is also $p$-torsion free.
    
    Finally, we use injectivity of the map (\ref{map:injective-to-abelian}) which holds for $E$, along with base change to $\pi_0R' \cotimes E_0$, which is flat over $E_0$, thus preserves injectivity, and we use the fact that $\pi_0(R')$ is $p$-torsion free, hence tensoring with it commutes with the $(-)^{\free}$ functor.
\end{proof}
Next, we consider the following diagram:
\[\begin{tikzcd}
	{\pi_0R} & {R^0(B\Sigma_{p^m})} & {R^0(B\Sigma_{p^m}) / (I_{tr})} & {(R^0(B\Sigma_{p^m}) / (I_{tr}))^{\free}} \\
	& {R^0(BA)} & {R^0(BA)/(I_{tr})} & {\left( R^0(BA)/(I_{tr}) \right)^{\free}}
	\arrow["{{P_m}}", from=1-1, to=1-2]
	\arrow["{{P_A}}"', from=1-1, to=2-2]
	\arrow[from=1-2, to=1-3]
	\arrow[from=1-2, to=2-2]
	\arrow[equals, from=1-3, to=1-4]
	\arrow[from=1-3, to=2-3]
	\arrow[from=1-4, to=2-4]
	\arrow[from=2-2, to=2-3]
	\arrow[from=2-3, to=2-4]
\end{tikzcd}\]
To see this diagram commutes, we observe the following commutative diagram:
\[\begin{tikzcd}
	{\Omega^\infty R} & {\Omega^\infty \left(R^{\otimes p^m}\right)^{h\Sigma_{p^m}} } & {\Omega^\infty R^{h\Sigma_{p^m}}} \\
	{\Omega^\infty R} & {\Omega^\infty \left(R^{\otimes p^m}\right)^{hA} } & {\Omega^{\infty}R^{hA}}
	\arrow["\Delta", from=1-1, to=1-2]
	\arrow["m", from=1-2, to=1-3]
	\arrow[from=1-2, to=2-2]
	\arrow[from=1-3, to=2-3]
	\arrow[equals, from=2-1, to=1-1]
	\arrow["\Delta", from=2-1, to=2-2]
	\arrow["m", from=2-2, to=2-3]
\end{tikzcd}\]

This diagram commutes by the naturality of $(-)^{h\Sigma_{p^m}} \Rightarrow (-)^{hA}$. Post-composition with the first row gives $P_m$, and with the second is $P_A$ (since $A$ acts transitively, $|A| = p^m$). 

Specializing to the case $R = R' \cotimes E$, we see that the power operation $\psi_m \colon \pi_0(R' \cotimes E) \to \pi_0(R' \cotimes E) \cotimes[E_0] L[m]$ restricts to the power operation $P_A$ of $R$. Similarly, $t[m]$ restricted to $A$ is $t[A]$.

Combining the results of \cref{subsection:reduction-universal}, we get:

\begin{cor} \label{cor:enough-to-show-two-are-equal-A}
    Let $R = R' \cotimes E$, where $R'$ is a $p$-adically flat ring spectrum.
    
    Suppose we know that for all $A \subseteq \Sigma_{p^m}$ abelian and transitive, we have $\overline{P_A} = \phi^m \cotimes t[A]$, where $P_A$ is the power operation of $R$, and $t[A]$ is the power operation of $E$. 

    Then $\psi_m = \phi^m \cotimes t$, and thus, by \cref{lem:enough-to-show-two-are-equal}, the diagram (\ref{diag:main-goal}) commutes.
\end{cor}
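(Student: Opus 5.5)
The plan is to deduce $\psi_m = \phi^m \cotimes t$ from the assumed equalities $\overline{P_A} = \phi^m \cotimes t[A]$ by means of the injection of \cref{cor:injective-to-abelian}. Write
\[
\rho \colon R^0(B\Sigma_{p^m})/(I_{tr}) \to \bigoplus_{A} \left( R^0(BA)/(I_{tr}) \right)^{\free}
\]
for that map, where $A$ ranges over abelian transitive subgroups of order $p^m$. By \cref{rmrk:morava-e-of-b-sigma-is-free} and \eqref{eq:kunneth-plus-transfer}, the source of $\rho$ is identified with $\pi_0(R') \cotimes L[m]$, which is the common target of $\psi_m$ and $\phi^m \cotimes t$. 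Since $\rho$ is injective, it suffices to show that $\rho \circ \psi_m = \rho \circ (\phi^m \cotimes t)$, and this can be checked one component $A$ at a time.

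For the left-hand side I would use the commutative diagram displayed just before the statement. It shows that the $A$-component of $\rho \circ \psi_m$ is the composite $\pi_0 R \xrightarrow{P_A} R^0(BA) \to (R^0(BA)/(I_{tr}))^{\free}$, which is exactly $\overline{P_A}$. By hypothesis this equals $\phi^m \cotimes t[A]$.

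For the right-hand side, the $A$-component of $\rho \circ (\phi^m \cotimes t)$ is obtained by applying $\Id_{\pi_0 R'} \cotimes (\text{res}_A)$ after $\phi^m \cotimes t[m]$. The restriction is $E_0$-linear with respect to the $s$-structure, and it acts only on the $E$-factor of $\pi_0(R')\cotimes E^0(B\Sigma_{p^m})/(I_{tr})$. By the remark preceding the statement, $t[m]$ restricted to $A$ is $t[A]$. The component is therefore $\phi^m \cotimes t[A]$, and the two sides agree.

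The main point requiring care is that the identification $R^0(BA)/(I_{tr})^{\free} \cong \pi_0 R' \cotimes L[A]$, and the compatibility of restriction with the Künneth identifications, are used consistently on both sides. Here I would invoke the naturality of the Künneth isomorphism, the fact that $\pi_0R'$ is $p$-torsion free (so that $(-)^{\free}$ commutes with $\pi_0R'\cotimes -$), and the fact that the maps are continuous ring maps. With these in place, the equality of maps on the dense subring $\pi_0R'\otimes E_0$ extends to the completion. Once $\psi_m = \phi^m\cotimes t$ is established, \cref{lem:enough-to-show-two-are-equal} gives commutativity of diagram~(\ref{diag:main-goal}).
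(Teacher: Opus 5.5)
Your proposal is correct and is essentially the paper's argument. You use the injectivity of the map in \cref{cor:injective-to-abelian}, identify the $A$-component of $\psi_m$ with $\overline{P_A}$ via the restriction diagram, and identify the $A$-component of $\phi^m \cotimes t$ with $\phi^m \cotimes t[A]$ by viewing the restriction as the base change of the $E$-theory restriction (through the K\"unneth identification and $p$-torsion-freeness), exactly as the paper does.
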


\begin{proof}
    Since the map of \cref{cor:injective-to-abelian} is injective, it is enough to verify the equality $\psi_m = \phi^m \cotimes t[m]$ after composing with it.

    In particular, it is enough to check that these maps agree after composing with each of the restriction maps $R^0(B\Sigma_{p^m})/(I_{tr}) \to \left( R^0(BA)/(I_{tr}) \right)^{\free}$. 
    
    By the discussion above, the power operation $\psi_m$ becomes $\overline{P_A}$. 
    
    Similarly, $\phi^m \otimes t[m]$ becomes $\phi^m \otimes t[A]$: this is because the map $R^0(B\Sigma_{p^m})/(I_{tr}) \to \left( R^0(BA)/(I_{tr}) \right)^{\free}$ is the base change of the map $E^0(B\Sigma_{p^m})/(I_{tr}) \to \left( E^0(BA)/(I_{tr}) \right)^{\free}$ to $\pi_0(R') \cotimes E_0$, by (\ref{eq:kunneth-plus-transfer}), and the fact that $\pi_0(R') \cotimes E_0$ is $p$-torsion free. 
\end{proof}
That is, we need to show the following diagram commutes:

\[\begin{tikzcd}
	{\pi_0(R' \cotimes E)} & ({(R' \cotimes E)^0(BA)/(I_{tr})})^{\free} \\
	{\pi_0(R') \cotimes E_0} & {\pi_0 (R') \cotimes L[A]}
	\arrow["{\overline{P_A}}", from=1-1, to=1-2]
	\arrow[equals, from=1-1, to=2-1]
	\arrow[equals, from=1-2, to=2-2]
	\arrow["{\phi^m \cotimes t[A]}"', from=2-1, to=2-2]
\end{tikzcd}\]

\subsection{Composition with Proper Tate} \label{subsection:comp-proper-tate}

Let $R$ be a ring spectrum. Let $A \subseteq \Sigma_{p^m}$ be some abelian subgroup of order $p^m$. Suppose $A$ is transitive. Note that $|A| = p^m$, by transitivity and commutativity of $A$.

We recall that we are interested in the power operation $\overline{P_A} \colon \pi_0 R \to \left( R^0(BA)/(I_{tr})\right)^{\free}$. Our goal in the next two subsections is to factor the $\pi_0$ of the Tate-valued Frobenius map $\pi_0R \to \pi_0(R^{\tau A})$ through $\overline{P_A}$, and to show that the resulting map $\left( R^0(BA)/(I_{tr})\right)^{\free} \to \pi_0(R^{\tau A})$ is injective in the case that $R = R' \cotimes E$, where $R'$ is $p$-adically flat.

We begin with the factorization of the non-reduced power operation.

\begin{prop} \label{prop:power-operations-factor-tate}
    Let $A$ be any finite group (not necessarily abelian). The lax symmetric monoidal transformation:
    \[
    \Omega^\infty(\Delta_A) \colon \Omega^\infty \Rightarrow \Omega^\infty T_A
    \]
    factors as:
    \[
    \Omega^\infty R \xrightarrow[]{\Delta_{\Omega^\infty R}} \left( (\Omega^\infty R)^{\times A}\right)^{hA} \to \Omega^\infty\left( (R^{\otimes A})^{hA} \right) \to \Omega^\infty \left( (R^{\otimes A})^{\tau A} \right)
    \]
    Where $T_A(R)=(R^{\otimes A})^{\tau A}$. 
\end{prop}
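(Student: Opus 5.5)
The plan is to mimic the argument of \cite[Section III.1]{NS18} for $C_p$, using the universal property from \cref{cor:tate-diagonal}: $\Id_{\Sp}$ is initial among exact lax symmetric monoidal endofunctors of $\Sp$. First I define the composite
\[
\Omega^\infty R \xrightarrow{\Delta} \left((\Omega^\infty R)^{\times A}\right)^{hA} \to \Omega^\infty (R^{\otimes A})^{hA} \to \Omega^\infty (R^{\otimes A})^{\tau A},
\]
where the middle map comes from the lax monoidal structure of $\Omega^\infty$, i.e.\ $(\Omega^\infty R)^{\times A} \to \Omega^\infty(R^{\otimes A})$, taken $A$-equivariantly and then on homotopy fixed points. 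Each map is natural in $R$ and lax symmetric monoidal for the cartesian structure on spaces: the space-level diagonal $X \to (X^{\times A})^{hA}$ is symmetric monoidal for $\times$. So the composite is a lax symmetric monoidal transformation $\Omega^\infty \Rightarrow \Omega^\infty T_A$ of functors $\Sp \to \CAlg(\mathcal{S})$-compatible data.

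The key step is to show that this transformation is a map of spectra, i.e.\ that it deloops. A natural transformation $\Omega^\infty \Rightarrow \Omega^\infty T_A$ corresponds, by Yoneda ($\Omega^\infty$ is corepresented by $\SSS$), to a point of $\Omega^\infty T_A(\SSS)$. Since $T_A$ is exact by the preceding proposition, I would use the classification of natural transformations $\Omega^\infty \Rightarrow \Omega^\infty F$ for $F$ exact. By \cite[Proposition III.1.2 and its proof]{NS18}, these agree with natural transformations $\Id \Rightarrow F$ of spectra. Both the space-level transformation above and $\Omega^\infty \Delta_A$ therefore correspond to points of $\Omega^\infty T_A(\SSS)$, namely their values on $1 \in \Omega^\infty\SSS$.

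So it suffices to check that both give the unit of $T_A(\SSS) = (\SSS^{\otimes A})^{\tau A} = \SSS^{\tau A}$, compatibly with the lax symmetric monoidal structures. Both transformations are lax monoidal, so each sends the unit $1$ to the unit of the ring $T_A(\SSS)$. For the space-level one this holds because the diagonal of $1$ is $1^{\times A}$, and this maps to the unit of $(\SSS^{\otimes A})^{hA}$. The monoidal coherence data are then handled exactly as in NS18: the space of lax symmetric monoidal transformations $\Omega^\infty \Rightarrow \Omega^\infty T_A$ is equivalent to that of transformations $\Id \Rightarrow T_A$, which is contractible by \cite[Corollary 6.9]{Nik16}.

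The main obstacle is the delooping/uniqueness step. One must make precise that lax symmetric monoidal transformations $\Omega^\infty \Rightarrow \Omega^\infty F$, for exact lax symmetric monoidal $F$, are equivalent to those $\Id \Rightarrow F$; this is the analogue of \cite[Proposition III.1.2]{NS18}. I would first try to copy their argument verbatim, replacing $C_p$ by $A$ and $(-)^{tC_p}$ by $(-)^{\tau A}$. The only inputs it uses are that the target functor is exact, which is the preceding proposition, and that it is lax symmetric monoidal, which is \cref{prop:proper-tate-is-lax}.
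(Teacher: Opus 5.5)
Your argument is correct, but its core runs along a different line from the paper's.

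\textbf{The paper's route.} The paper's proof is one line. It asserts that the space-level composite is a lax symmetric monoidal transformation $\Omega^\infty \Rightarrow \Omega^\infty T_A$. It then uses the multiplicative Yoneda lemma in the form ``$\Omega^\infty=\mathrm{Map}(\SSS,-)$ is initial among lax symmetric monoidal functors $\Sp\to\mathcal{S}$'' \cite{Nik16}. So the space of such transformations is contractible, and the two maps agree even as lax symmetric monoidal transformations.

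\textbf{Your route.} Your main argument is instead the ordinary Yoneda argument of \cite[Section III.1]{NS18}. A natural transformation $\Omega^\infty\Rightarrow\Omega^\infty T_A$ is determined by the class in $\pi_0\SSS^{\tau A}$ of the image of $1\in\Omega^\infty\SSS$. Both maps send $1$ to the unit. This is more elementary and needs less input:
\begin{itemize}
\item it uses only naturality of the composite and unitality of both maps, not a fully coherent lax monoidal structure on the space-level composite (which the paper asserts without checking);
\item it already suffices for the only later use, the factorization of $\pi_0\phi$ through $P_A$.
\end{itemize}

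\textbf{Detours in your write-up.}
\begin{itemize}
\item Yoneda holds for any functor $\Sp\to\mathcal{S}$, so you never need exactness of $T_A$. The ``delooping'' step you call key is not needed, since the statement only concerns space-valued transformations.
\item For the monoidal refinement, comparing lax transformations $\Omega^\infty\Rightarrow\Omega^\infty T_A$ with lax transformations $\Id\Rightarrow T_A$ is unnecessary. Initiality of $\Omega^\infty$ itself gives contractibility directly, and that is exactly the paper's proof.
\item \cite[Proposition III.1.2]{NS18} only treats plain transformations, so copying it would not yield that comparison. It holds anyway, because both spaces are contractible.
\end{itemize}
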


\begin{proof}
    Both are lax symmetric monoidal transformations $\Omega^\infty \to \Omega^\infty T_A$. But $\Omega^\infty$ is initial among lax symmetric monoidal functors $\Sp \to \scS$ (\cite[Corollary 6.9]{Nik16}), thus they agree. 
\end{proof}

\begin{cor}
    Let $A$ be a finite group, and $R$ a ring spectrum. Let $\phi \colon R \to R^{\tau A}$ be the Tate-valued Frobenius map. Then $\pi_0 \phi \colon \pi_0 R \to \pi_0(R^{\tau A})$ factors as:
    \[
    \pi_0R \xrightarrow[]{P_A} \pi_0(R ^{hA}) \to \pi_0(R^{\tau A}) 
    \]
    where $P_A : \pi_0 R \to \pi_0(R^{hA})$ is the power operation with respect to $A$. 
\end{cor}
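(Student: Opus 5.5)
The plan is to apply $\pi_0$ to the factorization of \cref{prop:power-operations-factor-tate}, then postcompose with the map induced by multiplication $m\colon R^{\otimes A}\to R$, and use naturality to move multiplication past the comparison maps.

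First, by definition $\phi = \phi^A = m^{\tau A}\circ \Delta_A$. By \cref{prop:power-operations-factor-tate}, $\Omega^\infty\Delta_A$ factors as the composite
\[
\Omega^\infty R \xrightarrow{\Delta} \left((\Omega^\infty R)^{\times A}\right)^{hA} \to \Omega^\infty (R^{\otimes A})^{hA} \to \Omega^\infty (R^{\otimes A})^{\tau A}.
\]
Postcomposing with $\Omega^\infty m^{\tau A}$ therefore expresses $\Omega^\infty\phi$ as this composite followed by $m^{\tau A}$.

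Next, the canonical map $(-)^{hA}\Rightarrow(-)^{\tau A}$ is a natural transformation; by \cref{prop:proper-tate-is-lax} it is even lax symmetric monoidal. Applying naturality to the map $m\colon R^{\otimes A}\to R$ of spectra with $A$-action (where $R$ carries the trivial action) gives
\[
m^{\tau A}\circ \mathrm{can} = \mathrm{can}\circ m^{hA} \colon (R^{\otimes A})^{hA}\to R^{\tau A}.
\]
Substituting this, $\Omega^\infty\phi$ becomes the composite
\[
\Omega^\infty R \to \left((\Omega^\infty R)^{\times A}\right)^{hA} \to \Omega^\infty (R^{\otimes A})^{hA} \xrightarrow{m^{hA}} \Omega^\infty R^{hA} \to \Omega^\infty R^{\tau A}.
\]

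Finally, the first three maps of this composite are by definition the (total) power operation $P_A$, namely the space-level diagonal followed by multiplication. This matches the description used in \cref{subsection:abelian-subgroups} and \cref{def:power-operation}. Taking $\pi_0$ then yields the claimed factorization $\pi_0\phi = \pi_0(\mathrm{can})\circ P_A$.

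The only genuinely delicate point is that the identification in \cref{prop:power-operations-factor-tate} is an equivalence of lax symmetric monoidal transformations, so on $\pi_0$ it does give equality of the maps of sets. Since the power operation is not additive, this equality must be read on $\Omega^\infty$ rather than at the spectrum level, which is exactly what taking $\pi_0$ of the space-level composite does.
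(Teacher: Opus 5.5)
Your proposal is correct and is essentially the paper's proof. The paper draws the same two-square diagram and takes $\pi_0$: the left square commutes by \cref{prop:power-operations-factor-tate}, and the right square commutes by naturality of $(-)^{hA}\Rightarrow(-)^{\tau A}$ applied to the multiplication $m$, so the top row gives $\pi_0\phi$ and the bottom row gives $P_A$ followed by the canonical map.
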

\begin{proof}
    We have the following commutative diagram:
    \[\begin{tikzcd}[column sep=large]
	{\Omega^{\infty} R} & {\Omega^{\infty}((R^{\otimes A})^{\tau A})} & {\Omega^{\infty} (R^{\tau A})} \\
	{((\Omega^{\infty}R)^{\times A})^{hA}} & {\Omega^{\infty}((R^{\otimes A})^{hA})} & {\Omega^{\infty} (R^{hA})}
	\arrow["{{\Omega^{\infty}\Delta_A}}", from=1-1, to=1-2]
	\arrow["{{\Omega^{\infty}\Delta}}"', from=1-1, to=2-1]
	\arrow["m", from=1-2, to=1-3]
	\arrow[from=2-1, to=2-2]
	\arrow[from=2-2, to=1-2]
	\arrow["m"', from=2-2, to=2-3]
	\arrow[from=2-3, to=1-3]
\end{tikzcd}\]
    Where the left rectangle commutes by the previous proposition, \cref{prop:power-operations-factor-tate}, and the right rectangle commutes by naturality of $(-)^{hA} \Rightarrow (-)^{\tau A}$. 

    Taking $\pi_0$, the composition of the top row becomes $\pi_0 \phi$, and the bottom row $p_A$. 
\end{proof}

Recall that the proper Tate construction fits in the cofiber sequence:
\[
\colim_{G/H \in \text{Orb}_G}  R^{hH} \to R^{hG} \to R^{\tau G}
\]
where $\text{Orb}_G$ is the orbit category of $G$. In particular, since the transfer map $R^{hH} \to R^{hG}$ factors through the colimit, the composition:
\[
R^0(BH) \xrightarrow[]{\tr} R^0(BG) = \pi_0(R^{hG}) \xrightarrow[]{} \pi_0(R^{\tau G})
\]
is the zero map. Thus the canonical map $\pi_0(R^{hA}) \to \pi_0(R^{\tau A})$ factors canonically through the quotient by the transfer ideal.

\begin{cor}
    For a spectrum $R$ with a $G$ action, the $\pi_0$ of the canonical map $R^{hG} \to R^{\tau G}$ factors as:
    \[
    \pi_0(R^{hG}) \to \pi_0(R^{hG})/(I_{tr}) \xrightarrow[]{G_R} \pi_0(R^{\tau G})
    \]
    Where $G_R : \pi_0(R^{hG})/(I_{tr}) \to \pi_0(R^{\tau A})$ is natural in $R$. When $R$ is a ring spectrum, $G_R$ is a multiplicative map. 
\end{cor}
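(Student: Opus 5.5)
The plan is to build $G_R$ directly from the cofiber sequence recalled just above,
\[
\colim_{G/H \in \Orb_G} R^{hH} \to R^{hG} \to R^{\tau G},
\]
and then to get naturality and multiplicativity from the functoriality and lax monoidality of $(-)^{hG} \Rightarrow (-)^{\tau G}$.

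\textbf{Step 1: construction of $G_R$.} Write $c \colon \pi_0(R^{hG}) \to \pi_0(R^{\tau G})$ for $\pi_0$ of the canonical map. For every subgroup $H \subsetneqq G$, the relative transfer $R^{hH} \to R^{hG}$ factors through $\colim_{G/H \in \Orb_G} R^{hH}$, namely through the structure map of the object $G/H$. Since the composite of the two maps in a cofiber sequence is null, the composite $R^{hH} \xrightarrow{\tr} R^{hG} \to R^{\tau G}$ is null. Hence $c$ kills the image of every proper transfer $\pi_0(R^{hH}) \to \pi_0(R^{hG})$.

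To pass to the quotient by $I_{tr}$ we need more than this. When $R$ is a ring spectrum, $I_{tr}$ is the ideal generated by these images, so we also need $c$ to be a ring map. That is exactly what \cref{prop:proper-tate-is-lax} provides. Since $(-)^{hG} \Rightarrow (-)^{\tau G}$ is a lax symmetric monoidal transformation, $R^{hG} \to R^{\tau G}$ is a map of ring spectra, and so $c$ is a ring homomorphism. A ring homomorphism that vanishes on a set of generators vanishes on the ideal they generate, so $c$ kills $I_{tr}$. We therefore obtain a unique factorization through $\pi_0(R^{hG})/(I_{tr})$; call the induced map $G_R$.

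If $R$ is only a spectrum with $G$-action, we read $(I_{tr})$ as the subgroup generated by the transfer images. In that case the factorization is immediate from the first part of this step.

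\textbf{Step 2: naturality.} Let $f \colon R \to R'$ be a $G$-equivariant map. The relative transfers are natural, so the whole cofiber sequence is natural in $R$; this is the same observation used in the proof of \cref{prop:tate-is-exact}. Consequently $\pi_0(f^{hG})$ carries the transfer images for $R$ into those for $R'$, and the square formed by $c_R$, $c_{R'}$, $f^{hG}$ and $f^{\tau G}$ commutes. Because $\pi_0(R^{hG}) \to \pi_0(R^{hG})/(I_{tr})$ is surjective, the induced square involving $G_R$ and $G_{R'}$ also commutes.

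\textbf{Step 3: multiplicativity.} Multiplicativity of $G_R$ follows from Step 1, since $c$ is a ring map and the quotient map is surjective.

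\textbf{Main obstacle.} The only step that needs care is confirming that the lax monoidal structure on $(-)^{\tau G}$ from \cref{prop:proper-tate-is-lax} really makes the canonical map a ring map. This is the clause in that proposition stating that the transformation becomes lax symmetric monoidal. Everything else in the argument is formal.
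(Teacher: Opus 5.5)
Your proposal is correct and follows the paper's own argument. The factorization comes from the proper transfers factoring through the colimit term of the cofiber sequence, and naturality and multiplicativity are inherited from the canonical map $R^{hG}\to R^{\tau G}$ together with surjectivity of the quotient map. Your extra step, which uses \cref{prop:proper-tate-is-lax} to pass from the transfer images to the ideal they generate, fills in a point the paper leaves implicit; alternatively, Frobenius reciprocity already makes the image of each transfer an ideal.
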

\begin{proof}
    The existence is discussed above. Naturality (respectively, in the case that $R$ is a ring spectrum, multiplicativity)  is a consequence of the fact that the canonical map $R^{hG} \to R^{\tau G}$ and the quotient map $\pi_0(R^{hG}) \to \pi_0(R^{hG})/(I_{tr})$ are natural (respectively, multiplicative).
\end{proof}

\begin{cor} 
    Let $A$ be a finite group, and $R$ a ring spectrum. We have the following decomposition of the Tate-valued Frobenius:
    \[\begin{tikzcd}
	{\pi_0R} & {\pi_0(R^{hA})=R^0(BA)} & {R^0(BA)/(I_{tr})} \\
	& {\pi_0(R^{\tau A})}
	\arrow["{{P_A}}", from=1-1, to=1-2]
	\arrow["{{\pi_0 \phi_A}}"', from=1-1, to=2-2]
	\arrow[from=1-2, to=1-3]
	\arrow[from=1-2, to=2-2]
	\arrow["{G_R}", from=1-3, to=2-2]
\end{tikzcd}\]
\end{cor}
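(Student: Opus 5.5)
This corollary is obtained by gluing the two preceding results; no new input is needed. The plan is to show that the triangle formed by $\pi_0\phi_A$, $P_A$ and the canonical map $\pi_0(R^{hA}) \to \pi_0(R^{\tau A})$ commutes, and then that the canonical map agrees with the quotient map followed by $G_R$.

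For the first step I would invoke the preceding corollary: applying $\pi_0$ to the commutative diagram of infinite loop spaces shows that $\pi_0\phi_A$ equals the composite $\pi_0 R \xrightarrow{P_A} \pi_0(R^{hA}) \to \pi_0(R^{\tau A})$. The one point to check is that the bottom row of that diagram, namely
\[
\Omega^\infty R \xrightarrow{\Delta} \left((\Omega^\infty R)^{\times A}\right)^{hA} \to \Omega^\infty\left((R^{\otimes A})^{hA}\right) \xrightarrow{m} \Omega^\infty(R^{hA}),
\]
induces exactly the power operation $P_A$ on $\pi_0$. This is true by the definition of $P_A$, which was used in the same form in \cref{subsection:abelian-subgroups}. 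Note also that $\phi_A$ here means $\phi^A$, the composite $m^{\tau A} \circ \Delta_A$. This identification, together with the commutativity of the left-hand rectangle, is the only step with any content, and it is already supplied by \cref{prop:power-operations-factor-tate}, which follows from $\Omega^\infty$ being initial among lax symmetric monoidal functors $\Sp \to \scS$.

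For the second step I would use the corollary just proved about $G_R$. Applied with $G = A$ acting trivially on $R$, it says the canonical map $\pi_0(R^{hA}) \to \pi_0(R^{\tau A})$ factors as the quotient $\pi_0(R^{hA}) \to R^0(BA)/(I_{tr})$ followed by $G_R$. The reason is that every transfer $R^{hH} \to R^{hA}$ with $H \subsetneq A$ factors through $\colim_{A/H\in\Orb_A} R^{hH}$, and this colimit maps to zero in the cofiber $R^{\tau A}$. Hence the right-hand triangle commutes.

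Combining the two steps, the outer diagram commutes. No step should be a real obstacle. The only care needed is bookkeeping: checking that the maps named in the diagram (the power operation, the canonical map, and $G_R$) are literally the ones appearing in the earlier corollaries. I would also note that no abelianness or flatness is used here, so the statement holds for an arbitrary finite group $A$ and ring spectrum $R$.
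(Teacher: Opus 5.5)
Your proposal is correct and matches the paper: the paper gives no separate proof because the statement is exactly the combination of the preceding corollary (factoring $\pi_0\phi^A$ as $P_A$ followed by the canonical map $\pi_0(R^{hA})\to\pi_0(R^{\tau A})$) with the corollary that factors this canonical map through $R^0(BA)/(I_{tr})$ via $G_R$. Your bookkeeping remarks are also accurate: $\phi_A$ is $\phi^A$, and the argument uses neither abelianness nor flatness.
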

Next we show this decomposition is compatible with the lax structure of the proper Tate:

\begin{prop} \label{prop:lax-of-F-no-free}
    The map:
    \[
    G_R\colon \pi_0(R^{BA})/(I_{tr})\to \pi_0(R^{\tau A})
    \]
    is compatible with the lax symmetric monoidal structure of the functor $(-)^{\tau A}$ in the following way: for spectra $R,S$, the following diagram commutes
    \[\begin{tikzcd}[column sep = large]
	{\pi_0((R \otimes S)^{BA})/(I_{tr})} & {\pi_0((R \otimes S)^{\tau A})} \\
	{\pi_0(R) \otimes \pi_0(S^{BA})/(I_{tr})} & {\pi_0(R^{\tau A}) \otimes \pi_0(S^{\tau A})}
	\arrow["{G_{R \otimes S}}", from=1-1, to=1-2]
	\arrow[from=2-1, to=1-1]
	\arrow["{\can_R \otimes G_S}"', from=2-1, to=2-2]
	\arrow[from=2-2, to=1-2]
\end{tikzcd}\]
    Where the left map is induced by the canonical map $R \otimes S^{BA} \to (R \otimes S)^{BA}$, which itself is induced by the equivalence $\Sigma^{\infty}_+BA \xrightarrow[]{\sim} \SSS \otimes \Sigma^{\infty}_+BA $.
\end{prop}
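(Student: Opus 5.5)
The plan is to reduce everything to a statement at the level of spectra, before applying $\pi_0$, and then descend to the quotients by the transfer ideals. Write $\mu_{hA}\colon R^{hA}\otimes S^{hA}\to (R\otimes S)^{hA}$ and $\mu_{\tau A}\colon R^{\tau A}\otimes S^{\tau A}\to (R\otimes S)^{\tau A}$ for the lax structure maps. By \cref{prop:proper-tate-is-lax}, the transformation $c\colon(-)^{hA}\Rightarrow(-)^{\tau A}$ is lax symmetric monoidal, so
\[
c_{R\otimes S}\circ \mu_{hA} \simeq \mu_{\tau A}\circ (c_R\otimes c_S).
\]
With the trivial action, $R^{hA}=R^{BA}$. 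The left vertical map of the statement comes from $R\otimes S^{BA}\to (R\otimes S)^{BA}$, and I will first identify this with the composite
\[
R\otimes S^{BA}\xrightarrow{\;u\otimes \Id\;} R^{BA}\otimes S^{BA}\xrightarrow{\;\mu_{hA}\;}(R\otimes S)^{BA},
\]
where $u\colon R\to R^{hA}$ is the unit of the trivial-action adjunction, i.e.\ pullback along $BA\to *$. This is a standard compatibility between the lax structure of $(-)^{hA}$ (the right adjoint of the monoidal functor $\Sp\to\Sp^{BA}$) and the assembly map. Concretely, $\mu_{hA}$ is induced by the diagonal $\Sigma^\infty_+BA\to\Sigma^\infty_+BA\otimes\Sigma^\infty_+BA$, and precomposing with $u$ collapses one factor through $\Sigma^\infty_+BA\to\SSS$. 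The counit identity for $BA\to *$ then gives exactly the map induced by $\Sigma^\infty_+BA\simeq\SSS\otimes\Sigma^\infty_+BA$.

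Next I will post-compose the composite above with $c_{R\otimes S}$ and use the lax-monoidality square. This gives
\[
\mu_{\tau A}\circ\bigl((c_R\circ u)\otimes c_S\bigr).
\]
Here $c_R\circ u$ is the trivial map $R\to R^{\tau A}$, i.e.\ $\can_R$. Applying $\pi_0$ and precomposing with the Künneth-type map $\pi_0R\otimes\pi_0(S^{BA})\to\pi_0(R\otimes S^{BA})$, the resulting square commutes before quotienting by transfers. In this square the right vertical map is the composite of the exterior product $\pi_0(R^{\tau A})\otimes\pi_0(S^{\tau A})\to\pi_0(R^{\tau A}\otimes S^{\tau A})$ with $\pi_0\mu_{\tau A}$.

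Finally, I will descend to the quotients. The bottom-left route factors through $\pi_0(S^{BA})/(I_{tr})$ because $G_S$ exists, since $c_S$ kills transfers. For the top route, I need the left vertical map to send $\pi_0R\otimes I_{tr}(S)$ into $I_{tr}(R\otimes S)$. This holds because transfers are natural in the coefficient spectrum, so $R\otimes\tr^S=\tr^{R\otimes S}\circ(\text{assembly})$ on $H$-fixed points. Alternatively, the top route $c_{R\otimes S}$ kills everything coming from $(R\otimes S)^{hH}$ anyway. With both routes well defined on the quotients, the square for $G$ follows from the unreduced square together with the surjectivity of the quotient maps.

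I expect the main obstacle to be the first step: identifying the assembly map induced by $\Sigma^\infty_+BA\simeq\SSS\otimes\Sigma^\infty_+BA$ with $\mu_{hA}\circ(u\otimes\Id)$ coherently. I would handle this by the adjunction argument, working with $\Fun(BA,\Sp)$ and the projection formula for $BA\to *$, rather than computing anything explicitly.
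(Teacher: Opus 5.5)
Your proposal is correct and is essentially the paper's argument: the unreduced square comes from lax monoidality of $(-)^{hA}\Rightarrow(-)^{\tau A}$, the left map descends to the quotients by naturality of the (Becker--Gottlieb) transfer in the coefficient spectrum, and the reduced square then follows from surjectivity of $\pi_0(S^{BA})\to\pi_0(S^{BA})/(I_{tr})$. Your explicit identification of the assembly map $R\otimes S^{BA}\to(R\otimes S)^{BA}$ with $\mu_{hA}\circ(u\otimes\Id)$, and of $\can_R$ with $c_R\circ u$, spells out a step the paper uses without comment when it asserts that the front face commutes by lax monoidality.
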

\begin{proof}
    The canonical map $R^{BA} \to R^{\tau A}$ is lax symmetric monoidal  by \cref{prop:proper-tate-is-lax}, hence so is the map $\pi_0(R^{BA}) \to \pi_0(R^{\tau A})$. 

    Suppose $H \subseteq A$ is a proper subgroup. The transfer map $S^0(BH) \to S^0(BA)$ may be realized as being induced from the Becker-Gottlieb transfer $t:\Sigma^\infty_+BA \to \Sigma^\infty_+BH$. Thus we have the following commutative square:
    \[\begin{tikzcd}
	{(R \otimes S)^{BH}} & {(R \otimes S)^{BA}} \\
	{R \otimes S^{BH}} & {R \otimes S^{BA}}
	\arrow["{t^*}", from=1-1, to=1-2]
	\arrow[from=2-1, to=1-1]
	\arrow["{\Id_R \otimes t^*}", from=2-1, to=2-2]
	\arrow[from=2-2, to=1-2]
\end{tikzcd}\]

    After applying $\pi_0$ (along with its lax symmetric structure), we see that the transfer from $BH$ in $\pi_0(R) \otimes \pi_0(S^{BA})$ lands inside the transfer ideal in $\pi_0\left((R \otimes S)^{BA}\right)$. This means that the following diagram is well defined (and commutative):

    \[\begin{tikzcd}
	{\pi_0((R \otimes S)^{BA})} & {\pi_0\left( (R \otimes S)^{BA} \right)/(I_{tr})} \\
	{\pi_0(R) \otimes \pi_0(S^{BA})} & {\pi_0(R) \otimes \left( \pi_0(S^{BA})/(I_{tr})\right)}
	\arrow[from=1-1, to=1-2]
	\arrow[from=2-1, to=1-1]
	\arrow[from=2-1, to=2-2]
	\arrow[from=2-2, to=1-2]
\end{tikzcd}\]
    Denoting $L_A(R) =  \pi_0(R^{BA})/(I_{tr})$, we observe the followimg diagram
\[\begin{tikzcd}
	& {L_A(R \otimes S)} \\
	{\pi_0((R \otimes S)^{BA})} && {\pi_0((R \otimes S)^{\tau A})} \\
	& {\pi_0(R) \otimes L_A(S)} \\
	{\pi_0(R)\otimes \pi_0(S^{BA})} && {\pi_0(R^{\tau A}) \otimes \pi_0(S^{\tau A})}
	\arrow[from=1-2, to=2-3]
	\arrow[from=2-1, to=1-2]
	\arrow[from=2-1, to=2-3]
	\arrow[shorten <=7ex, from=3-2, to=1-2]
	\arrow[shorten >=7ex, no head, from=3-2, to=1-2]
	\arrow[from=3-2, to=4-3]
	\arrow[from=4-1, to=2-1]
	\arrow[from=4-1, to=3-2]
	\arrow[from=4-1, to=4-3]
	\arrow[from=4-3, to=2-3]
\end{tikzcd}\]
    The front face commutes as $(-)^{BA} \Rightarrow (-)^{\tau A}$ is lax symmetric monoidal, and the left face commutes by the previous diagram. Moreover, since the map $\pi_0(S^{BA}) \to L_A(S)$ is surjective (and the tensor product $\pi_0(R) \otimes (-)$ preserves surjectivity), the commutativity of the two squares implies the commutativity of the third.

\end{proof}
As we are interested in power operations to $(R^0(BA)/(I_{tr}))^{\free}$, we define:

\begin{definition} \label{def:the-map-F}
    For a spectrum $R$ with an $A$-action, we define a map 
    \[
    \overline{G_R} = \overline{G_R^A} \colon \left(R^0(BA)/(I_{tr})\right)^{\free} \to \pi_0(R^{\tau A})^{\free}
    \]
    to be the application of the $(-)^{\free}$ functor on the map $G_R$. Note that $\overline{G_R}$ is natural in $R$ since $G_R$ is natural in $R$. 
\end{definition}
Let us now specialize to the case $R = R' \cotimes E$, where $R'$ is $p$-adically flat, and let $A$ be abelian. Then $R^0(BA) = \pi_0(R') \cotimes E^0(BA)$ is $p$-torsion free. In \cite[Corollary 5.10]{HKR00}, it is shown that $E^0(BA)$ is torsion free, and since $\pi_0(R')$ is $p$-adically flat over $\ZZ_p$, completed tensor product with it does not introduce $p$-torsion. Hence $\pi_0(R^{\tau A})$ is also $p$-torsion free, since by \cref{thm:tate-of-complex-oriented} it is a localization of $R^0(BA)$.

Hence the map $G_R \colon R^0(BA)/(I_{tr}) \to \pi_0(R^{\tau A})$ actually factors as:
\[
G_R \colon R^0(BA)/(I_{tr}) \to \left( R^0(BA)/(I_{tr})\right)^{\free} \xrightarrow[]{\;\overline{G_R}\;} \pi_0(R^{\tau A})^{\free} = \pi_0(R^{\tau A})
\]

\begin{cor}
    \label{cor:decomposition-of-frob}
    Let $R = R' \cotimes E$, where $R'$ is a $p$-adically flat ring spectrum. Then the following diagram commutes:
    \[\begin{tikzcd}
	{\pi_0R} & {\pi_0(R^{hA})=R^0(BA)} & {R^0(BA)/(I_{tr})} & {\left(R^0(BA)/(I_{tr})\right)^{\free}} \\
	& {\pi_0(R^{\tau A})}
	\arrow["{{{P_A}}}", from=1-1, to=1-2]
	\arrow["{{{\pi_0 \phi_A}}}"', from=1-1, to=2-2]
	\arrow[from=1-2, to=1-3]
	\arrow[from=1-2, to=2-2]
	\arrow[from=1-3, to=1-4]
	\arrow["{{G_R}}"'{pos=0.7}, from=1-3, to=2-2]
	\arrow["{{\overline{G_R}}}", from=1-4, to=2-2]
\end{tikzcd}\]
\end{cor}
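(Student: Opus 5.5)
The plan is to assemble \cref{cor:decomposition-of-frob} from the two pieces already established: the unreduced factorization $\pi_0\phi_A = G_R \circ q \circ P_A$, and the torsion-freeness of $\pi_0(R^{\tau A})$. Here $q\colon R^0(BA)\to R^0(BA)/(I_{tr})$ is the quotient map. The left triangle, together with the triangle through $R^0(BA)/(I_{tr})$, is exactly the earlier corollary giving the decomposition of the Tate-valued Frobenius for an arbitrary ring spectrum $R$ and finite group $A$. It therefore holds in particular for $R = R'\cotimes E$. What remains is to check the last triangle, $G_R = \overline{G_R}\circ u$, where $u\colon R^0(BA)/(I_{tr}) \to \left(R^0(BA)/(I_{tr})\right)^{\free}$ is the unit of the $(-)^{\free}$ adjunction.

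For this triangle I would first verify that $\pi_0(R^{\tau A})$ is $p$-torsion free. By \cite[Corollary 5.10]{HKR00}, $E^0(BA)$ is torsion free for $A$ abelian. Since $\pi_0R'$ is $p$-adically flat over $\ZZ_p$, the ring $R^0(BA) = \pi_0(R')\cotimes E^0(BA)$ is also $p$-torsion free; this identification uses the Künneth isomorphism, valid because $E^0(BA)$ is finite free over $E_0$. The ring spectrum $R = R'\cotimes E$ is an $E$-algebra and hence complex oriented. So \cref{thm:tate-of-complex-oriented} identifies $\pi_0(R^{\tau A})$ with a localization of $R^0(BA)$ at an Euler class, or more precisely with the degree-zero part of the localization of $\pi_*(R^{hA})$, which is evenly graded and torsion free. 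A localization of a $p$-torsion-free ring is $p$-torsion free, so $\pi_0(R^{\tau A}) = \pi_0(R^{\tau A})^{\free}$.

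Given this, I would apply naturality of the unit $u$ to the ring map $G_R$. This gives $u_{\pi_0(R^{\tau A})}\circ G_R = (G_R)^{\free}\circ u = \overline{G_R}\circ u$. Since $u_{\pi_0(R^{\tau A})}$ is the identity, this is exactly the commutativity of the rightmost triangle. Equivalently, by the universal property of $(-)^{\free}$, any map from $R^0(BA)/(I_{tr})$ to a $p$-torsion-free ring factors uniquely through $u$, and $\overline{G_R}$ is that factorization.

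The only step requiring care is the torsion-freeness of $\pi_0(R^{\tau A})$. The points to watch are that the Euler-class localization of \cref{thm:tate-of-complex-oriented} is taken in the graded ring, and that its $\pi_0$ is still a localization of a torsion-free ring. I would handle this by noting that $E^*(BA)$ is concentrated in even degrees and that a unit $u$ identifies each even degree with degree zero. Then $\pi_0$ of the localization is $R^0(BA)[e'^{-1}]$, where $e'$ is $e$ multiplied by a suitable power of $u$ to land in degree zero. This ring is visibly $p$-torsion free. Everything else is formal.
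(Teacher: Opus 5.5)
Your proposal is correct and follows the paper's argument. The left part is the earlier decomposition corollary, which holds for any ring spectrum. The last triangle comes from factoring $G_R$ through $(-)^{\free}$ once $\pi_0(R^{\tau A})$ is known to be $p$-torsion free, and the paper deduces this exactly as you do, from \cite[Corollary 5.10]{HKR00}, $p$-adic flatness of $\pi_0R'$, and \cref{thm:tate-of-complex-oriented}. Your extra care about taking $\pi_0$ of the graded Euler-class localization fills in a step the paper states without comment.
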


\begin{cor} \label{cor:lax-of-F}
    Since the $(-)^{\free}$ construction is symmetric monoidal, \cref{prop:lax-of-F-no-free} implies the following diagram commutes, for ring spectra $R,S$,

    \[\begin{tikzcd}[column sep=large]
	{\left( \pi_0((R \otimes S)^{BA})/(I_{tr}) \right)^{\free}} & {\pi_0((R \otimes S)^{\tau A})^{\free}} \\
	{\pi_0(R)^{\free} \otimes \left( \pi_0(S^{BA})/(I_{tr}) \right)^{\free}} & {\pi_0(R^{\tau A})^{\free} \otimes \pi_0(S^{\tau R})^{\free}}
	\arrow["\overline{{G_{R \otimes S}}}", from=1-1, to=1-2]
	\arrow[from=2-1, to=1-1]
	\arrow["{{\can_R \otimes \overline{G_S}}}"', from=2-1, to=2-2]
	\arrow[from=2-2, to=1-2]
\end{tikzcd}\]
    
\end{cor}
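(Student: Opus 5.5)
This follows by applying the exact functor $(-)^{\free}$ to the commutative square of \cref{prop:lax-of-F-no-free}, and then identifying the resulting square with the one claimed. Concretely, we apply $(-)^{\free}$ to the four corners and four arrows of that square. Since $(-)^{\free}$ is a functor, commutativity is preserved. By \cref{def:the-map-F}, the top arrow becomes $\overline{G_{R\otimes S}}$. The vertical arrows become the $(-)^{\free}$ of the lax structure maps, and the bottom arrow becomes $(\can_R \otimes G_S)^{\free}$.

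The remaining work is to rewrite the bottom-left and bottom-right corners as tensor products of free parts. We use that $(-)^{\free}$ is symmetric monoidal in the sense that for rings $M,N$ there is a natural map
\[
M^{\free}\otimes N^{\free} \to (M\otimes N)^{\free}
\]
compatible with the quotient maps. Both $M\otimes N \to M^{\free}\otimes N^{\free}$ and $M \otimes N \to (M\otimes N)^{\free}$ are surjective. The first kills all $p$-torsion coming from either factor. The induced comparison is an isomorphism after quotienting $M^{\free}\otimes N^{\free}$ by its own $p$-torsion, and in general it is a natural surjective map. Naturality in $M,N$ gives
\[
(f\otimes g)^{\free}\circ(\text{comparison}) = (\text{comparison})\circ (f^{\free}\otimes g^{\free}).
\]
Precomposing the $(-)^{\free}$-applied square with the comparison map at the bottom left, and postcomposing the bottom arrow with the comparison at the bottom right, therefore yields the stated square. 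Here the vertical maps are the composites of the comparison maps with the $(-)^{\free}$ of the lax structure maps. On $\pi_0(R)^{\free}$ the bottom map is $\can_R^{\free}$, and on the other factor it is $\overline{G_S}$, as required.

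The only point needing care is this compatibility of $(-)^{\free}$ with tensor products. The claim "symmetric monoidal" should be read as the existence of these natural comparison maps, which need not be isomorphisms for general abelian groups. That weaker form is all the argument uses. In the cases of interest, $R'\cotimes E$ with $R'$ $p$-adically flat, every ring involved is already $p$-torsion free, so the comparison maps are the identity and the diagram is literally the image of \cref{prop:lax-of-F-no-free}. I expect no real obstacle beyond this bookkeeping.
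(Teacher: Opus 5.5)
Your argument is correct and matches the paper's: apply $(-)^{\free}$ to the square of \cref{prop:lax-of-F-no-free} and use the natural comparison maps $M^{\free}\otimes N^{\free}\to(M\otimes N)^{\free}$, which is all the paper's ``symmetric monoidal'' remark amounts to. Two side remarks in your proposal are wrong, though neither affects the proof, since only functoriality and the existence and naturality of these maps are used: the comparison maps are always isomorphisms, because a tensor product of $p$-torsion-free abelian groups is $p$-torsion-free ($\mathrm{Tor}_1(M,N/p)=0$ when $M$ is $p$-torsion-free), and $(-)^{\free}$ is not exact.
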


\subsection{Morava E-theory of Abelian $p$-Groups} \label{subsection:calculations}

Let $R = R' \cotimes E$, where $R'$ is a $p$-adically flat ring spectrum. Fix an abelian group $A$, and let $X = \Spf(E_0)$. 

The goal of this section is to prove that the canonical map
\[
\overline{G_R} \colon \left(R^0(BA)/(I_{tr})\right)^{\free} \to \pi_0(R^{\tau A})^{\free} = \pi_0(R^{\tau A})
\]
constructed in the previous subsection, is injective. Our approach is algebro-geometric.  Geometric interpertations of $\left(R^0(BA)/(I_{tr})\right)^{\free}$ are well established in the literature (see, for example, \cite{AHS04} and \cite{HS20}), and we recall them here. We then develop a geometric interpretation of $\pi_0(R^{\tau A})$, which we use to analyze the map $\overline{G_R}$.  
% While related constructions appear in the literature, we are not aware of any explicit geometric interpretation of $\pi_0(R^{\tau A})$ in this form.
\\ \\
Let $A^* = \Hom(A, \CC^{\times})$ be the dual group of $A$. An element $a \in A$ can be viewed as a character of $A^*$ (via evaluation), and thus it gives rise to a line bundle $V_a$ on $BA^*$, i.e., a map $BA^* \to BS^1$. 
This map corresponds to an $R^0(BA^*)$-point of the formal group $G = \Spf(R^0(BS^1))$. That is, we have a map $\chi \colon A \to \Gamma(\Spf( R^0(BA^*)), G)$. 

Since $V_{a + b} = V_a \otimes V_b$, the map $\chi$ is a group homomorphism. Thus it is classified by a scheme homomorphism:
\[
\widetilde{\chi} \colon \Spf(R^0(BA^*)) \to \Hom(A, G)
\]
\begin{thm}[{\cite[Proposition 7.3]{AHS04}}] \label{thm:E-cohomology-of-abelian-groups}
    Let $R = R' \cotimes E$, where $R'$ is a $p$-adically flat ring $R'$; and let $A$ be a finite abelian group. The map $\widetilde{\chi} \colon \Spf(R^0(BA^*)) \to \Hom(A,G)$ is an isomorphism. 
\end{thm}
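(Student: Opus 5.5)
The plan is to reduce to the case $R = E$, which is the classical computation of Hopkins--Kuhn--Ravenel and Ando--Hopkins--Strickland. Then I would transport the result along the flat base change $E_0 \to \pi_0(R') \cotimes E_0$.

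\emph{Step 1: the case $R=E$ and $A$ cyclic.} Write $A^* \cong \prod_i C_{p^{k_i}}$ (the prime-to-$p$ part can be split off, since it contributes only an étale factor over $E_0$). For $A = C_{p^k}$ with $A^* \cong C_{p^k}$, the Gysin sequence for $S^1 \to BC_{p^k} \to \CP^\infty$ gives $E^0(BC_{p^k}) = E_0[\![x]\!]/([p^k](x))$, where $x$ is the coordinate on $G = \Spf E^0(\CP^\infty)$. The scheme $\Hom(\ZZ/p^k, G)$ is the $p^k$-torsion $G[p^k] = \Spf E_0[\![x]\!]/([p^k](x))$. I then check that $\chi$ sends a generator to the point $x$, so $\widetilde\chi$ is the identity on these rings.

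\emph{Step 2: the case $R=E$ and $A$ general.} By the Künneth theorem, using that each $E^0(BC_{p^k})$ is finite free over $E_0$ by Weierstrass preparation, we get $E^0(B(A_1^*\times A_2^*)) \cong E^0(BA_1^*) \cotimes[E_0] E^0(BA_2^*)$. On the other side, $\Hom(A_1\oplus A_2, G) = \Hom(A_1,G)\times_X \Hom(A_2,G)$. The map $\widetilde\chi$ is compatible with these decompositions, because $V_{(a_1,a_2)} = \pi_1^*V_{a_1}\otimes \pi_2^*V_{a_2}$ and group multiplication in $G$ corresponds to tensor product of line bundles.

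\emph{Step 3: general $R = R'\cotimes E$.} Here $G_R$ is the base change of $G_E$ along $E_0 \to \pi_0(R')\cotimes E_0$. By Corollary~\ref{cor:pi-0-of-flat-tensor-E} and the Künneth isomorphism stated above for finite free $E^0(B\Sigma_{p^m})$, which applies equally to $E^0(BA^*)$ because it is finite free, we have $R^0(BA^*) \cong \pi_0(R')\cotimes E^0(BA^*)$ naturally. The formation of $\Hom(A,G)$ also commutes with base change, and $\widetilde\chi_R$ is the base change of $\widetilde\chi_E$ since the line bundles $V_a$ come from $E$.

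The main obstacle is Step 1. There one must identify the Euler class of the pulled-back line bundle with the coordinate and confirm that the periodicity of $E$ causes no even/odd issues. Concretely, one needs the evenness of $E^*(BC_{p^k})$, which holds because $[p^k](x)$ is a non-zero-divisor.
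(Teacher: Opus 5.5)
Your argument is correct, but it takes a different route from the paper, because the paper gives no proof: it cites \cite[Proposition 7.3]{AHS04} and applies it directly to $R = R'\cotimes E$. You instead prove the statement by the standard computation:
\begin{itemize}
\item The Gysin sequence of the circle bundle $BC_{p^k}\to\CP^\infty$ identifies $E^0(BC_{p^k})$ with $E_0[\![x]\!]/([p^k](x))$, the ring of functions on $G[p^k]$. It also shows that $E^*(BC_{p^k})$ is even.
\item The K\"unneth isomorphism for finite free pieces handles products of cyclic groups.
\item Base change along $E_0\to\pi_0(R')\cotimes E_0$ passes from $E$ to $R$.
\end{itemize}

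Your route has two advantages. It is self-contained, and it makes explicit why the statement holds for $R=R'\cotimes E$ and not only for $E$, which the paper leaves to the citation. It also yields exactly the facts the paper uses right afterwards: the presentation (\ref{eq:group-cohomology}), and the formula $R^0(BA) = (\pi_0(R')\cotimes E_0)\otimes_{E_0}E^0(BA)$ used in the proof of Theorem~\ref{thm:injection-to-tate}. The citation, by contrast, is shorter and covers a broader class of even periodic theories.

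Three minor points:
\begin{itemize}
\item The prime-to-$p$ aside is unnecessary. Your Step 1 works verbatim for every cyclic group, since $E_0[\![x]\!]$ is a domain. For $m$ prime to $p$, both sides are simply $\Spf E_0$, because $[m](x)$ is $x$ times a unit.
\item Flatness of $E_0\to\pi_0(R')\cotimes E_0$ is never used. Base change of an isomorphism is an isomorphism. What you need is only that both sides commute with base change, and that is what finite freeness of $E^0(BA^*)$ (on even generators, from Steps 1--2) gives you via K\"unneth.
\item In Step 3 you can avoid the $K(n)$-local K\"unneth formula altogether. Run Steps 1--2 directly for $R$, using Weierstrass division over the $\frakm$-adically complete ring $\pi_0R$; this shows $[p^k](x)$ is a non-zero-divisor in $\pi_0R[\![x]\!]$.
\end{itemize}
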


In particular, if $A = C_{p^{m_1}} \times \cdots \times C_{p^{m_k}}$, 
we have:
\begin{equation} \label{eq:group-cohomology}
    R^0(BA^*) = \frac{\pi_0R[\![x_1, ..., x_k]\!]}{\left( [p^{m_1}](x_1),...,[p^{m_k}](x_k) \right)}
\end{equation}
Where $x_i$ is the first Chern class of the line bundle $V_{a_i}$, for $a_i$ some choice of a generator of $C_{p^{m_i}}$. Next we find $\pi_0(R^{\tau A^*})$.

\begin{prop} \label{prop:pi-0-of-proper-Tate}
    Let $A = C_{p^{m_1}} \times \cdots \times C_{p^{m_k}}$. Let $S \subseteq \pi_0(R^{BA^*})$ be the multiplicative set generated by expressions of the form:
    \[
    [i_1](x_1) +_F [i_2](x_2) +_F \cdots +_F [i_k](x_k)
    \]
    where $1 \le i_1 \le p^{m_1}-1$, $1 \le i_2 \le p^{m_2}-1$, $\ldots$, $1 \le i_k \le p^{m_k}-1$, and $+_F$ is the sum with respect to the formal group law. Then $\pi_0(R^{\tau A^*}) = S^{-1} \pi_0(R^{BA^*})$.
\end{prop}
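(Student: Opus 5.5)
The plan is to apply \cref{thm:tate-of-complex-oriented} to $R = R'\cotimes E$, which is complex oriented because $E$ is. This gives
\[
\pi_0(R^{\tau A^*}) = \pi_0(R^{BA^*})[e^{-1}], \qquad e = e(\widetilde{\rho}_{A^*}),
\]
and reduces the proposition to computing the Euler class $e$ in the presentation (\ref{eq:group-cohomology}) and comparing $\pi_0(R^{BA^*})[e^{-1}]$ with $S^{-1}\pi_0(R^{BA^*})$.

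\textbf{Computing $e$.} Since $A^*$ is abelian, its complex regular representation splits as the sum of all one-dimensional characters of $A^*$. Under double duality these are exactly the line bundles $V_a$ for $a\in A$, and the reduced representation $\widetilde{\rho}_{A^*}$ is $\bigoplus_{a\ne 0}V_a$. Euler classes are multiplicative on sums, and $e(V_a)=c_1(V_a)$. Writing $a=\sum_j i_j a_j$ and using $V_{a+b}=V_a\otimes V_b$ together with the formal group law, we get
\[
c_1(V_a)=[i_1](x_1)+_F\cdots+_F[i_k](x_k).
\]
Hence
\[
e=\prod_{0\ne (i_1,\dots,i_k)}\bigl([i_1](x_1)+_F\cdots+_F[i_k](x_k)\bigr),
\]
the product running over $0\le i_j\le p^{m_j}-1$, not all $i_j$ zero. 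Inverting a product is the same as inverting each factor, so $\pi_0(R^{\tau A^*})$ is the localization of $\pi_0(R^{BA^*})$ at the multiplicative set $S'$ generated by these factors. Clearly $S\subseteq S'$.

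\textbf{Comparing $S$ and $S'$.} It remains to show that every factor with some $i_j=0$ becomes a unit in $S^{-1}\pi_0(R^{BA^*})$. The algebraic tool I would try first is the formal group identity $u+_F v-v=u\cdot\epsilon(u,v)$ with $\epsilon$ a unit. This expresses $c_1(V_a)$ as a unit times $c_1(V_{a+b})-c_1(V_b)$, where $b$ is chosen with all coordinates nonzero so that $c_1(V_b)\in S$. One would then try to use the relations $[p^{m_j}](x_j)=0$ to exhibit a multiple of $c_1(V_a)$ lying in $S$. To test whether such a divisibility can exist, I would use the geometric criterion from \cref{thm:E-cohomology-of-abelian-groups}. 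An element is invertible in a localization if and only if it vanishes at no prime of that localization. Primes of $\pi_0(R^{BA^*})$ correspond to homomorphisms $A\to G(L)$ over fields $L$, and $c_1(V_a)$ vanishes at such a prime exactly when $a$ lies in the kernel of the homomorphism. In characteristic $p$ all torsion points of $G$ are trivial, so every element of $S$ vanishes there, and the surviving primes have $p$ inverted.

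\textbf{Main obstacle.} This comparison is where I expect the real difficulty, and possibly a genuine gap. Consider a homomorphism whose kernel is a nonzero subgroup containing no element with all coordinates nonzero, for instance the kernel $C_p\times 0\subseteq C_p\times C_p$. Such a homomorphism kills $x_1$ but no element of $S$, and the same geometric criterion shows $x_1$ is then not invertible in $S^{-1}\pi_0(R^{BA^*})$. So before finishing I would check whether the statement needs $S$ to range over all nonzero tuples $(i_1,\dots,i_k)$, as the Euler class computation produces, rather than only tuples with every $i_j\ge 1$. The first two steps go through as written in either case.
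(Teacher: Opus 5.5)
Your first two steps are exactly the paper's proof. The paper invokes \cref{thm:tate-of-complex-oriented}, decomposes $\widetilde{\rho_{A^*}}$ into the nontrivial characters $L_1^{\otimes i_1}\otimes\cdots\otimes L_k^{\otimes i_k}$, and writes the Euler class as the product of $[i_1](x_1)+_F\cdots+_F[i_k](x_k)$ over all $(i_1,\ldots,i_k)\neq(0,\ldots,0)$. It then concludes that one inverts ``each factor individually''. So the paper's argument establishes localization at your $S'$. It never reconciles this with the range $1\le i_j\le p^{m_j}-1$ in the statement. That range is a misprint, and it is harmless only when $k=1$.

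The gap you suspect is real, but it is a defect of the statement, not of your argument, and your test case settles it already for $R=E$. For $k\ge 2$, take the homomorphism $A\to C_{p^{m_k}}\hookrightarrow \GG[p^M](\overline{L})$: projection to the last factor, followed by an injection into torsion points over an algebraically closed field $\overline{L}$ of characteristic zero. Every generator of $S$ has nonzero last coordinate, so it maps to the coordinate of a nonzero point, which is a unit. Meanwhile $x_1\mapsto 0$. Hence $x_1$ is not a unit in $S^{-1}\pi_0(R^{BA^*})$. But $x_1$ divides $e(\widetilde{\rho_{A^*}})$, so it is a unit in $\pi_0(R^{\tau A^*})$.

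The correct statement takes $S$ generated by all tuples with $0\le i_j\le p^{m_j}-1$, not all zero. This is also what the paper needs later, where $\Mon(A,\GG)$, defined with the same misprinted range, is asserted to classify maps with $x(\phi(a))$ invertible for every $a\neq 0$.

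One side remark in your sketch is inaccurate, though not load-bearing. For $n\ge 2$, torsion points need not be trivial at characteristic-$p$ points other than the closed one: the height drops there and $G[p]$ acquires a nontrivial \'etale part. Correspondingly, $p$ is not a unit in $\pi_0(E^{tC_p})$. Your counterexample lives in characteristic zero, so it is unaffected.
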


\begin{proof}
We will use \cref{thm:tate-of-complex-oriented} to calculate $\pi_0(R^{\tau A^*})$ 

Let $L_i = V_{a_i}$ be the representation of $A^*$ corresponding to $a_i$. Then the tensor products $\{L_1^{\otimes i_1} \otimes \cdots \otimes L_{k}^{\otimes i_k}\}$ are the irreducible representations of $A^*$. For the reduced regular representation, we sum all of these except:
\[
1 = L_1^{\otimes 0} \otimes \cdots \otimes L_{k}^{\otimes 0}
\]

Then we have: 
\[
\begin{split}
    e(\widetilde{\rho_{A^*}}) & = e\left( \bigoplus_{i_1,...,i_k \ne 0,...,0} L_1^{\otimes i_1} \otimes \cdots \otimes L_{k}^{\otimes i_k}\right) \\
    & = \prod_{i_1,...,i_k \ne 0,...,0}  e\left( L_1^{\otimes i_1} \otimes \cdots \otimes L_{k}^{\otimes i_k} \right) \\
    & = \prod_{i_1,...,i_k \ne 0,...,0}  \left( e\left( L_1^{\otimes i_1} \right) +_F \cdots +_F \,e\left( L_{k}^{\otimes i_k} \right) \right) \\ 
    & = \prod_{i_1,...,i_k \ne 0,...,0}  \left( [i_1]\left( e(L_1) \right) +_F \cdots +_F [i_k]\left(e( L_{k}) \right) \right) \\ 
    & = \prod_{i_1,...,i_k \ne 0,...,0}  \left( [i_1]\left( x_1 \right) +_F \cdots +_F [i_k]\left(x_k \right) \right) \\ 
\end{split}
\]
Inverting this product is equivalent to inverting each factor individually.
\end{proof}

Let us now specialize to the case of Morava E-theory. Let $\GG := \Spf(E^0(\CP^{\infty}))$ be the universal deformation, which is a formal group scheme over $X = \Spf (E_0)$.
Let $A = C_{p^{m_1}} \times \cdots \times C_{p^{m_k}}$. Fix an integer $M$ such that $M > m_1,...,m_k$. We can define a (formal) subscheme $\GG[A] \subseteq \GG$ classifying homomorphisms $A \to \GG$. Since all elements of $A$ have order dividing $p^M$, every such homomorphism factors through $\GG[C_{p^M}]$. We know the ring of functions of $\GG[C_{p^M}]$ is:
\[
\scO_{\GG[C_{p^M}]} = \frac{E_0[\![x]\!]}{([p^M](x))}
\]
As the height of the group is $n$, in $[p^M](x)$, the first invertible coefficient is that of $x^{p^{Mn}}$, with all prior coefficients lying in the maximal ideal.  
Thus $E_0[\![x]\!]/([p^M](x))$ is actually finite and free. This is a consequence of the Weierstrass Preparation lemma (\cite[chapter VII, §3, Proposition 6]{Bourbaki72}).
Explcitly, we can write $[p^M](x) = u_M(x) P_M(x)$, where $u_M(x)$ is an invertible power series; $P_M(x)$ is a monic polynomial of degree $p^{Mn}$, with all coefficients except for the coefficient of $x^{p^{Mn}}$ lying in the maximal ideal of $E_0$; and $u_M,P_M$ are unique.

A monic polynomial such that all of its coefficients are in the maximal ideal, except for the leading coefficient, such as $P_M$, is called \emph{distinguished}.

We thus have:
\[
\frac{E_0[\![x]\!]}{([p^M](x))} = \frac{E_0[\![x]\!]}{(P_M(x))}= \frac{E_0[x]}{(P_M(x))}
\]
And this ring is freely generated as an $E_0$-module by $1,x,...,x^{p^{Mn}-1}$. This follows from \cite[chapter VII, §3, Proposition 5]{Bourbaki72}. 
\\\\
In particular, we may extend its functor of points to also allow non-nilpotent elements,
\begin{definition}
    Let $X = \Spf(E_0)$, and let $\GG$ be the formal group over $X$ associated to Morava $E$-theory. We define the $p^M$-torsion in $\GG$ to be the $X$-scheme:
    \[
\GG[p^M] = \Spec_{X}\left( \frac{E_0[\![x]\!]}{([p^M](x))} \right) = \Spec_X\left(  \frac{E_0[x]}{(P_M(x))} \right)
\]
    By the previous discussion, $\GG[p^M]$ is finite over $X$. In particular, the formal group law becomes a polynomial in $E_0[x]/(P_M(x))$, thus $\GG[p^M]$ inherits a structure of a finite group scheme over $X$.
\end{definition}

\begin{rmrk}
    Note that since we extended the functor of points, there is no morphism $\GG[p^M] \to \GG$. That is, the quotient map $E_0[\![x]\!] \to E_0[\![x]\!]/(P_M(x))$ does not give a map $\GG[p^M] \to \GG$, as points of $\GG[p^M]$ allow non-nilpotent images for $x$.
    We may define $\GG[p^M]_{\rest}$ as the closed subscheme of $\GG$ cut by $[p^M](x)$. Then there is a closed embedding $\GG[p^M]_{\rest} \hookrightarrow \GG$, and a morphism $\GG[p^M]_{\rest} \to \GG[p^M]$ ``forgetting" about the nilpotency. 
\end{rmrk}

We now define a subscheme $\GG[A] \subseteq \GG[p^M]^k$ classifying homomorphisms $A \to \Gamma(Y, \GG[p^M])$ for an $X$-scheme $Y$. This is the extension of the way we previously defined $\GG[A]$, allowing non-nilpotents. 

Explicitly, for $A = C_{p^{m_1}} \times \cdots \times C_{p^{m_k}}$,
\[
\GG[A] = \Spec_X \left(  \frac{E_0[\![x_1,\ldots,x_k]\!]}{\left([p^{m_1}](x_1),\ldots,[p^{m_k}](x_k) \right) } \right)
\]
Where we use, again, that the ring above is finite and flat because $[p^{m_i}](x_i)$ has an invertible coefficient for ${x_i}^{p^{m_i n}}$.
Further, we have a closed embedding $\GG[A] \hookrightarrow \GG[p^M]^k$ (where $k$ is the rank of $A$), which restricts to the generators. This map is non-canonical and depends on a choice of generators.

We also denote $\AAA^1 := \AAA^1_X= \Spec_X(E_0[x])$. There is closed embedding $\GG[p^M]\hookrightarrow \AAA^1$, given on rings by the quotient map $E_0[x] \to E_0[x]/(P_M(x))$. 
Note that $\AAA^1$ does not inherit a structure of group scheme, since the formal group law of $\GG$ need not be given by a polynomial.

\begin{claim}
    The closed embedding $\GG[p^M] \to \GG[p]$, has corresponding map on rings $E_0[x]/(P_1(x)) \to E_0[x]/(P_M(x))$, sending $x \mapsto x$.
\end{claim}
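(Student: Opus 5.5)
The plan is to identify the closed embedding $\GG[p] \hookrightarrow \GG[p^M]$ (the claim writes it as $\GG[p^M] \to \GG[p]$, but the ring map $E_0[x]/(P_1(x)) \leftarrow E_0[x]/(P_M(x))$ only makes sense as a quotient in the direction $\GG[p]\subseteq\GG[p^M]$) with the map $x\mapsto x$ on coordinate rings. The first thing to verify is that this map is well defined, i.e.\ that $P_1(x)$ divides $P_M(x)$ in $E_0[x]$, or equivalently that $P_M(x) \in (P_1(x))$.

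For the divisibility, I would work in the power series ring. Since $[p^M](x) = [p^{M-1}]([p](x))$ and $[p^{M-1}](0)=0$, we may write $[p^{M-1}](y) = y\cdot h(y)$ for some power series $h$. Hence $[p^M](x) = [p](x)\, h([p](x))$, so $[p^M](x) \in ([p](x)) = (P_1(x))$ in $E_0[\![x]\!]$. Writing $[p^M] = u_M P_M$ with $u_M$ a unit, this gives $P_M(x) \in (P_1(x))E_0[\![x]\!]$.

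To descend this to the polynomial ring, I would use the division form of Weierstrass preparation (Bourbaki Prop.~5): every power series $f$ can be written uniquely as $f = qP_1 + r$ with $r$ a polynomial of degree less than $\deg P_1$. For a polynomial $f$, ordinary Euclidean division by the monic polynomial $P_1$ produces such a decomposition with $q$ a polynomial, so by uniqueness it is the Weierstrass one. Applied to $f = P_M$, we know $r=0$ from the previous paragraph, so $P_1 \mid P_M$ in $E_0[x]$. Consequently $E_0[x]/(P_M) \to E_0[x]/(P_1)$, $x\mapsto x$, is a well-defined surjection, and it defines a closed embedding $\GG[p]\hookrightarrow\GG[p^M]$.

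It remains to check that this is the canonical inclusion of $p$-torsion, namely that on $Y$-points it sends a point $y$ with $P_1(y)=0$ to the same element $y$, now regarded as a point with $P_M(y)=0$. This is immediate from the description of points as zeros of the respective polynomials, and it is compatible with the group structure because both group laws are induced from the same formal group law $F$. I expect the main obstacle to be this descent step, i.e.\ making sure the unit factors $u_1,u_M$ cause no trouble in the power series to polynomial passage. The uniqueness in the Weierstrass division theorem is what handles this.
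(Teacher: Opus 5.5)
Your proof is correct and is essentially the paper's argument. The paper transports the evident map $x\mapsto x$ between the quotients of $E_0[\![x]\!]$ by $[p^M](x)$ and $[p](x)$ along the Weierstrass isomorphisms $E_0[\![x]\!]/([p^k](x))\cong E_0[x]/(P_k(x))$; your divisibility argument via uniqueness of Weierstrass division makes that transport explicit at the polynomial level. You are also right about the direction: the map must be the closed embedding $\GG[p]\hookrightarrow\GG[p^M]$, with ring map $E_0[x]/(P_M(x))\to E_0[x]/(P_1(x))$, whereas the paper's statement and the top arrow of its diagram point the other way, which is not well defined for $M>1$.
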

\begin{proof}
    This is obvious from the following commutative diagram:
    \[\begin{tikzcd}
	{E_0[\![x]\!] / ([p](x))} & {E_0[\![x]\!] / ([p^M](x))} \\
	{E_0[x]/(P_1(x))} & {E_0[x]/(P_M(x))}
	\arrow["{x \mapsto x}", from=1-1, to=1-2]
	\arrow["{x\mapsto x}"', from=1-1, to=2-1]
	\arrow["\simeq", from=1-1, to=2-1]
	\arrow["{x\mapsto x}", from=1-2, to=2-2]
	\arrow["{\simeq }"', from=1-2, to=2-2]
	\arrow[from=2-1, to=2-2]
\end{tikzcd}\]
\end{proof}
We will denote $P(x) := P_1(x)$, and $u(x) := u_1(x)$. That is, $[p](x) = u(x)P(x)$, where $u(x)$ is an invertible power series, and $P(x)$ is a distinguished polynomial of degree $p^n$. 

We will use the notation $O(x^r)$ to mean a polynomial, or power series, which divides $x^r$, i.e., of the form $a_rx^r + a_{r+1}x^{r+1} + \cdots$. 

\begin{claim}
    The polynomial $P_M(x)$ satisfies $P_M(x) = u\cdot p^M x +O(x^2)$, where $u \in (E_0)^{\times}$
\end{claim}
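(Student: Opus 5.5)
We will compare the linear coefficients of $[p^M](x)$ and $P_M(x)$, using the Weierstrass factorization $[p^M](x) = u_M(x)P_M(x)$. First we note that $P_M$ has zero constant term. Indeed, $[p^M](0)=0$ and $u_M(0)\in (E_0)^\times$, so evaluating the factorization at $x=0$ gives $P_M(0)=0$. Hence $P_M(x) = c\,x + O(x^2)$ for some $c\in E_0$, and the content of the claim is that $c = u\cdot p^M$ with $u$ a unit.

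Next we compute the linear term of $[p^M](x)$. From $f(x,y)=x+y+O(\text{degree }2)$ and the recursion $[m](x)=f(x,[m-1](x))$, induction on $m$ gives $[m](x) = m x + O(x^2)$. So $[p^M](x) = p^M x + O(x^2)$.

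Write $u_M(x) = v_0 + v_1x+\cdots$ with $v_0 = u_M(0)\in(E_0)^\times$. Comparing coefficients of $x$ in $u_M(x)P_M(x)$, and using $P_M(0)=0$, gives $p^M = v_0\, c$. Therefore $c = v_0^{-1}p^M$, so we may take $u = v_0^{-1}$, which lies in $(E_0)^\times$. This gives $P_M(x) = u\,p^Mx + O(x^2)$.

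The only step needing care is $P_M(0)=0$, since the Weierstrass factorization by itself does not force a zero constant term. We get it from invertibility of $u_M(0)$, which holds because $u_M$ is an invertible power series over the local ring $E_0$. The remaining steps are direct coefficient comparisons, so we expect no real obstacle.
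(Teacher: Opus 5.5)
Your proof is correct and follows essentially the same route as the paper. Both compare the constant and linear coefficients in $[p^M](x)=u_M(x)P_M(x)$, use invertibility of $u_M(0)$ to get $P_M(0)=0$, and then read off $p^M = u_M(0)\cdot c$. You also justify $[m](x)=mx+O(x^2)$ by induction, which the paper takes for granted.
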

\begin{proof}
    We know that $[p^M](x) = p^M x + O(x^2)$. $u(x)$ is invertible, thus $u_M(x) = u_0 + u_1x + O(x^2)$ with $u_0 \in (E_0)^{\times}$. Suppose $P_M(x) = a_0 + a_1x + O(x^2)$. Then:
    \[
    [p^M](x) = p^M x + O(x^2) = u_M(x)P_M(x) = u_0a_0 + (u_1a_0+u_0a_1)x + O(x^2) \in E_0[\![x]\!]
    \]
    Hence $u_0a_0 = 0$, which implies $a_0 = 0$. Therefore $p^M = u_0 a_1$, i.e. $a_1 = (u_0)^{-1}p^M$.  
\end{proof}

Let us introduce a useful piece of notation:

\begin{definition} \label{def:x-of-point}
    Let $Y$ be an $X$-scheme, and $p \in \Gamma(Y, \GG[p^M])$ be a $Y$-point of $\GG[p^M]$. Then we denote image of $p$ under the closed embedding $\GG[p^M] \to \AAA^1$ by $x(p) \in \AAA^1(Y)\simeq \scO_{Y}$. 
\end{definition}

We now introduce two subschemes of $\GG[A]$, each classifying maps $A \to \Gamma(Y, \GG[p^M])$ that behave in a way somewhat analogous to injectivity.

\begin{definition}
    Let $A = C_{p^{m_1}} \times \cdots \times C_{p^{m_k}}$ be an abelian $p$-group of order $p^{m_1}\cdots p^{m_k} = p^m$. Let $\GG[A] \subseteq \GG[p^M]^k$ be defined as before. Explicitly,
    \[
\GG[A] = \Spec_X \left(  \frac{E_0[\![x_1,\ldots,x_k]\!]}{\left([p^{m_1}](x_1),\ldots,[p^{m_k}](x_k) \right) } \right)
\]
    We define the following subschemes of $\GG[A]$:
    \begin{enumerate}
        \item[-] We let $\Mon(A,\GG)$ be the open subscheme defined by inverting expressions of the form 
        \[
        [i_1]\left( x_1 \right) +_F \cdots +_F [i_k]\left(x_k \right) 
        \]
         Where $1 \le i_1 \le p^{m_1}-1$, $1 \le i_2 \le p^{m_2}-1$, $\ldots$, $1 \le i_k \le p^{m_k}-1$. Recall that $\GG[p^M]$ is a finite group scheme, and by $[i_j]$ we mean the $i_j$-th series in $\GG[p^M]$. 
         
    \item[-] Let $\Level(A,\GG)$ be defined such that its functor of points maps a scheme $Y$ to $A$-level structures on $\GG$ over $Y$. A level $A$-structure over $Y$ is a map $\phi \colon A \to \Gamma(Y, \GG[p^M])$, such that:
    \begin{equation} \label{eq:divisor-inequality}
        [\phi A[p]] :=\sum_{a \in A[p]} [x(\phi(a))] \le \GG[p] \times_X Y
    \end{equation}
    Where $A[p]$ is the $p$-torsion in $A$; $[x(\phi(a))]$ is the divisor defined by the image of $\phi(a)$ in $\AAA^1$, compatible with the notaion of \cref{def:x-of-point}; and $\GG[p]$ is the $p$-torsion of $\GG$, considered as a divisor of $\AAA^1$. 
    That is, we require this divisor inequality to hold in $\AAA^1$.  

    Note that $\GG[p] \subseteq \AAA^1$ is the divisor defined exactly by the polynomial $P_1(x)=P(x)$, as $\GG[p] = \Spec_X(E_0[x]/(P(x)))$.
    \end{enumerate}
\end{definition}

\begin{rmrk}
    In \cite{Strickland97}, $\Level(A, \GG)$ is defined slightly differently. There, a map $\phi : A \to \Gamma(Y, \GG)$ is a level $A$-structure if it satisfies the same divisor inequality (\ref{eq:divisor-inequality}), but interpreted as an inequality of divisors on $\GG$. We will refer to this definition by the notaion $\Level(A,\GG)_{\rest}$.
    
    As $\AAA^1$ may be seen as the extension of the functor of points of $\GG$ to non-nilpotent elements (forgetting the group structure), our definition can be seen as a natural extension of Strickland’s to this non-nilpotent setting.
    
    Essentially, the condition of being a level structure has two components: first, the requirement to be a group homomorphism, which is detected on $\GG[p^M]$; and second, the divisor inequality, which is verified in $\AAA^1$.
\end{rmrk}

We again stress that these schemes are subschemes of $\GG[A]$, but not subschemes of $\GG$. 
These subschemes enjoy a number of useful properties.

\begin{lem}[{\cite[Proposition 7.2.]{Strickland97}}] For an abelian $p$-group $A$, and a formal group $\GG$, $\Level(A,\GG) \subseteq \GG[A]$ is a closed subscheme. 

\end{lem}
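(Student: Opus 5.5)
The plan is to show that the level-structure condition is cut out by finitely many polynomial equations in the coordinate ring $\scO_{\GG[A]} = E_0[\![x_1,\ldots,x_k]\!]/([p^{m_1}](x_1),\ldots,[p^{m_k}](x_k))$. Since $\GG[A]$ is finite and free over $X$, this ring is a finite free $E_0$-module, and everything can be done at the level of $\scO_{\GG[A]}$-algebras. Let $\phi^{\mathrm{univ}} \colon A \to \Gamma(\GG[A], \GG[p^M])$ be the universal homomorphism. For $a \in A[p]$ put $y_a := x(\phi^{\mathrm{univ}}(a)) \in \scO_{\GG[A]}$, which is the image of $[i_1](x_1) +_F \cdots +_F [i_k](x_k)$ under the identification of \cref{def:x-of-point}. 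Form the monic polynomial
\[
D(t) := \prod_{a \in A[p]} (t - y_a) \in \scO_{\GG[A]}[t],
\]
of degree $|A[p]|$. It defines the divisor $[\phi A[p]]$ in $\AAA^1_{\GG[A]}$, and the condition (\ref{eq:divisor-inequality}) for a $Y$-point says that $D$, base changed to $Y$, divides $P(t)$ in $\scO_Y[t]$.

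The key step is to translate this divisibility into closed conditions. Because $D$ is monic, polynomial division by $D$ is defined over any base. Write $P(t) = Q(t)D(t) + \rho(t)$ in $\scO_{\GG[A]}[t]$, with $\deg \rho < |A[p]|$ and
\[
\rho(t) = \sum_{j < |A[p]|} r_j t^j .
\]
Division by a monic polynomial is unique and commutes with base change along any map $\scO_{\GG[A]} \to \scO_Y$. So the base change of $D$ divides the base change of $P$ if and only if every $r_j$ maps to zero in $\scO_Y$. Hence the functor $\Level(A,\GG)$ is represented by $\Spec_X(\scO_{\GG[A]}/(r_0,\ldots,r_{|A[p]|-1}))$, which is a closed subscheme of $\GG[A]$.

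The one point to check is that ``divisor inequality in $\AAA^1$'' really means divisibility of the defining monic polynomials. A divisor $[D] \le [P]$ between relative effective Cartier divisors of $\AAA^1_Y$ that are finite and flat over $Y$ means the closed subscheme $V(D)$ is contained in $V(P)$, i.e.\ $P \in (D)$ in $\scO_Y[t]$, which is exactly $D \mid P$. Both $D$ and $P$ are monic, so $\scO_Y[t]/(D)$ is free and these divisors are indeed finite flat over $Y$. I expect this identification to be the only potential obstacle, and it is handled by the monicity (in particular $P = P_1$ is distinguished, hence monic). Note that no nilpotence of the $y_a$ is required, which is why working in $\AAA^1$ rather than in $\GG$ is harmless and in fact convenient here.
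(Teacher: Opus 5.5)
Your proposal is correct and takes essentially the paper's approach: the paper only cites Strickland's Proposition 7.2, whose proof rests on the fact that for divisors cut out by monic polynomials the locus where $D \le D'$ holds is closed, defined by the coefficients of $f_{D'}$ reduced modulo $f_D$ (your remainder coefficients $r_j$). The paper later uses the lemma in exactly this form, writing $\Level(A,\GG)$ as the pullback along $\phi \mapsto [\phi A[p]]$ of the closed locus of divisors of $\AAA^1$ lying below $\GG[p]$, and your division-with-remainder argument makes that pullback explicit without any nilpotence assumption on the $y_a$.
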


\begin{lem} \label{lem:same-global-funs}
    Let $\Level(A, \GG)_{\rest}, \GG[A]_{\rest}$ denote the schemes $\Level(A, \GG), \GG[A]$, with the functor of points restricted to nilpotents. Then $\scO_{\GG[A]_{\rest}} = \scO_{\GG[A]}$ and $\scO_{\Level(A, \GG)_{\rest}} = \scO_{\Level(A,\GG)}$. 
\end{lem}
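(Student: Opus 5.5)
The plan is to reduce both equalities to one algebraic fact: the rings of functions do not see the difference between the two functors of points, because they are quotients of a power series ring by an ideal generated by distinguished polynomials.

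First treat $\GG[A]$. By definition,
\[
\scO_{\GG[A]} = \frac{E_0[\![x_1,\ldots,x_k]\!]}{([p^{m_1}](x_1),\ldots,[p^{m_k}](x_k))}.
\]
The restricted version $\GG[A]_{\rest}$ is the closed formal subscheme of $\GG^k$ cut out by the same series. Its ring of functions is therefore the same quotient of $\scO_{\GG^k}=E_0[\![x_1,\ldots,x_k]\!]$, so the only issue is to identify the two presentations. I will apply Weierstrass preparation one variable at a time, writing $[p^{m_i}](x_i)=u_{m_i}(x_i)P_{m_i}(x_i)$ with $u_{m_i}$ a unit. This gives
\[
\frac{E_0[\![x_1,\ldots,x_k]\!]}{(P_{m_1}(x_1),\ldots,P_{m_k}(x_k))}\cong \frac{E_0[x_1,\ldots,x_k]}{(P_{m_1}(x_1),\ldots,P_{m_k}(x_k))}.
\]
To justify this isomorphism, note that the right-hand side is finite free over $E_0$, with basis the monomials $x_1^{j_1}\cdots x_k^{j_k}$ for $j_i<p^{m_i n}$. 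Since each $P_{m_i}$ is distinguished, $x_i$ is topologically nilpotent in it; indeed $x_i^{p^{m_in}}\in \frakm\cdot(\text{lower terms})$. Hence the right-hand side is already $(\frakm,x_1,\ldots,x_k)$-adically complete, and the map from the polynomial quotient to the power series quotient is an isomorphism. The one-variable version of this argument is the one already cited from Bourbaki, and it iterates because each step produces a finite free algebra over a complete local ring.

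Next, for the level schemes, I will compare the defining ideals inside this common ring. The group-homomorphism condition is the same in both settings, since it is cut out in $\GG[p^M]^k$ by the same equations. The divisor condition (\ref{eq:divisor-inequality}) says that $P(x)$ is divisible by $\prod_{a\in A[p]}(x-x(\phi(a)))$. In our version this divisibility is taken in $\scO_Y[x]$; in Strickland's it is taken in $\scO_Y[\![x]\!]$ for the divisors on $\GG$. Let $D=\prod(x-x(\phi(a)))$, a monic polynomial whose roots are topologically nilpotent in the universal case.

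The key step is to show that the universal ideal expressing "$D$ divides $P$" is the same in the two settings. Since $D$ is monic, polynomial division gives $P=QD+\mathrm{Rem}$ with $\deg\mathrm{Rem}<|A[p]|$, and the coefficients of $\mathrm{Rem}$ generate the ideal in the $\AAA^1$ setting. In the power series setting we instead divide $[p](x)$ by $D$. Because $D$ has nilpotent roots over the ring in question, the Weierstrass division theorem still applies, and it gives a remainder of the same degree bound. Since $[p](x)=u(x)P(x)$ with $u$ a unit, divisibility of $[p]$ by $D$ is equivalent to divisibility of $P$ by $D$. By uniqueness of Weierstrass division, the remainder coefficients then generate the same ideal in both cases. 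Hence $\scO_{\Level(A,\GG)_{\rest}}=\scO_{\Level(A,\GG)}$.

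I expect the main obstacle to be exactly this comparison of the divisor conditions. One has to make sure that divisibility in $\scO_Y[\![x]\!]$ and divisibility in $\scO_Y[x]$ agree for monic $D$ with topologically nilpotent roots, which requires adic completeness of the coefficient ring. I would first check this on the universal ring $\scO_{\GG[A]}$, which is complete local, and then deduce the general statement by base change.
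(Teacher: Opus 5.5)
Your argument is correct, but it takes a different route from the paper. The paper's proof is formal. It writes $\Level(A,\GG)=\GG[A]\times_{\Div(\AAA^1)}E$, where $E\subseteq\Div(\AAA^1)$ is the closed locus of divisors bounded by $\GG[p]$. It then notes that $\Level(A,\GG)_{\rest}$ is the same fiber product with $\GG[A]_{\rest}$ in place of $\GG[A]$, and takes rings of functions. For this it uses that $\scO_{\GG[A]_{\rest}}=\scO_{\GG[A]}$ holds by definition, so your Weierstrass-preparation step for $\GG[A]$ is harmless but not needed. You instead compute the defining ideal explicitly, as the coefficients of the remainder of $P$ modulo the monic $D=\prod_{a\in A[p]}(x-x(\phi(a)))$. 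You then use uniqueness of Weierstrass division to show that divisibility in $\scO_Y[x]$ and in $\scO_Y[\![x]\!]$ cut out the same ideal; this is available because the non-leading coefficients of $D$ are topologically nilpotent in $\scO_{\GG[A]}$ and nilpotent over test rings. Your route gives something the paper's does not. When the paper calls the restricted case ``identical,'' it keeps the divisor inequality in $\AAA^1$. Yet in \cref{prop:topology-of-level} it applies \cite{HS20} to Strickland's $\Level(A,\GG)_{\rest}$, where the inequality is imposed on $\GG$. Your comparison of the two divisibility conditions is exactly the justification that these two notions agree. The paper's argument is shorter and more conceptual; yours is more hands-on but closes that gap.
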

\begin{proof}
    The equality $\scO_{\GG[A]_{\rest}} = \scO_{\GG[A]}$ is by definition. 

    Let us explain why $\scO_{\Level(A, \GG)_{\rest}} = \scO_{\Level(A,\GG)}$ holds. 
    Let $\Div(\AAA^1)$ be the scheme classifying of divisors of $\AAA^1$. Then there is a closed subscheme $E \hookrightarrow \Div(\AAA^1)$ which classifies divisors $D \in \Div(\AAA^1)(Y)$ such that $D \le \GG[p] \times_X Y$ (divisor inequality is a closed condition). 

    There is a morphism $\GG[A] \to \Div(\AAA^1)$, such that a homomorphism $\phi \colon A \to \Gamma(Y, \GG[p^M])$ is mapped to $[\phi A[p]]$. Then, by definition:
    \[
    \Level(A, \GG) = \GG[A] \times_{\Div(\AAA^1)} E
    \]
    Identically for the restricted functors, we have:
     \[
    \Level(A, \GG)_{\rest} = \GG[A]_{\rest} \times_{\Div(\AAA^1)} E
    \]
    Thus:
    \[
    \scO_{\Level(A,\GG)} = \scO_{\GG[A]} \otimes_{\scO_{\Div(\AAA^1)}} \scO_{E} = \scO_{\GG[A]_{\rest}} \otimes_{\scO_{\Div(\AAA^1)}} \scO_{E} = \scO_{{\Level(A,\GG)}_{\rest}}
    \]
    
\end{proof}

\begin{lem}
    $\Mon(A,\GG)$ classifies maps $\phi \colon A \to \Gamma(Y, \GG[p^M])$ which are injective everywhere. That is, $x(\phi(a)) \in \scO_Y$ is invertible for all $a \ne 0$. See \cref{def:x-of-point} for the notation $x(\phi(a))$.
\end{lem}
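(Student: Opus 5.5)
We prove both inclusions directly from the definition of $\Mon(A,\GG)$ as the open subscheme of $\GG[A]$ where the expressions
\[
e(i_1,\dots,i_k) := [i_1](x_1) +_F \cdots +_F [i_k](x_k)
\]
are inverted. The key observation is that these expressions are exactly the coordinates $x(\phi(a))$ of the images of the nonzero elements of $A$. Let $a_1,\dots,a_k$ be the chosen generators of $A = C_{p^{m_1}} \times \cdots \times C_{p^{m_k}}$. A $Y$-point $\phi$ of $\GG[A]$ is determined by the points $\phi(a_j) \in \Gamma(Y, \GG[p^M])$, and by construction $x(\phi(a_j)) = x_j$ under the embedding $\GG[A] \hookrightarrow \GG[p^M]^k$. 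Every $a \in A$ can be written uniquely as $a = \sum_j i_j a_j$ with $0 \le i_j \le p^{m_j}-1$. Since $\phi$ is a homomorphism into the finite group scheme $\GG[p^M]$, whose group law is given in the coordinate $x$ by (the image of) the formal group law, we get
\[
x(\phi(a)) = [i_1](x_1) +_F \cdots +_F [i_k](x_k).
\]
This formula has to be read in $\scO_{\GG[p^M]} = E_0[x]/(P_M(x))$, where the power series become polynomials. One small point needs care here. The series $[0](x_j)$ vanishes, so whenever some $i_j = 0$ the corresponding term simply drops out, and $x(\phi(a))$ is the formal sum over the indices with $i_j \neq 0$.

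The next step is to reconcile the index ranges. The statement defining $\Mon(A,\GG)$ lists only the tuples with all $i_j \ge 1$. The nonzero elements of $A$, however, correspond to all tuples $(i_1,\dots,i_k) \neq (0,\dots,0)$, and the proof of \cref{prop:pi-0-of-proper-Tate} likewise uses the product over all nonzero tuples. I will read the definition of $\Mon(A,\GG)$ as inverting $e(i_1,\dots,i_k)$ for every nonzero tuple, with zero entries omitted. This is the intended meaning, since it is what matches the Euler class of the reduced regular representation. With that reading, the multiplicative set inverted in $\Mon(A,\GG)$ is generated precisely by $\{x(\phi(a)) : a \neq 0\}$ for the universal $\phi$ over $\GG[A]$.

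The proof of the lemma now follows from the universal property of localization. A $Y$-point of $\Mon(A,\GG)$ is a ring map $\scO_{\GG[A]}[S^{-1}] \to \scO_Y$. Equivalently, it is a map $\scO_{\GG[A]} \to \scO_Y$, that is, a homomorphism $\phi\colon A \to \Gamma(Y,\GG[p^M])$, under which every element of $S$ becomes a unit. By the formula above, this says exactly that $x(\phi(a)) \in \scO_Y^\times$ for all $a \neq 0$. This is also why the condition is called injectivity everywhere: at every point $y$ of $Y$, the coordinate $x(\phi(a))$ is nonzero, and since $x = 0$ cuts out the identity section, $\phi(a)$ is not the identity at $y$.

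The main obstacle I expect is justifying that the formula for $x(\phi(a))$ holds in $\GG[p^M]$ rather than only formally. This requires that the group structure on $\GG[p^M] = \Spec_X(E_0[x]/(P_M(x)))$ is the reduction of the formal group law modulo $P_M$, and that iterated sums and $[i]$-series computed in $E_0[\![x_1,\dots,x_k]\!]/([p^{m_j}](x_j))$ agree with their images in the finite ring. Both facts follow from the Weierstrass identification $E_0[\![x]\!]/([p^M](x)) = E_0[x]/(P_M(x))$ already recorded in this section, applied factorwise to $\GG[p^M]^k$.
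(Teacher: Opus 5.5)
Your proposal is correct and is essentially the paper's argument: the paper observes that $\Mon(A,\GG)$ is by construction the localization of $\scO_{\GG[A]}$ at the formal sums $[i_1](x_1) +_F \cdots +_F [i_k](x_k)$, which are the coordinates $x(\phi(a))$ of the universal homomorphism, so its $Y$-points are exactly the homomorphisms that make these units. You additionally write out the identification $x(\phi(a)) = [i_1](x_1)+_F\cdots+_F[i_k](x_k)$ and the universal property of localization, and your reading of the index range is the right one. The tuples must range over all $(i_1,\dots,i_k)\neq(0,\dots,0)$, as in the Euler-class computation of \cref{prop:pi-0-of-proper-Tate}. If instead every $i_j \ge 1$, the lemma fails for $k \ge 2$, for example at a rational point where $\phi(a_1)=0$ and $\phi(a_2)$ is a nonzero $p$-torsion point with invertible coordinate.
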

\begin{proof}
    This follows from the definition of $\Mon(A,\GG)$; by construction, it is formed by inverting all expressions of the form $[i_1](x_1) +_F [i_2](x_2) +_F \cdots +_F [i_k](x_k)$.
\end{proof}

The discussion above may be summarized by the following diagram:

\[\begin{tikzcd}
	{\Mon(A,\GG)} \\
	{\Level(A,\GG)} & {\GG[A]} & {\GG[p^M]^k} & {\AAA^k}
	\arrow[from=1-1, to=2-2]
	\arrow[from=2-1, to=2-2]
	\arrow[from=2-2, to=2-3]
	\arrow[from=2-3, to=2-4]
\end{tikzcd}\]
We now relate the diagram to topology:
\begin{prop} \label{prop:topology-of-level} 
    Let $A$ be an abelian $p$-group, and $\GG$ the formal group associated to Morava $E$-theory.
    Then
    $\scO_{\Level(A, \GG)} = \left(E^0(BA^*)/(I_{tr})\right)^{\free}$, with the quotient map $E^0(BA^*) \to \left(E^0(BA^*)/(I_{tr})\right)^{\free}$ corresponding to the closed embedding  $\Level(A, \GG) \to \GG[A]$. 
\end{prop}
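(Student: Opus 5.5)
Our plan is to reduce the statement to Strickland's theorem on level structures, using \cref{lem:same-global-funs} and \cref{thm:E-cohomology-of-abelian-groups}.

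First, by \cref{thm:E-cohomology-of-abelian-groups} (with $R'=\pcs$, so $R=E$), the map $\widetilde{\chi}$ identifies $\Spf(E^0(BA^*))$ with $\Hom(A,\GG)=\GG[A]_{\rest}$. Hence $E^0(BA^*)=\scO_{\GG[A]_{\rest}}=\scO_{\GG[A]}$, and explicitly this is the ring in (\ref{eq:group-cohomology}). By \cref{lem:same-global-funs}, the closed embedding $\Level(A,\GG)\hookrightarrow\GG[A]$ has the same ring-level description as Strickland's embedding $\Level(A,\GG)_{\rest}\hookrightarrow\GG[A]_{\rest}$. It therefore suffices to prove that the kernel of $E^0(BA^*)\to\scO_{\Level(A,\GG)_{\rest}}$ equals the kernel of $E^0(BA^*)\to(E^0(BA^*)/(I_{tr}))^{\free}$. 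In other words, the ideal of the closed subscheme $\Level(A,\GG)_{\rest}$ should be the $p$-saturation of the transfer ideal.

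To establish this, we will use the two facts from Strickland's work (\cite{Strickland97}, and the transfer analysis underlying \cite[Theorem 8.6]{Strickland98}). The first is that $\scO_{\Level(A,\GG)_{\rest}}$ is finite and free over $E_0$, and in particular $p$-torsion free. The second is that after inverting $p$, $\Level(A,\GG)$ and $\Mon(A,\GG)$ agree: over $E_0[1/p]$ the divisor $\GG[p]$ is étale, so the inequality (\ref{eq:divisor-inequality}) is equivalent to the $x(\phi(a))$ being pairwise distinct, that is, to injectivity. The proof then has three steps.

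\emph{Transfers vanish on level structures.} For a proper subgroup $H\subsetneq A^*$, the transfer $E^0(BH)\to E^0(BA^*)$ should vanish after restriction to $\Level(A,\GG)_{\rest}$. By naturality of transfers along quotients, we may reduce to $H$ of index $p$. There the transfer of $1$ is the $[p]$-series-type quotient $\prod_{j\neq0}$ of the character corresponding to $A^*/H$, and the level condition forces this product to vanish. This gives $I_{tr}\subseteq$ the ideal of $\Level(A,\GG)$, and since the target is torsion free, we get a surjection $(E^0(BA^*)/I_{tr})^{\free}\twoheadrightarrow\scO_{\Level(A,\GG)}$.

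\emph{Rational comparison.} After inverting $p$, the character theory of \cite{HKR00} decomposes $p^{-1}E^0(BA^*)$ as a product over homomorphisms $A\to\GG$ (over a suitable extension). Transfers from proper subgroups span exactly the factors of non-injective homomorphisms, so $p^{-1}(E^0(BA^*)/I_{tr})=\scO_{\Mon}[1/p]=\scO_{\Level}[1/p]$.

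\emph{Conclusion.} Both rings are $p$-torsion free, so both embed in their rationalizations. A surjection between them that is an isomorphism after inverting $p$ is then injective, hence an isomorphism. It is also compatible with the quotient map from $E^0(BA^*)$, which is the closed-embedding statement.

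The main obstacle is the first step: computing the restricted transfer explicitly in terms of the coordinates $x_i$ and checking that it lies in the level ideal. If a direct computation is messy, we would instead quote Strickland's identification of $\Level(A,\GG)$ as cut out by the ``annihilator of transfers'' (compare \cite{HS20}) and only verify compatibility with our non-nilpotent extension via \cref{lem:same-global-funs}.
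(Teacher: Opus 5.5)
Your argument is correct, and your main route differs from the paper's even though it begins with the same reduction. The paper's proof is only that reduction plus a citation. \cite[Proposition 5.2]{HS20} already identifies the quotient $E^0(BA^*)\to(E^0(BA^*)/(I_{tr}))^{\free}$ with the closed embedding $\Level(A,\GG)_{\rest}\hookrightarrow\GG[A]_{\rest}$, and \cref{lem:same-global-funs} transports this to the unrestricted schemes. That is exactly your fallback.

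You instead reprove the cited statement in three steps:
\begin{enumerate}
\item Transfers vanish on level structures, which gives a surjection $(E^0(BA^*)/(I_{tr}))^{\free}\twoheadrightarrow\scO_{\Level(A,\GG)}$.
\item The two sides agree after inverting $p$, by HKR character theory together with \cref{prop:iso-after-rationalization}. That proposition does not depend on the present one, so there is no circularity.
\item Strickland's torsion-freeness of $\scO_{\Level}$ upgrades the surjection to an isomorphism.
\end{enumerate}
This explains why the level condition is the transfer condition, which the paper's citation hides. The cost is importing Strickland's flatness theorem and the full HKR machinery. In particular, you need descent along the faithfully flat HKR ring $L$ over $p^{-1}E_0$, and you should state this explicitly.

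Two details in your first step need to be made explicit.

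First, checking $\mathrm{tr}(1)$ suffices only because the image of $\mathrm{tr}_H^{A^*}$ is the principal ideal generated by $\mathrm{tr}_H^{A^*}(1)$. This holds because restriction $E^0(BA^*)\to E^0(BH)$ is surjective, since every character of $H$ extends to $A^*$. Frobenius reciprocity then gives $\mathrm{tr}(\mathrm{res}(y))=y\,\mathrm{tr}(1)$.

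Second, the vanishing you flagged as the main obstacle is short. Take $H=\ker\chi$ of index $p$, where $\chi$ corresponds to $0\neq a\in A[p]$. Write $x_b:=x(\phi(b))$ and $\langle p\rangle(t)=[p](t)/t$. Then $\mathrm{tr}_H^{A^*}(1)=\langle p\rangle(x_a)$, by pulling back $\mathrm{tr}_e^{C_p}(1)$ along $\chi$. The divisor inequality gives $P(t)=t\prod_{0\neq b\in A[p]}(t-x_b)\,Q(t)$. Hence $\langle p\rangle(t)=u(t)\prod_{b\neq 0}(t-x_b)\,Q(t)$, and this visibly vanishes at the topologically nilpotent element $t=x_a$.
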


\begin{proof}
    We again adopt the notation $\Level(A,\GG)_{\rest}$ and $\GG[A]_{\rest}$ for the functor of points restricted to nilpotents. 

    In {{\cite[Proposition 5.2]{HS20}}}, it was shown that the quotient map $q:E^0(BA^*) \to \left(E^0(BA^*)/(I_{tr})\right)^{\free}$ corresponds to the closed embedding of formal schemes $\Level(A,\GG)_{\rest} \to \GG[A]_{\rest}$.

    Note the following commutative diagram:
    \[\begin{tikzcd}
	{\Level(A,\GG)_{\rest}} & {\GG[A]_{\rest}} \\
	{\Level(A,\GG)} & {\GG[A]}
	\arrow[hook, from=1-1, to=1-2]
	\arrow[from=1-1, to=2-1]
	\arrow[from=1-2, to=2-2]
	\arrow[hook, from=2-1, to=2-2]
\end{tikzcd}\]
    Applying the functor $\scO_{(-)}$, and using \cref{lem:same-global-funs}, we get:

    \[\begin{tikzcd}
	{\scO_{\Level(A,\GG)_{\rest}}} & {\scO_{\GG[A]_{\rest}}} \\
	{\scO_{\Level(A,\GG)}} & {\scO_{\GG[A]}}
	\arrow[equals, from=1-1, to=2-1]
	\arrow["q"', from=1-2, to=1-1]
	\arrow[equals, from=2-2, to=1-2]
	\arrow[from=2-2, to=2-1]
\end{tikzcd}\]

    Thus the inclusion $\Level(A,\GG) \to \GG[A]$ also corresponds to the quotient map $q:E^0(BA^*) \to (E^0(BA^*)/(I_{tr}))^{\free}$. In particular, $\scO_{\Level(A, \GG)} = \left(E^0(BA^*)/(I_{tr})\right)^{\free}$.
    
\end{proof}

We have a similar result for $\Mon(A,\GG)$.

\begin{prop} \label{prop:topology-of-mon}
    Let $A$ be an abelian $p$-group, and $\GG$ be the formal group associated to Morava $E$-theory.
    Then $\scO_{\Mon(A,\GG)} = \pi_0(E^{\tau A^*})$, and the open embedding $\Mon(A,\GG) \to \GG[A]$ corresponds to the map of rings $E^0(BA^*) \to \pi_0(E^{\tau A^*})$. 
\end{prop}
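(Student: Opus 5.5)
The plan is to combine \cref{thm:E-cohomology-of-abelian-groups}, which identifies $\Spf(E^0(BA^*))$ with $\Hom(A,\GG)$, with the computation of $\pi_0(E^{\tau A^*})$ in \cref{prop:pi-0-of-proper-Tate}. Specialising \cref{prop:pi-0-of-proper-Tate} to $R=E$ (so $R'=\pcs$), we have
\[
\pi_0(E^{\tau A^*}) = S^{-1}E^0(BA^*),
\]
where $S$ is generated by the elements $[i_1](x_1)+_F\cdots+_F[i_k](x_k)$. By \cref{thm:tate-of-complex-oriented}, the canonical map $E^{hA^*}\to E^{\tau A^*}$ exhibits the target as this localization. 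So on $\pi_0$ the canonical map is precisely the localization map $E^0(BA^*)\to S^{-1}E^0(BA^*)$.

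Next I will identify $E^0(BA^*)$ with $\scO_{\GG[A]}$, using (\ref{eq:group-cohomology}) and the definition of $\GG[A]$. Both rings are
\[
E_0[\![x_1,\dots,x_k]\!]/\bigl([p^{m_1}](x_1),\dots,[p^{m_k}](x_k)\bigr),
\]
and this quotient is finite and free over $E_0$ by Weierstrass preparation. Hence it equals the polynomial-type quotient, and $\GG[A]$ is an honest affine scheme over $X$ with this coordinate ring.

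The expressions $[i_1](x_1)+_F\cdots+_F[i_k](x_k)$ inverted in $S$ are exactly those inverted in the definition of $\Mon(A,\GG)$. One point needs care: in $\scO_{\GG[A]}$ these are polynomial expressions, because the formal group law becomes a polynomial on the finite group scheme $\GG[p^M]$. They agree with the power series expressions in $E^0(BA^*)$ under the identification, since both are images of the same power series in $E_0[\![x_1,\dots,x_k]\!]$. Therefore $\scO_{\Mon(A,\GG)}=S^{-1}\scO_{\GG[A]}=\pi_0(E^{\tau A^*})$, and the open embedding corresponds to the localization map.

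The main obstacle is bookkeeping rather than anything conceptual. The choice of generators $a_i$ of $A$ must be made compatibly on both sides: the Chern classes $x_i$ of $V_{a_i}$, and the coordinates on $\GG[A]\subseteq\GG[p^M]^k$. With that choice fixed, the Euler class computation in \cref{prop:pi-0-of-proper-Tate} matches the $\Mon$ condition term by term, since the irreducible characters of $A^*$ correspond to the nonzero elements of $A$. I would also remark that the result does not depend on the chosen generators. Indeed, both $\Mon(A,\GG)$ and the proper Tate construction are intrinsic: $\Mon$ asks that $x(\phi(a))$ be invertible for every $a\neq 0$, which is the description in the preceding lemma.
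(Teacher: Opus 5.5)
Your proposal is correct and follows essentially the same route as the paper: identify $E^0(BA^*)$ with $\scO_{\GG[A]}$ via (\ref{eq:group-cohomology}), and note that the elements inverted to define $\Mon(A,\GG)$ are exactly the generators of the multiplicative set $S$. Then \cref{prop:pi-0-of-proper-Tate} (resting on \cref{thm:tate-of-complex-oriented}) gives $\pi_0(E^{\tau A^*})=S^{-1}E^0(BA^*)$, with the canonical map being the localization. Your extra bookkeeping only makes explicit what the paper's two-line proof leaves implicit: polynomial versus power-series representatives, a compatible choice of generators, and indexing the inverted elements by all nonzero $a\in A$, which is what the Euler class computation actually produces.
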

\begin{proof}
    By definition, $\scO_{\Mon(A,\GG)}$ is obtained from $\scO_{\GG[A]}$ by inverting all expressions of the form \[
    [i_1]\left( x_1 \right) +_F \cdots +_F [i_k]\left(x_k \right)
    \]
    In \cref{prop:pi-0-of-proper-Tate} we showed that this is the localization $E^0(BA^*) \to \pi_0(E^{\tau A^*})$. 
\end{proof}

\begin{prop}
    Let $A$ be an abelian $p$-group, and $\GG$ be the formal group associated to Morava $E$-theory. Then the open embedding $\Mon(A,\GG) \to  \GG[A]$ factors as $\Mon(A,\GG) \to  \Level(A,\GG) \to \GG[A]$. Further, the map $\Mon(A,\GG) \to  \Level(A,\GG)$ is induced by the homomorphism $\overline{G_E} \colon \left(E^0(BA^*)/(I_{tr})\right)^{\free} \to \pi_0(E^{\tau A^*})$ on rings.
\end{prop}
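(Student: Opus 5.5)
The plan is to work entirely on the level of rings, using \cref{prop:topology-of-level} and \cref{prop:topology-of-mon} to translate the geometric statement into an algebraic one. Under these identifications, $\scO_{\GG[A]} = E^0(BA^*)$, $\scO_{\Level(A,\GG)} = (E^0(BA^*)/(I_{tr}))^{\free}$ with the closed embedding given by the quotient map $q$, and $\scO_{\Mon(A,\GG)} = \pi_0(E^{\tau A^*})$ with the open embedding given by the canonical localization map $c \colon E^0(BA^*) \to \pi_0(E^{\tau A^*})$. So the statement amounts to the identity of ring maps
\[
c = \overline{G_E} \circ q .
\]
This identity exhibits the open embedding as factoring through $\Level(A,\GG)$, via the morphism $\Spec_X(\overline{G_E})$.

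The key steps, in order, are as follows. First, recall from \cref{subsection:comp-proper-tate} that the canonical map $\pi_0(E^{hA^*}) \to \pi_0(E^{\tau A^*})$ factors as $G_E$ composed with the quotient by $I_{tr}$. The reason is that the cofiber sequence defining the proper Tate construction kills all transfers from proper subgroups.

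Second, note that $\pi_0(E^{\tau A^*})$ is $p$-torsion free. Indeed, $E^0(BA^*)$ is torsion free by the HKR result cited above, and $\pi_0(E^{\tau A^*})$ is a localization of it by \cref{prop:pi-0-of-proper-Tate}. Hence $G_E$ factors through $(-)^{\free}$, giving $\overline{G_E}$ with $G_E = \overline{G_E}\circ(\text{quotient to free part})$. Composing gives $c = \overline{G_E}\circ q$. Since all maps involved are ring maps ($G_E$ is multiplicative and unital, and $(-)^{\free}$ is a functor on rings), $\overline{G_E}$ is a ring homomorphism.

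Third, pass to $X$-schemes via $\Spec_X$. The schemes here are relative spectra of $E_0$-algebras, so $\Spec_X$ is contravariantly functorial and converts the identity of ring maps into the factorization $\Mon(A,\GG) \to \Level(A,\GG) \to \GG[A]$ of the open embedding, with the first map induced by $\overline{G_E}$.

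The only real subtlety is making sure the identifications in \cref{prop:topology-of-level} and \cref{prop:topology-of-mon} are compatible, i.e.\ that both are taken relative to the same presentation of $E^0(BA^*)$ as $\scO_{\GG[A]}$ (the same choice of generators $x_i$ and the map $\widetilde{\chi}$ of \cref{thm:E-cohomology-of-abelian-groups}). I will check this by noting that both propositions describe their rings as quotients or localizations of the same ring $E^0(BA^*)$, via $q$ and $c$ respectively. No further geometric input is needed.

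As a sanity check, I will also observe that the factorization is consistent with geometry: an injective homomorphism satisfies the divisor inequality. However, the ring-level argument above already suffices.
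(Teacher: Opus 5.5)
Your proposal is correct and is essentially the paper's proof: both use \cref{prop:topology-of-level} and \cref{prop:topology-of-mon} to reduce the claim to the ring-level identity $c = \overline{G_E}\circ q$, which is the commutative triangle defining $\overline{G_E}$ (available because $\pi_0(E^{\tau A^*})$ is $p$-torsion free), and then apply $\Spec_X$. Your extra remarks on torsion-freeness, multiplicativity, and the compatibility of the two identifications only make explicit what the paper takes from the discussion preceding \cref{cor:decomposition-of-frob}.
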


\begin{proof}
    By \cref{prop:topology-of-mon}, we know that map $E^0(BA^*) \to \pi_0(E^{\tau A^*})$ corresponds to the open embedding $\Mon(A,\GG) \to \GG[p^M]$. 
    By \cref{prop:topology-of-level}. map $E^0(BA^*) \to \left( E^0(BA^*)/(I_{tr}) \right)^{\free}$ corresponds to the closed embedding $\Level(A,\GG) \to \GG[A]$. 

    Recall that by definition of $\overline{G_E}$, we have the following commutative diagram:
    \[\begin{tikzcd}[column sep=tiny]
	{\pi_0(E^{BA^*})} && {\pi_0(E^{\tau A^*})} \\
	& {\left(E^0(BA^*)/(I_{tr})\right)^{\free}}
	\arrow[from=1-1, to=1-3]
	\arrow[from=1-1, to=2-2]
	\arrow["{{\overline{G_E}}}"', from=2-2, to=1-3]
\end{tikzcd}\]
    (This diagram appears right before \cref{cor:decomposition-of-frob}). Taking $\Spec$, we get:
    \[\begin{tikzcd}[column sep=tiny]
	{\GG[A]} && {\Mon(A,\GG)} \\
	& {\Level(A,\GG)}
	\arrow[hook', from=1-3, to=1-1]
	\arrow["{{(\overline{G_E})^{*}}}", from=1-3, to=2-2]
	\arrow[hook', from=2-2, to=1-1]
\end{tikzcd}\]

    Which imply that there is an open embedding $\Mon(A,\GG) \to  \Level(A,\GG)$ which corresponds to the map of rings $\overline{G_E}$. 
\end{proof}

We recall that $X = \Spf(E_0)$. 
If $Y$ is an $X$-scheme, we will denote by $Y_{\QQ}$ the \emph{rationalization} of $Y$ -- the base change of $Y$ to $X_{\QQ} := X \times \Spec(\QQ) = \Spf(E_0 \otimes \QQ)$. 

\begin{prop} \label{prop:iso-after-rationalization}
    Let $A$ be an abelian $p$-group, and $\GG$ be the formal group associated to Morava $E$-theory.
    The inclusion $\Mon(A,\GG) \to \Level(A,\GG)$ becomes an isomorphism after rationalization.
\end{prop}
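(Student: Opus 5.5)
We will show that, after inverting $p$, every level structure is automatically a monomorphism, i.e.\ the open embedding $\Mon(A,\GG)_{\QQ} \to \Level(A,\GG)_{\QQ}$ is surjective on points and hence an isomorphism. Since $\Mon(A,\GG)$ is the open subscheme of $\GG[A]$ cut out by inverting the finitely many elements $x(\phi(a))$, $a \neq 0$, it suffices to prove the following. For every $X_{\QQ}$-scheme $Y$ (equivalently, every ring $S$ over $E_0 \otimes \QQ$) and every level structure $\phi \colon A \to \Gamma(Y,\GG[p^M])$, each $x(\phi(a))$ with $a \ne 0$ is a unit in $\scO_Y$. Indeed, the condition that these elements become units after base change to $\scO_{\Level(A,\GG)_\QQ}$ says exactly that the localization map $\scO_{\Level(A,\GG)_\QQ} \to \scO_{\Mon(A,\GG)_\QQ}$ is an isomorphism.

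First reduce to elements of order $p$. If $a \ne 0$, choose $r$ with $b = p^r a \in A[p]\setminus\{0\}$. Then $x(\phi(b)) = x([p^r]\phi(a))$, and $[p^r](x)$ is divisible by $x$ in $E_0[x]/(P_M(x))$, since the series $[p^r](x)$ has no constant term. So if $x(\phi(b))$ is a unit, so is $x(\phi(a))$. It therefore suffices to treat $a \in A[p]$, $a \neq 0$.

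The key computation takes place over $E_0 \otimes \QQ$, using the description $P(x) = u\,p\,x + O(x^2)$ with $u \in E_0^\times$ (the claim proved above). Rationally $p$ is invertible, so $x \mid P(x)$ with cofactor $P(x)/x$ having unit constant term. Hence $\GG[p]_\QQ$ splits as $\{0\} \sqcup \GG[p]^{\times}_\QQ$, where $\{0\}$ is the zero section and $x$ is invertible on the complementary piece. Geometrically, $P$ has a simple root at $0$ because its derivative there is the unit $up$.

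Now use the level condition $\sum_{a\in A[p]} [x(\phi(a))] \le \GG[p]\times_X Y$ in $\AAA^1_Y$. The summand for $a=0$ is the divisor $[0]$. The inequality $[0] + [x(\phi(a))] \le [P]$ then means $(t)(t - x(\phi(a)))$ divides $P(t)$ in $\scO_Y[t]$. Evaluating the quotient $P(t)/t$ at $t = x(\phi(a))$ gives $0$, so $x(\phi(a))$ is a root of $Q(t) := P(t)/t$. Since $Q(0) = up$ is a unit in $\scO_Y$, we can write $Q(t) = up + t\,R(t)$. Then $up = -x(\phi(a))R(x(\phi(a)))$, which exhibits $x(\phi(a))$ as a unit.

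The main obstacle is justifying that the divisor inequality for the sum $\sum[x(\phi(a))]$ implies the pairwise inequality $[0]+[x(\phi(a))] \le [P]$, i.e.\ that divisibility by a product of linear factors passes to sub-products. This is fine: if $\prod_b (t - x_b)$ divides $P$, then so does any subproduct, since $t$-monic factors divide a product containing them. Care is needed only in the formal/affine bookkeeping, namely that working with $\AAA^1$ and $\Spec_X$ rather than with $\GG$ makes the points non-nilpotent-friendly; this is exactly why the extended definitions were introduced. Finally, one checks that $x(\phi(0)) = 0$, since $\phi$ is a homomorphism and the identity of $\GG[p^M]$ is $x=0$.
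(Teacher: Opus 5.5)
Your proposal is correct and follows essentially the same route as the paper. You reduce to $Y$-points over a rational base, pass from arbitrary $a \neq 0$ to $a \in A[p]$ because $x(\phi(p^r a))$ is a multiple of $x(\phi(a))$, and then use $t\,(t - x(\phi(a))) \mid P(t) = upt + O(t^2)$ to show that $x(\phi(a))$ divides the unit $up$. Two small differences: your reduction step only needs that $[p^r](x)$ has no constant term, rather than the factorization $[p^k] = u_k P_k$, and evaluating $P(t)/t$ at $t = x(\phi(a))$ makes the paper's ``leading term divides leading term'' step precise.
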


\begin{proof}

    % First note that $(\Mon(A,\GG[p^M]))_{\QQ} = \Mon(A,(\GG[p^M])_{\QQ})$. This is because for any rational test scheme $Y$, $\Mon(A,\GG[p^M])(Y)$ classifies maps $A \to \Gamma(Y, \GG[p^M]) = \Gamma(Y, (\GG[p^m])_{\QQ})$ which are injective everywhere. 

    % Similarly, $(\Level(A,\GG[p^M]))_{\QQ} = \Level(A,(\GG[p^M])_{\QQ})$. Again, if $Y = \Spec_X(S)$ is rational, i.e., $S$ is a $E_0 \otimes \QQ$-algebra, then the condition on the divisors (\ref{eq:divisor-inequality}) happens in $\GG[p] \times_X Y = \GG[p]_{\QQ} \otimes $, which is rational if $Y$ is rational). 

    Let $Y = \Spec_X(S)$ be a rational test scheme, where $X = \Spf(E_0)$. That is, $S$ is an ($E_0 \otimes \QQ$)-algebra. We want to show that 
    \[
    \Mon(A,\GG)(Y) \to \Level(A,\GG)(Y)
    \]
    is bijective. It is trivially injective  (both sides classify maps $A \to \Gamma(Y,\GG[p^M])$), and we now show surjectivity.
    Suppose that $\phi \colon A \to \Gamma(Y,\GG[p^M])$ is a level structure. We claim that to check that $x(\phi(a)) \in \scO_Y = S$ is invertible for all $a \in A$, it is enough to check that $x(\phi(a))$ is invertible for $a \in A[p]$, where $x(\phi(a))$ is defined in \cref{def:x-of-point}.
    
    This is because for any $a \in A$, for some integer $k$, we have $0 \ne p^ka \in A[p]$, and 
    \[
    \begin{split}
        x(\phi(p^ka)) & = [p^k](x(\phi(a))) = u_k(x(\phi(a))P_k(x(\phi(a))) \\
        & = u_k(x(\phi(a)) \cdot \left( u \cdot p^k x (\phi(a)) + O(x(\phi(a)) ^ 2)\right) \\ 
        & = u_k(x(\phi(a)) \cdot u \cdot p^k \cdot x (\phi(a)) \cdot\left( 1 +  O(x(\phi(a)))\right)
    \end{split}
    \] 
    Where $u \in (E_0)^{\times}$.
    Here, $[p^k]$ is the $p^k$-series inside $\GG[p^M]$.
    Note that $u_k(x)$ was defined to be a power series, not a polynomial, but here we mean the image of $u_k(x)$ in $E_0[\![x]\!]/([p^M](x)) \simeq E_0[x]/(P_M(x))$. 

    If we show that $x(\phi(p^ka))$ is invertible in $S$, it will follow that $x(\phi(a))$ is invertible as well, as the former is a multiple of the latter.
\\\\
    Thus, let $a \in A[p]$. Since $[\phi A[p]] \le \GG[p]$, we have:
    \[
    x ( x - x(\phi(a)) ) \mid f_{[\phi A[p]]} \mid f_{\GG[p]}(x) = P(x) = u\cdot px + O(x^2) \in E_0[x]
    \]
    Where $u\in (E_0)^{\times}$. As $x$ is a non-zero divisor, this implies that $x - x(\phi(a)) \mid up + O(x)$. Thus the leading term $-x(\phi(a))$ must divide the leading term $up$.
    Since $u p$ is invertible, $x(\phi(a))$ is invertible as well.
\end{proof}

\begin{thm} \label{thm:injection-to-tate}
Let $R = R' \cotimes E$, where $R'$ is a $p$-adically flat ring spectrum. Let $A$ be an abelian $p$-group, of order $p^m$ and rank at most $n$ (i.e., it is a direct product of at most $n$ cyclic groups).

Then the canonical homomorphism:
    \[
    \overline{G_R} \colon \left(R^0(BA)/(I_{tr})\right)^{\free} \to \pi_0(R^{\tau A})
    \]
    is injective.
\end{thm}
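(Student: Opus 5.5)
First reduce to $R=E$. The construction of $\overline{G_R}$ is compatible with the base change from $E$ to $R=R'\cotimes E$. By \eqref{eq:kunneth-plus-transfer}, together with its analogue for $BA$ and the Künneth formula, we have $R^0(BA)/(I_{tr}) = \pi_0R'\cotimes E^0(BA)/(I_{tr})$. Since $\pi_0R'$ is $p$-adically flat over $\ZZ_p$, tensoring with it commutes with $(-)^{\free}$. On the target, \cref{prop:pi-0-of-proper-Tate} shows that $\pi_0(R^{\tau A})$ is the localization of $R^0(BA)$ at the same Euler-class elements, which come from $E^0(BA)$; hence $\pi_0(R^{\tau A}) = \pi_0R'\cotimes \pi_0(E^{\tau A})$, up to the appropriate completion. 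By \cref{cor:lax-of-F} and naturality, $\overline{G_R}=\Id_{\pi_0R'}\cotimes \overline{G_E}$. Flatness of $\pi_0R'$ then gives injectivity of $\overline{G_R}$ from injectivity of $\overline{G_E}$. The completions need care here: the target is finite over $E_0$ only after localization. I would handle this by noting that everything is a finitely generated module over $E^0(BA)$, which is finite over $E_0$, so the completed tensor product equals the ordinary one on the source. For the target, I would use that $\pi_0R'\cotimes(-)$ is exact on $p$-torsion-free modules, since $\pi_0R'$ is a completed free module and so is a limit of free modules.

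For $R=E$, replace $A$ by $A^*$; the two are abstractly isomorphic. Then $\overline{G_E}$ is, by the last proposition, the ring map $\scO_{\Level(A,\GG)}\to\scO_{\Mon(A,\GG)}$ corresponding to the open embedding. By \cref{prop:topology-of-level}, the source $\scO_{\Level(A,\GG)}=(E^0(BA^*)/(I_{tr}))^{\free}$ is $p$-torsion free. Moreover, since $E_0$ is a domain with $p$ in its maximal ideal and $\scO_{\Level}$ is finite over $E_0$, this module embeds into its rationalization $\scO_{\Level}\otimes\QQ$. Now apply \cref{prop:iso-after-rationalization}: $\overline{G_E}\otimes\QQ$ is an isomorphism $\scO_{\Level}\otimes \QQ\to\scO_{\Mon}\otimes\QQ$. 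Consider the commutative square with $\scO_{\Level}\hookrightarrow\scO_{\Level}\otimes\QQ$ on one side and $\scO_{\Mon}\to\scO_{\Mon}\otimes\QQ$ on the other. The composite $\scO_{\Level}\to\scO_{\Mon}\to\scO_{\Mon}\otimes\QQ$ equals the injective map $\scO_{\Level}\to\scO_{\Level}\otimes\QQ\xrightarrow{\sim}\scO_{\Mon}\otimes\QQ$, so $\overline{G_E}$ is injective.

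The main obstacle is making the rationalization step rigorous. \cref{prop:iso-after-rationalization} is phrased via functors of points on rational test schemes. I need the statement on rings: $\scO_{\Level}\otimes\QQ\to\scO_{\Mon}\otimes\QQ$ is an isomorphism of rings, where both schemes are affine over $X_\QQ$. This follows by Yoneda, provided the rationalized schemes are genuinely $\Spec$ of those tensored rings, which is fine since everything is $\Spec_X$ of an algebra. The rank hypothesis (at most $n$) is where I expect it to enter. It guarantees that $\Level(A,\GG)$ is nonempty and that $(E^0(BA^*)/I_{tr})^{\free}$ is a nonzero finite free $E_0$-module (Strickland), so torsion-freeness and the embedding into the rationalization are clean. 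For rank greater than $n$ the source is zero and the statement is vacuous anyway. Finally, I would double-check that the identification $\pi_0(E^{\tau A^*})=\scO_{\Mon}$ in \cref{prop:topology-of-mon} uses the honest localization with no completion, so the map $\scO_{\Mon}\to\scO_{\Mon}\otimes\QQ$ behaves as claimed.
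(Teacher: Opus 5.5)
Your overall route matches the paper's. You reduce to $R=E$ by base change along $E_0\to M:=\pi_0R'\cotimes E_0$. You then get injectivity of $\overline{G_E}$ from $p$-torsion-freeness of the source together with \cref{prop:iso-after-rationalization}. Your second step is correct, and more explicit than the paper's one line: the square comparing with rationalizations, and Yoneda to pass from points to rings. It only uses that the source is $p$-torsion free; the rank hypothesis plays no role.

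The first step, however, does not work as you set it up. No completion may be applied on the target, and ``$\pi_0R'\cotimes(-)$ is exact on $p$-torsion-free modules'' is the wrong tool.

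\emph{Why completing the target fails.} The ring $\pi_0(E^{\tau A})$ is not finite over $E_0$, and in fact $\frakm\,\pi_0(E^{\tau A})=\pi_0(E^{\tau A})$ for $A\neq 0$. This is visible already at height $1$ with $A=C_p$:
\begin{itemize}
\item $E^0(BC_p)=\ZZ_p[x]/(x\Phi_p(1+x))$, and $\overline{G_E}$ is the inclusion $\ZZ_p[\zeta_p]\hookrightarrow\QQ_p(\zeta_p)=\pi_0(E^{\tau C_p})$.
\item For $R'=\pcs$, any completion of $\pi_0R'\otimes\pi_0(E^{\tau C_p})$ is $0$. Here $\frakm=(p)$, so the $p$-adic and $\frakm$-adic completions agree.
\item So your ``$\Id\cotimes\overline{G_E}$'' would be the zero map out of $\ZZ_p[\zeta_p]$, and its target is not $\pi_0(R^{\tau A})=\QQ_p(\zeta_p)$.
\end{itemize}
Exactness of completion on $p$-torsion-free modules cannot rescue this. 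The cokernel of $\overline{G_E}$ is $p$-power torsion, since it vanishes rationally.

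\emph{The correct bookkeeping.} This is what the paper means by ``complete first and tensor with $\pi_0(E^{\tau A})$ second.''
\begin{itemize}
\item $E^0(BA)$ is finite free over $E_0$, so $R^0(BA)=M\otimes_{E_0}E^0(BA)$ with an ordinary tensor product.
\item The Euler-class factors $S$ come from $E^0(BA)$, so $\pi_0(R^{\tau A})=S^{-1}R^0(BA)=M\otimes_{E_0}\pi_0(E^{\tau A})$, again uncompleted.
\item Likewise the source is $M\otimes_{E_0}(E^0(BA)/(I_{tr}))^{\free}$.
\item Hence $\overline{G_R}=M\otimes_{E_0}\overline{G_E}$.
\end{itemize}
Injectivity then follows from flatness of $M$ over $E_0$, which holds because $M$ is an $\frakm$-adic completion of a free module over the Noetherian ring $E_0$. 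Flatness of $\pi_0R'$ over $\ZZ_p$ alone is not what is needed. With this replacement, your argument is the paper's.
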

\begin{proof}
    Since we know $R$ has the form $R' \cotimes E$, where $R'$ is $p$-adically flat, we have:
    \[
    \left(R^0(BA)/(I_{tr})\right)^{\free} = (\pi_0(R') \cotimes E_0) \otimes_{E_0} \left(E^0(BA)/(I_{tr})\right)^{\free} 
    \]
    And similarly, by direct calculation (using \cref{thm:tate-of-complex-oriented}):
    \[
    \pi_0(R^{\tau A}) = (\pi_0 (R') \cotimes E_0) \otimes_{E_0} \pi_0(E^{\tau A})
    \]
    (here it is important to complete first and tensor with $\pi_0(E^{\tau A})$ second!). Since $\overline{G_R}$ is a localization at the Euler class, direct computation shows the factor $(\pi_0(R') \cotimes E_0)$ is unchanged by the map $\overline{G_R}$, i.e. we have:
    \[
    \overline{G_R} = (\pi_0(R') \cotimes E_0) \otimes_{E_0} \overline{G_E}
    \]
    Since $\pi_0(R') \cotimes E_0$ is flat over $E_0$ (in the classical algebraic sense), as $\pi_0(R')$ is $p$-adically flat, it is enough to check injectivity of the map 
    \[
    \overline{G_E} \colon \left(E^0(BA)/(I_{tr})\right)^{\free} \to \pi_0(E^{\tau A})
    \]
    Since both rings are $p$-torsion free, it is enough to show that this is true after rationalization. But after rationlization, this map corresponds to the isomorphism $\Mon(A^*,G)_{\QQ} \xrightarrow[]{\simeq} \Level(A^*,G)_{\QQ}$ of \cref{prop:iso-after-rationalization}, in particular the map of rings is injective. 
\end{proof}

% We also get a corollary:
% \begin{cor}
%     Let $R = R' \cotimes E$, where $R'$ is $p$-adically flat. Then the canoncial map:
%     \[
%     R^0(B\Sigma_{p^m}
%     )/(I_{tr}) \to \pi_0(R^{\tau\Sigma_{p^m}})
%     \]
%     is injective.
% \end{cor}
% \begin{proof}
%     We have the following commutative diagram:
%     \[\begin{tikzcd}
% 	{R^0(B \Sigma_{p^m})/(I_{tr})} & {\bigoplus_{A \subseteq \Sigma^{p^m}}(R^0(BA)/(I_{tr})})^{\free} \\
% 	{\pi_0(R^{\tau \Sigma_{p^m}})} & {\bigoplus_{A \subseteq \Sigma^{p^m}}\pi_0(R^{\tau A})}
% 	\arrow[from=1-1, to=1-2]
% 	\arrow[from=1-1, to=2-1]
% 	\arrow[from=1-2, to=2-2]
% 	\arrow[from=2-1, to=2-2]
% \end{tikzcd}\]
%     Where the sums range over abelian transitive $A \subseteq \Sigma_{p^m}$. 
%     It is clear this diagram commutes without quotient by the transfer ideal, and thus it also commutes after the quotient. 

%     The top map in injective by \cref{cor:injective-to-abelian}, and the right map is injective by \cref{thm:injection-to-tate}. Hence the map on the left is also injective, as desired. 
% \end{proof}

Finally, we return to proving \cref{thm:injective-to-abelian}:
\begin{proof}[Proof of \cref{thm:injective-to-abelian}]
    By \cite[Theorem 9.2]{Strickland98}, we know that $\spec( E^0(B\Sigma_{p^m})/(I_{tr
    })) =\Sub_{m}(\GG) $, which is the scheme that classifies subgroups of $\GG$ of order $p^m$.

   In \cite{Strickland97}, Strickland introduces a closed subscheme for an abelian $p$-group $A$:
    \[
    \Type(A,\GG) := \Level(A,\GG) / \text{Aut}(A)
    \]
    By \cite[Theorem 7.4]{Strickland97}, given a level structure $\phi \colon A \to \Gamma(Y,\GG[p^M])$, then $[\phi A]$ is a subgroup of $\Gamma(Y,\GG[p^M])$. This induces a map $\Level(A,\GG) \to \Sub_m(\GG)$ (the subgroup $[\phi A]$ is the image of $\phi$). This morphism corresponds to the restriction:
    \[
    E^0(B\Sigma_{p^m})/(I_{tr}) \to \left( E^0(BA^*)/(I_{tr}) \right)^{\free}
    \]
    This was proven in \cite[Proposition 8.3]{HS20} in more general settings. 
    \\ \\ 
    Clearly we have $[\phi A] = [(\phi \circ f) A]$, for $f \in \Aut(A)$, thus this map descends to an evident map $\Type(A,\GG) \to \text{Sub}_{m}(\GG)$.

    Letting $A$ run over abelian $p$-subgroups of $\Sigma_{p^m}$ of order $p^m$ and rank at most $n$, we obtain a morphism
    \[
    \prod_{A} \Type(A,\GG) \to \text{Sub}_{m}(\GG)
    \]
    which, in \cite[Theorem 12.4]{Strickland97}, is shown to induce an injection on rings. The quotient map $\Level(A,\GG) \to \Type(A,\GG)$ is always induces injections on rings: it is the inclusion of the ring of fixed elements in the ambient ring. Hence the composition
    \[
    \prod_{A} \Level(A,\GG) \to \prod_{A} \Type(A,\GG) \to \text{Sub}_{m}(\GG)
    \]
    is injective on rings, which is the injection that we need. 
\end{proof}

\subsection{Completing the Proof} \label{subsection:putting-it-all-together}

Fix some abelian $p$-group $A$, and for a ring spectrum $R$, let us fix the notation $\phi_R$ for the Frobenius map $R \to R^{\tau A}$.

Let $R$ be $p$-adically flat and $A$ an abelian $p$-group. Our goal in this chapter is to complete the proof that diagram (\ref{diag:main-goal}) commutes, by finally showing the premise of \cref{cor:enough-to-show-two-are-equal-A} holds for $R\cotimes E$. That is, we show:
\begin{prop} \label{prop:power-agrees-with-frob}
    For $R$ a $p$-adically flat ring spectrum, the two maps
    \[
        \overline{P_A},\;\phi^m \cotimes t[A] \colon \pi_0(R \cotimes E) \to \left( (R \cotimes E)^0(BA)/(I_{tr}) \right)^{\free}
    \]
    agree.
\end{prop}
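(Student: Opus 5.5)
Write $S = R \cotimes E$, where $R$ is $p$-adically flat. The plan is to detect both maps after composing with the injection
\[
\overline{G_S} \colon \left(S^0(BA)/(I_{tr})\right)^{\free} \to \pi_0(S^{\tau A})
\]
of \cref{thm:injection-to-tate}; here $A$ is abelian and transitive in $\Sigma_{p^m}$, so it has rank at most $n$ once we restrict to the subgroups used in \cref{thm:injective-to-abelian}. By \cref{cor:decomposition-of-frob}, $\overline{G_S}\circ\overline{P_A} = \pi_0\phi^A_S$. It therefore suffices to show
\[
\pi_0\phi^A_S = \overline{G_S}\circ(\phi^m \cotimes t[A]).
\]
Both sides are ring maps and continuous for the $\frakm_{E_0}$-adic topology. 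Since $\pi_0 R \otimes E_0 \to \pi_0 S$ has dense image, it suffices to check equality on elements $r\otimes e$, and hence, by multiplicativity, separately on $\pi_0 R$ and on $E_0$.

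\textbf{Step 1: multiplicativity of $\phi^A$.} By \cref{cor:frob-is-lax}, $\phi^A$ is lax symmetric monoidal. So on $\pi_0$, the Frobenius of $S$ restricted along the unit maps $R \to S$ and $E \to S$ is the composite
\[
\pi_0 R \to \pi_0(R^{\tau A}) \to \pi_0(S^{\tau A}), \qquad E_0 \to \pi_0(E^{\tau A}) \to \pi_0(S^{\tau A}),
\]
and $\pi_0\phi^A_S(r\otimes e)$ is the product of the two images. On the other side, $\overline{G_S}\circ(\phi^m\cotimes t[A])(r\otimes e)$ equals $\can_R(\phi^m r)\cdot \overline{G_E}(t[A](e))$, pushed into $\pi_0(S^{\tau A})$. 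This uses \cref{cor:lax-of-F}, which expresses $\overline{G_{R\otimes E}}$ on $\pi_0 R\otimes L[A]$ as $\can_R\otimes\overline{G_E}$, together with naturality of $\overline{G}$ along $R\otimes E \to S$.

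\textbf{Step 2: the $E$-factor.} For $e\in E_0$ we need $\pi_0\phi^A_E(e) = \overline{G_E}(t[A](e))$. This is \cref{cor:decomposition-of-frob} applied to $E$ (the case $R'=\pcs$), since $t[A]$ is by definition the reduced power operation $\overline{P_A}$ of $E$.

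\textbf{Step 3: the $R$-factor.} For $r\in\pi_0R$ we need $\pi_0\phi^A_R(r) = \can^A_R(\phi^m(r))$ in $\pi_0(R^{\tau A})$. This is exactly \cref{prop:composition-of-frobeni}, with $|A|=p^m$ and $\phi$ the lift of Frobenius coming from the Tate-valued Frobenius. Mapping further into $\pi_0(S^{\tau A})$ preserves the equality.

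I expect the main obstacle to be the bookkeeping in Step 1. One has to check that the lax structure map $\pi_0(R^{\tau A})\otimes\pi_0(E^{\tau A}) \to \pi_0(S^{\tau A})$ is compatible both with the identification $(S^0(BA)/I_{tr})^{\free} = \pi_0R\cotimes L[A]$ from (\ref{eq:kunneth-plus-transfer}) and with \cref{cor:lax-of-F}. In particular, the element $r\otimes 1$ must map to $\can_R(r)$ under $\overline{G_S}$. This holds because, for trivial action, $\pi_0 R \to S^0(BA) \to \pi_0(S^{\tau A})$ is the canonical map. Once this compatibility is checked, continuity and density finish the argument.
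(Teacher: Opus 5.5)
Your route is the paper's own. You use the injectivity of $\overline{G_S}$ (\cref{thm:injection-to-tate}) and the factorization $\overline{G_S}\circ\overline{P_A}=\pi_0\phi^A_S$ (\cref{cor:decomposition-of-frob}). You then use naturality and multiplicativity of the Tate-valued Frobenius and of $\overline{G}$, \cref{cor:decomposition-of-frob} for $E$, and \cref{prop:composition-of-frobeni} for $R$. The one difference is minor. You split $r\otimes e$ into $r\otimes 1$ and $1\otimes e$ by multiplicativity and naturality along $R\to S$ and $E\to S$. The paper instead pushes $r\otimes e$ through the lax structure map $\pi_0(R^{\tau A})\otimes\pi_0(E^{\tau A})\to\pi_0(S^{\tau A})$ using \cref{cor:lax-of-F}. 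Your version is a harmless simplification.

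One step, however, fails in the order you wrote it. You apply continuity and density to the two composites $\pi_0 S\to\pi_0(S^{\tau A})$. But the $\frakm_{E_0}$-adic topology on $\pi_0(S^{\tau A})$ is indiscrete when $A\neq 1$. To see this, take a nonzero $p$-torsion coordinate $x_a$, as in the proof of \cref{prop:iso-after-rationalization}. It is a unit in $\pi_0(E^{\tau A})$ and a root of the distinguished polynomial $P$, whose non-leading coefficients lie in $\frakm_{E_0}$. Dividing $P(x_a)=0$ by $x_a^{p^n}$ gives $1\in\frakm_{E_0}\,\pi_0(E^{\tau A})$. For example, at height $1$ with $A=C_p$ this ring is $\QQ_p(\zeta_p)$. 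So every map into it is continuous, and agreement on the dense image of $\pi_0R\otimes E_0$ proves nothing.

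The repair is to reverse the order, as the paper does.
\begin{itemize}
\item[(1)] Apply continuity and density to $\overline{P_A}$ and $\phi^m\cotimes t[A]$ as maps into $(S^0(BA)/(I_{tr}))^{\free}\cong\pi_0R\cotimes L[A]$. This ring is $\frakm_{E_0}$-adically complete, in particular separated. Continuity of $\overline{P_A}$ follows from $\overline{P_A}(es)=t[A](e)\,\overline{P_A}(s)$ and continuity of $t[A]$.
\item[(2)] Use that both maps are ring maps to reduce to the elements $r\otimes 1$ and $1\otimes e$. For $\overline{P_A}$ this follows from $\overline{G_S}\circ\overline{P_A}=\pi_0\phi^A_S$ together with injectivity of the ring map $\overline{G_S}$.
\item[(3)] Only then compose with the injective $\overline{G_S}$ and check equality on these elements. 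This is exactly what your Steps 1--3 do.
\end{itemize}
With this reordering your argument is complete.
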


\begin{proof}
    
In \cref{thm:injection-to-tate} we saw that the map 
\[
\overline{G_{R \cotimes E}} \colon \left( (R \cotimes E)^0(BA)/(I_{tr})\right)^{\free} \to \pi_0((R \cotimes E)^{\tau A})
\]
is injective. Thus to prove \cref{prop:power-agrees-with-frob}, we can show the two maps agree after composing with the map $\overline{G_{R \cotimes E}}$. Further, the canonical map $\pi_0(R) \otimes \pi_0(E) \to \pi_0(R \cotimes E)$ is the completion at the maximal ideal of $E_0$, by \cref{cor:pi-0-of-flat-tensor-E}. In particular, since our maps are maps of complete local rings, they agree if and only if they agree on $\pi_0(R) \otimes \pi_0(E)$. 
That is, it suffices to show the following diagram commutes:
\begin{equation} \label{diag:after-pre-and-post}
\begin{tikzcd}[row sep = tiny]
	& {\pi_0(R \cotimes E)} & {\left( (R \cotimes E)^0(BA)/(I_{tr}) \right)^{\free}} \\
	{\pi_0(R) \otimes \pi_0(E)} &&& {\pi_0((R \cotimes E)^{\tau A})} \\
	& {\pi_0(R \cotimes E)} & {\left( (R \cotimes E)^0(BA)/(I_{tr}) \right)^{\free}}
	\arrow["{\overline{P_A}}", from=1-2, to=1-3]
	\arrow[from=1-3, to=2-4]
	\arrow[from=2-1, to=1-2]
	\arrow[from=2-1, to=3-2]
	\arrow["{\phi^m \cotimes t[A]}", from=3-2, to=3-3]
	\arrow[from=3-3, to=2-4]
\end{tikzcd}
\end{equation}
The composition:
\[
    \pi_0(R \cotimes E) \xrightarrow[]{\,\overline{P_A}\,} \left( (R \cotimes E)^0(BA)/(I_{tr}) \right)^{\free} \xrightarrow[]{\, \overline{G_{R \cotimes E}} \,} \pi_0((R \cotimes E) ^{\tau A})
\]
is the Frobenius map $\phi_{R \cotimes E}$, by \cref{cor:decomposition-of-frob}. Note the following diagram, which commutes since the Tate-valued Frobenius is natural and lax symmetric monoidal:
\[\begin{tikzcd}[column sep = large]
	{\pi_0(R \cotimes E)} & {\pi_0((R \cotimes E)^{\tau A})} \\
	{\pi_0(R \otimes E)} & {\pi_0((R \otimes E)^{\tau A})} \\
	{\pi_0(R) \otimes \pi_0(E)} & {\pi_0(R^{\tau A}) \otimes \pi_0(E^{\tau A})}
	\arrow["{{{\phi_{R \cotimes E}}}}", from=1-1, to=1-2]
	\arrow[from=2-1, to=1-1]
	\arrow["{\phi_{R \otimes E}}", from=2-1, to=2-2]
	\arrow[from=2-2, to=1-2]
	\arrow[from=3-1, to=2-1]
	\arrow["{{{\phi_{R} \otimes \phi_{E}}}}", from=3-1, to=3-2]
	\arrow[from=3-2, to=2-2]
\end{tikzcd}\]
Hence, in diagram~(\ref{diag:after-pre-and-post}), the composite along the upper path is the map
\[
\pi_0(R) \otimes \pi_0(E) \xrightarrow[]{\phi_R \otimes \phi_E} \pi_0(R^{\tau A}) \otimes \pi_0(E^{\tau A}) \to \pi_0((R \cotimes E)^{\tau A})
\]
\\ 
We now turn to the lower path. Consider the following commutative diagram:
\[\begin{tikzcd}[column sep=large]
	{\pi_0(R \cotimes E)} & {(\pi_0((R \cotimes E)^{hA})/(I_{tr}))^{\free}} & {\pi_0((R\cotimes E)^{\tau A})} \\
	& {(\pi_0((R \otimes E)^{hA})/(I_{tr}))^{\free}} & {\pi_0((R\otimes E)^{\tau A})^{\free}} & {\pi_0((R\otimes E)^{\tau A})} \\
	{\pi_0(R) \otimes \pi_0(E)} & {\pi_0(R) \otimes \left( \pi_0(E^{hA})/(I_{tr}) \right)^{\free}} & {\pi_0(R^{\tau A}) \otimes \pi_0(E^{\tau A})}
	\arrow["{{{{{\phi^m \cotimes t[A]}}}}}", from=1-1, to=1-2]
	\arrow["{{{{\overline{G_{R \cotimes E}}}}}}", from=1-2, to=1-3]
	\arrow[from=2-2, to=1-2]
	\arrow["{{{{\overline{G_{R \otimes E}}}}}}", from=2-2, to=2-3]
	\arrow[from=2-3, to=1-3]
	\arrow[from=2-4, to=1-3]
	\arrow[from=2-4, to=2-3]
	\arrow[from=3-1, to=1-1]
	\arrow["{{{{{\phi^m \otimes t[A]}}}}}"', from=3-1, to=3-2]
	\arrow[from=3-2, to=2-2]
	\arrow["{\can_R \otimes \overline{G_E}}"', from=3-2, to=3-3]
	\arrow[from=3-3, to=2-3]
	\arrow[from=3-3, to=2-4]
\end{tikzcd}\]
The left rectangle commutes because both vertical maps are completions with respect to the maximal ideal of $E_0$, by \cref{cor:pi-0-of-flat-tensor-E} and equation (\ref{eq:kunneth-plus-transfer}).

The middle upper square commutes as a consequence of the naturality of the map $\overline{G_R}$, as noted in \cref{def:the-map-F}. The middle lower square commutes by \cref{cor:lax-of-F}. 

Finally the triangles on the right all commute by functoriality of $(-)^{\free}$, in addition to the fact that $\pi_0(R^{\tau A})$, $\pi_0(E^{\tau A})$ and $\pi_0((R \cotimes E)^{\tau A})$ are all $p$-torsion free. 

Thus the lower path in diagram~(\ref{diag:after-pre-and-post}) is given by:

\begin{equation} \label{eq:lower-path}
\pi_0(R) \otimes \pi_0(E)  \xrightarrow[]{(\can_R \;\circ\; \phi^m) \,\otimes\, (\overline{G_E}\;\circ\; t[A])} \pi_0(R^{\tau A}) \otimes \pi_0(E^{\tau A}) \xrightarrow[]{ } \pi_0((R \cotimes E)^{\tau A})
\end{equation}
By \cref{cor:decomposition-of-frob},  $\overline{G_E} \,\circ\, t[A] = \phi_E$.
Moreover, by \cref{prop:composition-of-frobeni} to deduce that $\can_R \,\circ\, \phi^m = \phi_R$. 

Thus the morphism in (\ref{eq:lower-path}) identifies with the composite of the map $\pi_0(R^{\tau A}) \otimes \pi_0(E^{\tau A}) \to \pi_0((R \cotimes E)^{\tau A})$ with the morphism $\phi_R \otimes \phi_E$. This agrees with the composite along the upper path, completing the proof.

\end{proof}

\bibliographystyle{alpha}
\bibliography{./Bibl}

\end{document}